\numberwithin{equation}{section}
\begin{document}

\newcommand{\Z}{\mathcal{Z}}
\newcommand{\B}{\mathcal{B}}
\newcommand{\A}{\mathcal{A}}
\newcommand{\E}{\mathcal{E}}
\newcommand{\G}{\mathcal{G}}
\newcommand{\Q}{\mathcal{Q}}
\newcommand{\zlj}{Z_{\ell, j}}
\newcommand{\Lp}{\mathcal{L}}
\newcommand{\ppi}{\mathcal{P}_\pi}
\newcommand{\HDelta}{\hat{\Delta}}
\newcommand{\hDelta}{\hat{\Delta}}
\newcommand{\F}{\hat{F}}
\newcommand{\T}{\hat{T}}
\newcommand{\I}{\hat{I}}
\newcommand{\X}{\hat{X}}
\newcommand{\V}{\mathcal{V}}
\newcommand{\M}{\mathcal{M}}
\newcommand{\Ho}{\mathcal{H}}
\newcommand{\N}{\mathbb{N}}
\newcommand{\Na}{\mathcal{N}}
\newcommand{\R}{\mathbb{R}}
\newcommand{\ff}{\mathcal{F}}
\newcommand{\ab}{\bar{a}}
\newcommand{\ve}{\varepsilon}
\newcommand{\vf}{\varphi}
\newcommand{\tm}{\tilde{m}}
\newcommand{\tg}{\tilde{g}}
\newcommand{\tf}{\tilde{f}}
\newcommand{\tp}{\tilde{\varphi}}
\newcommand{\tG}{\tilde{G}}
\newcommand{\tH}{\tilde{H}}
\newcommand{\td}{\tilde{d}}
\newcommand{\tx}{\tilde{x}}
\newcommand{\ty}{\tilde{y}}
\newcommand{\tnu}{\tilde{\nu}}
\newcommand{\tmu}{\tilde{\mu}}
\newcommand{\teta}{\tilde{\eta}}
\newcommand{\trho}{\tilde{\rho}}
\newcommand{\accim}{{\em a.c.c.i.m.}}
\newcommand{\re}{\mbox{Re}}
\newcommand{\Arg}{\mbox{Arg}}
\newcommand{\Crit}{\mbox{Crit}}
\newcommand{\sCrit}{\mbox{\scriptsize Crit}}
\newcommand{\mi}{\mbox{\tiny min}}
\newcommand{\ma}{\mbox{\tiny max}}
\newcommand{\hlj}{H_{\ell,j}}

\newtheorem{theorem}{Theorem}[section]
\newtheorem{lemma}[theorem]{Lemma}
\newtheorem{sublemma}[theorem]{Sublemma}
\newtheorem{proposition}[theorem]{Proposition}
\newtheorem{remark}[theorem]{Remark}
\newtheorem{definition}[theorem]{Definition}
\newtheorem{cor}[theorem]{Corollary}
\newtheorem*{thma}{Theorem A}
\newtheorem*{thmb}{Theorem B}

\title{Existence and convergence properties of physical measures for certain
dynamical systems with holes.}
\author{Henk Bruin\thanks{HB was supported in part by
EPSRC grants GR/S91147/01 and EP/F037112/1} 
\and Mark Demers\thanks{MD was supported in
part by EPSRC grant GR/S11862/01 and NSF grant DMS-0801139.} 
\and Ian Melbourne\thanks{IM was supported in part by
EPSRC grant GR/S11862/01 
\newline MD thanks the University of Surrey for an engaging
visit during which this project was started.  In addition, HB would like to thank
Georgia Tech; MD would like to thank the Scuola Normale Superiore, Pisa; MD and IM would
like to thank MSRI, Berkeley, where part of this work was done.}}

\maketitle

\begin{abstract}
We study two classes of dynamical systems with
holes:  expanding maps of the interval and Collet-Eckmann maps
with singularities.
In both cases, we prove that there is a natural absolutely continuous
conditionally invariant measure $\mu$ (\accim) with
the physical property that strictly positive H\"{o}lder continuous
functions converge to the density of $\mu$ under the renormalized dynamics
of the system.  In addition, we construct an invariant measure
$\nu$, supported on the Cantor set of points that never escape
from the system, that is ergodic and enjoys exponential decay of correlations
for H\"{o}lder observables.  We show that $\nu$ satisfies
an equilibrium principle which implies
that the escape rate formula, familiar to the thermodynamic formalism,
holds outside the usual setting.  In particular, it holds for Collet-Eckmann
maps with holes, which are not uniformly hyperbolic and do not admit a
finite Markov partition.

We use a general framework of Young towers with holes
and first prove results about the \accim\ and the invariant measure on the
tower.
Then we show how to transfer results to the original dynamical system.
This approach can be expected to generalize to other dynamical systems than
the two above classes.
\end{abstract}

\section{Introduction}
\label{introduction}

Dynamical systems with holes are examples of systems whose domains are not
invariant under the dynamics.  Important questions in the study
of such open systems include: what is the escape rate from the phase space
with respect to a given reference measure?  Starting with an initial
probability measure $\mu_0$ and letting $\mu_n$ denote the distribution
at time $n$ conditioned on not having escaped, does $\mu_n$ converge to
some limiting distribution independent of $\mu_0$?  Such a measure,
if it exists, is a \emph{conditionally invariant measure}.

These questions have been addressed primarily for uniformly expanding
or hyperbolic systems which admit finite Markov partitions:
expanding maps on $\R^n$ \cite{pianigiani yorke, collet 1, collet 2};
Smale horseshoes \cite{cencova 1, cencova 2};
Anosov diffeomorphisms
\cite{chernov m 1, chernov m 2, chernov mt 1, chernov mt 2};
billiards with convex scatterers satisfying a non-eclipsing condition
\cite{lopes mark, richardson}; and large parameter logistic maps whose critical
point maps out of the interval \cite{homburg young}.

Requirements on Markov partitions have been dropped
for expanding maps of the interval
\cite{baladi keller, chernov bedem, liverani maume, demers exp};
and more recently for piecewise uniformly hyperbolic maps in two
dimensions \cite{demers liverani}.
Nonuniformly
hyperbolic systems have been studied in the form of logistic
maps with generic holes \cite{demers logistic}.
Typically a restriction on the size of the hole is introduced
in order to control the dynamics.

A central object of study in these open
systems is the conditionally invariant
measure mentioned previously.  Given a self-map $\T$ of a measure
space $\X$, we identify a set $H \subset \X$ which we call the \emph{hole}.
Once the
image of a point has entered $H$, we do not allow it to return.
Define $X = \X \backslash H$ and
$T = \T |_{X \cap \T^{-1}X}$. A probability measure
$\mu$ is called \emph{conditionally invariant} if it satisfies
\[
\mu(A) = \frac{\mu(T^{-1}A)}{\mu(T^{-1}X)}
\]
for each Borel $A \subseteq X$.  Iterating this relation and setting
$\lambda = \mu(T^{-1}X)$, we see that $\mu(T^{-n}A) = \lambda^n \mu(A)$.
The number $\lambda$ is called the eigenvalue of $\mu$ and $-\log \lambda$
represents its \emph{exponential rate of escape} from $X$.

If $\mu$ is absolutely continuous with respect to a reference measure $m$,
we call $\mu$ an absolutely continuous conditionally invariant measure
and abbreviate it by \accim

In \cite{demers exp} and \cite{demers logistic}, the author constructed
Young towers to study expanding maps of the interval and
unimodal Misiurewicz maps
with small holes.  The systems were shown to admit an \accim\ with a density unique
in a certain class of densities and converging to the SRB measure of the
closed system as the diameter of the hole tends to zero.  However, left
open in these papers was the question of what class of measures converges to
the \accim\ under the (renormalized) dynamics of $T$.
This question is especially important for open systems since even for
well-behaved hyperbolic systems,
many \accim\ may exist with overlapping supports and arbitrary escape rates
\cite{demers young}.  Thus it is essential to distinguish a natural \accim\
which attracts a reasonable class of measures, including the reference measure.

The purpose of this paper is two-fold.  First, we prove that for a large class of
systems with holes, including
\begin{enumerate}
\item $C^{1+\alpha}$ expanding maps of the interval (see Theorem~\ref{thm:exp convergence}),
and
\item multimodal Collet-Eckmann
maps with singularities (see Theorem~\ref{thm:CE convergence}),
\end{enumerate}
all H\"older continuous densities $f$ which are bounded away from
zero converge exponentially to the \accim\
under the renormalized dynamics of $T$.  To be precise,
if $\Lp$ is the transfer operator associated with $T$ and $|\cdot |_1$
the $L^1(m)$-norm, then $\Lp^nf/|\Lp^nf|_1$ converges exponentially
to the density of $\mu$ as $n \to \infty$.  Although
similar results are known for $C^2$ expanding maps with holes
\cite{chernov bedem, liverani maume}, they are completely new for
multimodal maps, and even for unimodal maps without singularities. In addition,
we strengthen the results on the dynamics of the tower which were used
in \cite{demers exp} and \cite{demers logistic}.

Second, we study the set of nonwandering points of each system:
the (measure zero) set of points, $X^\infty$, which never enter the hole.
We construct
an ergodic invariant probability measure $\tnu$ supported on $X^\infty$ which
enjoys exponential decay of correlations on H\"{o}lder functions.
The measure $\tnu$ is characterized by a physical limit
and satisfies an equilibrium principle.  This implies the generalized
\emph{escape rate formula} for both classes of systems in question,
\[
\log \lambda = h_{\tnu}(T) - \int_X \log JT \, d\tnu
\]
where $\lambda$ represents the exponential rate of escape from $X$ with respect to
the reference measure $\tm$, $h_{\tnu}(T)$ is the metric
entropy of $T$ with respect to $\tnu$, and $JT$ is the Jacobian of $T$
with respect to $\tm$.

This formula is
well-known when the usual thermodynamic formalism applies (in the
presence of a finite Markov partition)
\cite{bowen, cencova 1, chernov m 1, chernov mt 2, collet 1}.  In
\cite{baladi keller}, an equilibrium principle was established for
piecewise expanding maps with generalized potentials of bounded variation.
The paper \cite{bruin keller}
deals with equilibrium states of the unbounded potential
$-t \log|T'|$, $t \approx 1$, for Collet-Eckmann unimodal maps $T$,
using a weighted transfer operator, but not allowing any holes.
Both \cite{baladi keller} and \cite{bruin keller}
use canonical Markov extensions (frequently called
Hofbauer towers).
In Theorem~\ref{thm:T-equilibrium} we generalize those results
to systems with holes having no Markov structure and nonuniform hyperbolicity
by constructing Young towers.
In contrast to previous results, we do not use bounded variation techniques
and so are able to allow potentials which are piecewise H\"{o}lder
continuous.  This answers in the
affirmative a conjecture of Chernov and van dem Bedem regarding expanding
maps with holes \cite{chernov bedem}
and a more general question raised in \cite{demers young}.

\begin{remark}
It is important to note that the Young towers must be constructed for each system
{\em after} the introduction of holes since the presence of holes affects return
times in a possibly unbounded way.
Thus existing tower constructions for the corresponding closed systems cannot
be used directly.  
\end{remark}

Throughout the paper, we emphasize the physical properties of the measures
involved and their characterization as push forward and pull back limits
under the renormalized dynamics.  In particular, the
measures are independent of the Markov extensions used.

In Section~\ref{results}, we formulate our results precisely and include
a brief discussion of the issues involved.  Section~\ref{quasi-compact}
proves the convergence results on the tower while Section~\ref{applications}
applies these results to two classes of concrete systems with holes: expanding
maps of the interval and Collet-Eckmann maps with singularities.
Section~\ref{equilibrium}
contains proofs of the equilibrium principles for both the tower and
the underlying dynamical system.


\section{Setting and Statement of Results}
\label{results}

\subsection{Young Towers}

We recall the definition of a Young tower.  Let $\HDelta_0$ be a measure
space and let $\Z_0$ be a countable measurable
partition of $\HDelta_0$.  Given a finite reference measure $m$ on $\HDelta$, let $R$ be a function on
$\HDelta_0$ which is
constant
on elements of the partition and for which $\int R \, dm < \infty$.  We define the tower over $\HDelta_0$ as
\[
\HDelta = \{ (x, n) \in \Delta_0 \times \N : n < R(x) \},
\]
where $\N = \{ 0,1,2,\dots\}$.
We call $\HDelta_\ell = \HDelta|_{n=\ell}$ the $\ell^{th}$ level of the tower.  The action of the tower map
$\F$ is characterized by
\[
\begin{array}{rll}
\F(x,n) = & (x, n+1) & \mbox{if $n+1 < R(x)$} \\
\F^{R(x)}(\Z(x)) = & \bigcup_{j \in J_x} Z_{j} &
\mbox{for some subset of partition elements of $\Z_0$ indexed by $J_x$}
\end{array}
\]
where $\Z_0(x)$ is the element of $\Z_0$ containing $x$ and
$\F^{R(x)}|_{\Z_0(x)}$ is injective.

We will abuse notation slightly and refer to a point $(x,n)$ in the tower as simply $x$ and $\HDelta_n$ will be
made clear by the context.  Also, the partition $\Z_0$
and the action of $F$ induce a natural partition of $\HDelta$
which we shall refer to by $\Z$, with elements $Z_{\ell, j}$ in $\HDelta_\ell$.
With this convention, it is clear that $\Z$ is a Markov partition for $\F$.
The definition of $R$ extends easily to the entire tower as well: $R(x)$ is simply the first time that
$x$ is mapped to $\HDelta_0$ under $\F$.  We extend $m$ to each level of the tower by setting
$m(A) = m(\F^{-\ell}A)$ for every measurable set $A \subset \HDelta_\ell$.


\subsubsection{Introduction of Holes}
\label{holes}

We define a hole $H$ in $\HDelta$ as the union of countably many elements of the partition $\Z$, i.e.,
$H = \bigcup H_{\ell, j}$ where each $H_{\ell, j} = Z_{\ell, k}$
for some $k$.
Also set $H_\ell = \sum_j H_{\ell,j}=H\cap\HDelta_\ell$.
This preserves the Markov
structure of the returns to $\HDelta_0$, but the definition of the return time function $R$ needs a slight
modification:
if $x$ is mapped into $H$ before it reaches $\HDelta_0$, $R(x)$ is defined to be the time that $x$ is mapped
into $H$; otherwise, $R(x)$ remains unchanged.  If $Z_{\ell, j} \subset H$, then all the elements of $\Z$ directly
above $Z_{\ell, j}$ are deleted since once $\F$ maps a point into $H$, it disappears forever.

We will be interested in studying the dynamics of the points which have not yet fallen into the hole.  To this end,
we define $\Delta = \HDelta \backslash H$ and $\Delta^n = \bigcap_{i = 0}^n \F^{-i} \Delta$, so  $\Delta^n$ is the
set of points which have not fallen into the hole by time $n$.  Define the map $F = \F |_{\Delta^1}$
and its iterates by $F^n = \F^n |_{\Delta^n}$.  We denote by $Z_{\ell, j}^* \subset \Delta$ those elements of $\Z$ for which
$\F(Z) \subset \HDelta_0$. In this
paper we will study the map $F$ and the transfer operator associated with it.

We consider towers with the following properties.
\begin{enumerate}
  \item[{\bf(P1)}]  {\em Exponential returns.}  There exist constants $C>0$ and $\theta < 1$ such that
    $m(\HDelta_n) \leq C\theta^n$.
  \item[{\bf(P2)}]  {\em Generating partition.}  For each $x \neq y \in \Delta$, there
    exists a separation time $s(x,y)<\infty$
such that $s(x,y)$ is the smallest nonnegative integer $k$ such that
    $F^k(x)$ and $F^k(y)$ lie in different elements of $\Z$ or $\F^k(x)$, $\F^k(y) \in H$.
  \item[{\bf(P3)}]  {\em Finite images.}  Let $\Z_0^{im}$ be the partition of $\Delta_0$ generated by the sets
$\{F^RZ\}_{Z \in \Z_0}$.  We require that $\Z_0^{im}$ be a finite partition.
\end{enumerate}

Due to (P3) we define $c_0 := \min_{Z' \in \Z_0^{im}} m(Z')> 0$.

Using property (P2), we define a metric on $\Delta$ by
$d(x,y) = \beta^{s(x,y)}$ for some $\beta\in(\theta,1)$ where
$\theta$ is as in (P1).   (The value of $\beta$ may
be further restricted depending on the underlying dynamical system to
which we wish to apply the tower.)

We say that $(F, \Delta)$ is \emph{transitive} if for each
$Z'_1, Z'_2 \in \Z_0^{im}$, there exists an $n \in \N$ such that
$F^n(Z'_1) \cap Z'_2 \neq 0$.  We say that $F$ is \emph{mixing}
if for each $Z' \in \Z_0^{im}$, there is an $N$ such that
$\Delta_0 \subset F^n(Z')$ for all $n \geq N$.

\begin{remark}
We define mixing in this way because the usual requirement,
$\gcd (R|_{\Delta_0})=1$, made for towers
with a single base (i.e., $\Z_0^{im}$ contains a single element) is not sufficient
to eliminate periodicity in towers with multiple bases.
\end{remark}

Since we may always construct a tower with no holes in the base (by simply
choosing a reference set in the underlying system which does not intersect
the hole), we consider
towers with no holes in $\Delta_0$.  Define
\[
q := \sum_{\ell \geq 1} m(H_\ell) \beta^{-(\ell -1)}.
\]
Our assumption on the size of the hole is,

\medskip
\noindent
\parbox{.1 \textwidth}{\bf(H1)}
\parbox[t]{.8 \textwidth}{ $\displaystyle q < \frac{(1-\beta)c_0}{1+C_1}$ }
\medskip

\noindent
where $C_1$ is the distortion constant of equation~\eqref{eq:distortion}
below.

\begin{remark}
If one is interested in considering towers with holes in the base, then the
definition of $q$ is modified to be
$q :=  \sum_{\ell \geq 1} m(H_\ell) \beta^{-(\ell -1)} + c_0^{-1}(1+C_1) m(H_0)
\sum_{Z_{\ell, j}^*} m(Z_{\ell, j}^*) \beta^{-\ell}$.
Assumption (H1) remains the same and all the results of this paper apply.
\end{remark}


\subsubsection{Transfer Operator}

In order to study the evolution of densities according to the dynamics of
$(F, \Delta)$, we introduce the
transfer operator $\Lp_F$ defined on $L^1(\Delta)$ by
\[
\Lp_Ff(x) = \sum_{y \in F^{-1}x} f(y)g(y)
\]
where $g = \frac{dm}{d (m \circ F)}$.
Unless otherwise noted, we will refer to $\Lp_F$ as simply $\Lp$ for the
rest of this paper.  Higher iterates of $\Lp$ are given by
\[
\Lp^n f(x) = \sum_{y \in F^{-n}x} f(y)g_n(y)
\]
where  $g_n = g \cdot g \circ F \cdots g \circ F^{n-1}$.
For $f \in L^1(\Delta)$, we define the Lipschitz constant of $f$ to be
\[
\mbox{Lip}(f) = \sup_{\ell, j} \mbox{Lip}(f_{\ell,j})
\quad \mbox{ and } \quad
\mbox{Lip}(f_{\ell,j}) =
\sup_{x \neq y \in Z_{\ell,j}} \frac{|f(x)-f(y)|}{d(x,y)}.
\]
We will assume that $\mbox{Lip}(\log g) <\infty$.
This assumption on $g$ implies the following standard distortion estimate.

There exists a constant $C_1>0$ such that for every $n >0$ and for all $x,y \in \Delta$ such that $s(x,y) \geq n$,
we have
\begin{equation}
\label{eq:distortion}
\left| \frac{g_n(x)}{g_n(y)} - 1\right| \leq C_1d(F^nx, F^ny).
\end{equation}
In particular, if $E_n(y)$ denotes the $n$-cylinder containing $y$, then
\begin{equation}
\label{eq:distortion2}
g_n(y) \leq (C_1+1) \frac{m(E_n(y))}{m(F^nE_n(y))}\leq (C_1+1)c_0^{-1}m(E_n(y)).
\end{equation}

It is easy to see that $d\mu = \vf\, dm$ is an {\em a.c.c.i.m.}\ with
eigenvalue $\lambda$ if and only if $\Lp \vf = \lambda \vf$.
Simply write for any measurable set $A \subset \Delta$,
\[
\mu(F^{-1}A) = \int_{F^{-1}A} \vf \, dm = \int_A \Lp \vf \, dm,
\qquad \mbox{and} \qquad
\lambda \mu(A) = \lambda \int_A \vf \, dm.
\]
Then the two left hand sides are equal if and only if the two right
hand sides are equal.
Thus the
properties of {\em a.c.c.i.m.}\ for $(F, \Delta)$ are tied to the
spectral properties of $\Lp$.


\subsection{First Results:  a Spectral Gap for $\Lp$}

We begin by proving a spectral decomposition for $\Lp$ corresponding to
$(F,\Delta)$ acting on a certain Banach space of functions.
The result follows essentially from Proposition~\ref{prop:lasota-yorke}
using estimates similar to those in
\cite{young} and \cite{demers exp}.  One important difference in the present
setting is that $\Lp$ does not have spectral radius 1, as it does for systems
without holes, so careful estimates are needed to ensure that
a discrete spectrum exists outside the disk of radius $\beta<1$.


\subsubsection{Definition of the Banach space}

Let $\V(\Delta)$ be the set of functions on $\Delta$ which are Lipschitz continuous on elements
of the partition $\Z$.  For each $Z_{\ell, j}$ and $f \in \V(\Delta)$,
we set $f_{\ell, j} = f|_{\zlj}$.
We denote by $|f|_\infty$ the $L^\infty$ norm of $f$ and define
\[
\|f_{\ell, j}\|_\infty  :=  |f_{\ell, j}|_\infty \beta^\ell,
\qquad \qquad
\|f_{\ell, j}\|_{\mbox{\tiny Lip}}   :=  \mbox{Lip}(f_{\ell, j}) \beta^\ell
\]
and
\[
||f|| = \max\{\|f\|_\infty, \|f\|_{\mbox{\tiny Lip}} \}
\]
where $\|f\|_\infty = \sup_{\ell, j} \|f_{\ell, j}\|_\infty$ and
$\|f\|_{\mbox{\tiny Lip}} = \sup_{\ell, j} \|f_{\ell, j}\|_{\mbox{\tiny Lip}}$.

Our Banach space is then $\B = \{ f \in \V(\Delta) : \|f\| < \infty \}$.
The choice $\beta\in(\theta,1)$
(where $\theta$ comes from condition (P1)) guarantees that
$\B \subset L^1(\Delta)$ and the
unit ball of $\B$ is compactly embedded in $L^1(\Delta)$.
The proof of this fact is
similar to that in \cite[Proposition 2.2]{demers exp}.


\subsubsection{Spectral picture and convergence results}

Let $|\cdot|_1$ denote the $L^1$-norm with respect to $m$.
In Section~\ref{lasota-yorke}, we prove the following.

\begin{proposition}
\label{prop:lasota-yorke}
Let $(F, \Delta, H)$ be a tower with holes satisfying properties (P1)-(P3) and assumption (H1).
Then there exists $C>0$ such that for each $n \in \mathbb{N}$ and all
$f \in \B$,
\begin{eqnarray*}
\| \Lp^nf \| & \leq & C \beta^n \|f\|_{\mbox{\tiny\rm Lip}} + C |f|_1.
\end{eqnarray*}
\end{proposition}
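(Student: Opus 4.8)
The plan is to use that the tower is a trivial skew-climb above its base, reducing the estimate to a Lasota--Yorke inequality for the induced action of $\Lp$ on $\Delta_0$, which one then proves from the finite-image property (P3), the distortion bound~\eqref{eq:distortion2}, and the small-hole hypothesis (H1).

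\emph{Step 1: reduction to the base.} Directly from the definition of $\F$, for $x$ at level $\ell$ the fibre $F^{-n}x$ is completely explicit. If $\ell>n$, its only surviving preimage is the straight descent $\F^{-n}x$, which stays in the non-truncated column of $x$ with $g_n\equiv1$, so $\Lp^nf(x)=f(\F^{-n}x)$. If $\ell\le n$, every preimage last visits $\Delta_0$ exactly $\ell$ steps before time $n$, at the base point $\bar x:=\F^{-\ell}x$ beneath $x$, so $\Lp^nf(x)=\Lp^{\,n-\ell}f(\bar x)$. Since two descendants of $x,y$ lying in a common column have separation time larger by exactly the number of climbed levels, the weights $\beta^\ell$ in $\|\cdot\|$ absorb the climb, and one gets $\|(\Lp^nf)|_{\Delta_\ell}\|\le\beta^n\|f\|$ when $\ell>n$, and $\|(\Lp^nf)|_{\Delta_\ell}\|\le\beta^\ell\,\|\Lp^{\,n-\ell}f\|^{(0)}$ when $\ell\le n$, where $\|\cdot\|^{(0)}$ is $\|\cdot\|$ restricted to base partition elements. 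Taking the supremum over $\ell$, the Proposition reduces to the single claim that $\|\Lp^kf\|^{(0)}\le C\beta^k\|f\|+C|f|_1$ uniformly in $k\ge0$.

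\emph{Step 2: the base estimate.} Here I would pass to the return map $F^R$ of $\Delta_0$ and its transfer operator $\hat\Lp$, and use operator renewal: for $x\in\Delta_0$, $\Lp^kf(x)=\sum_{y\in F^{-k}x}f(y)g_k(y)$ splits according to how many full returns to $\Delta_0$ the point $y$ has made by time $k$, with the cylinders that have entered $H$ simply absent (this is where (P2) is used --- its separation time treats ``fallen into $H$'' as a split). The ``not yet returned'' part picks up $f$ only from columns of height $\gtrsim k$, whose $m$-mass is $O(\theta^k)$ by (P1); after the metric weights, and because $\beta>\theta$, this is the source of the $C\beta^k\|f\|$ term. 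For the remaining part, the distortion bound~\eqref{eq:distortion2} ($g_k\le(C_1+1)c_0^{-1}m(E_k)$) together with the finite-image floor $m(Z')\ge c_0$ from (P3) show that $\hat\Lp$ across one return acts, up to a Lipschitz error, as an $m$-average, so summing the average parts telescopes along the Markov structure and collapses to $O(c_0^{-1})|f|_1$, while the accumulated Lipschitz errors along the column heights of a return are weighted by $\sum_{\ell\ge1}m(H_\ell)\beta^{-(\ell-1)}=q$. Hypothesis (H1), $q<(1-\beta)c_0/(1+C_1)$, is precisely the inequality making the amplification of $\hat\Lp$ across each return a factor $<1$ (of the form $q(1+C_1)/((1-\beta)c_0)$), so the renewal series over the number of returns converges with a constant independent of $k$. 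Combining Steps 1 and 2 gives the Proposition.

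\emph{The main obstacle.} Step 1 is essentially bookkeeping; the work is in Step 2, and I expect the crux to be keeping every constant uniform in $k$ and in the number of returns. Two points are delicate. First, the telescoping of the averaged parts must be organised so that it produces the genuine $L^1(m)$-norm $|f|_1$ and not a level-weighted one; this is exactly why the threshold in (H1) is that particular product of the distortion constant $C_1$, the geometric factor $(1-\beta)^{-1}$ of the $\beta$-metric, and the finite-image floor $c_0$. Second, one must genuinely use the leakage into the hole --- unlike the closed case, $\Lp$ here has spectral radius $<1$ --- and it is the \emph{strict} inequality in (H1) that upgrades the per-return bound from $\le1$ to a true contraction, so that the renewal series is summable. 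Carrying out that renewal estimate cleanly, without hidden $k$- or $p$-dependence in the constants, is the real technical burden.
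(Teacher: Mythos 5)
Your Step~1 is correct and coincides with the paper's Estimates~\#1--\#2 and the final reduction: for $\ell\ge n$ the operator is a pure descent with $g_n\equiv1$, and for $\ell<n$ one factors through $\Delta_0$ with an extra $\beta^\ell$ from the weights. The problem is Step~2, and specifically the role you assign to (H1). The paper's base estimate is a \emph{direct} sum over all $n$-step preimages $y\in F^{-n}x$, with no decomposition by number of returns and no operator-renewal argument. The distortion bound $g_n(y)\le(C_1+1)c_0^{-1}m(E_n(y))$ together with the pairwise disjointness of the cylinders $E_n(y)\subset\Delta^n$ immediately gives the weak term
\[
\sum_{y\in F^{-n}x}(1+C_1)c_0^{-1}\int_{E_n(y)}|f|\,dm \;\le\;(1+C_1)c_0^{-1}|f|_1,
\]
and the Lipschitz remainder is controlled by $\beta^n\sum_\ell\beta^{-\ell}m(\Delta_\ell)\lesssim\beta^n\sum_\ell(\theta/\beta)^\ell$, which converges solely because $\beta>\theta$ (property (P1)). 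Assumption (H1) is \emph{not used at all} in the Lasota--Yorke estimate.

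This is more than a stylistic difference: your explanation that (H1) "is precisely the inequality making the amplification across each return a factor $<1$" and that "the strict inequality in (H1) upgrades the per-return bound to a true contraction" misidentifies where (H1) does its work. The quantity $q(1+C_1)/((1-\beta)c_0)<1$ appears in Proposition~\ref{prop:regularity}, where (H1) is used to show $|\Lp f|_1>\beta$ for $f\in\B_M$, forcing the leading eigenvalue $\lambda$ of $\Lp$ to exceed $\beta$ and thereby producing the spectral gap. It has nothing to do with establishing the Doeblin--Fortet inequality itself, which holds regardless of hole size. If you wanted to push your renewal scheme through, the summability over the number of returns would have to come from the tail bound $m(R>n)\lesssim\theta^n$ and $\beta>\theta$ (the same ingredients as the direct proof), not from (H1); as stated your Step~2 would not close.
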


Proposition~\ref{prop:lasota-yorke}, together with the compactness of the
unit ball of $\B$ in $L^1(m)$ and the fact that $|\Lp^nf|_1 \leq |f|_1$,
are enough
to conclude that $\Lp:\B \circlearrowright$
has essential spectral radius bounded by
$\beta$ and spectral radius bounded by 1 \cite{baladi}.
However, since the system is open, we expect the actual spectral
radius of $\Lp$ to be a constant $\lambda < 1$.  We must show that
$\lambda>\beta$
in order to conclude
that there is a spectral gap.  This fact is proved in
Section~\ref{spectral gap} using assumption
(H1) on the measure of the hole.

Once a spectral gap has been established, the next proposition
shows that the familiar spectral picture holds true for the open
system.  This is proved in Section~\ref{spectral gap}.

\begin{proposition}
\label{prop:spectral picture}
The spectral radius of $\Lp$ on $\B$ is $\lambda > \beta$ and $\Lp$
is quasi-compact as an operator on $\B$.  In addition,
\begin{enumerate}
  \item[(i)]  If $F$ is mixing, then $\lambda$ is a simple eigenvalue and
    all other eigenvalues have modulus strictly less than $\lambda$.
    Moreover, there exists $\delta>0$ such that the unique probability
    density $\vf$ corresponding to $\lambda$ satisfies
    $\delta \lambda^{-\ell} \leq \vf \leq \delta^{-1} \lambda^{-\ell}$,
    on each $\Delta_\ell$.
  \item[(ii)] If $F$ is transitive and periodic with period $p$, then
    the set of eigenvalues of modulus $\lambda$ consists
    of simple eigenvalues $\{ \lambda e^{2\pi i k/p} \}_{k=0}^{p-1}$.
    The unique probability density corresponding to $\lambda$ satisfies
    the same bounds as in (i).
  \item[(iii)] In general, $F$ has finitely many transitive
    components, each with its own largest eigenvalue $\lambda_j$.  On
    each component, (ii) applies.
\end{enumerate}
\end{proposition}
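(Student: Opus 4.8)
\medskip
\noindent\textbf{Proof proposal.}
The plan is to convert the Lasota--Yorke inequality of Proposition~\ref{prop:lasota-yorke} into quasi-compactness, then to locate the peripheral spectrum using the hole-size assumption (H1) and the positivity of $\Lp$, and finally to extract the trichotomy (i)--(iii) from the finite combinatorics of $\Z_0^{im}$. For the first step, Proposition~\ref{prop:lasota-yorke}, the compactness of the unit ball of $\B$ in $L^1(m)$, and the contraction $|\Lp^nf|_1\le|f|_1$ are exactly the hypotheses of the standard quasi-compactness criterion (Ionescu-Tulcea--Marinescu/Hennion; see \cite{baladi}); this gives at once that $\Lp:\B\circlearrowright$ is bounded and quasi-compact with essential spectral radius at most $\beta$ and spectral radius $\lambda\le1$, so the spectrum in $\{|z|>\beta\}$ is a finite set of eigenvalues of finite algebraic multiplicity, and everything reduces to locating that part.

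The crucial step is the spectral gap $\lambda>\beta$: with a large hole the escape rate can exceed $-\log\beta$ and no discrete spectrum survives outside the disc of radius $\beta$, so the smallness assumption must be used here. Since $\B\hookrightarrow L^1(m)$ continuously one has $|\Lp^nf|_1\le\mathrm{const}\cdot\|\Lp^n\|_{\B}\,\|f\|_{\B}$, so it is enough to produce one $f\in\B$ with $|\Lp^nf|_1\ge c\gamma^n$ for some $\gamma\in(\beta,1)$ and all $n$. Taking $f\equiv1\in\B$ and using the duality $\int\Lp^n1\,dm=\int_{\Delta^n}1\,dm$, this amounts to the lower bound $m(\Delta^n)\ge c\gamma^n$ on the measure of the set of points not yet fallen into the hole by time $n$. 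I would establish this by showing, with \eqref{eq:distortion2}, the finiteness of $\Z_0^{im}$, and (H1) in the quantitative form $q<(1-\beta)c_0/(1+C_1)$, that the mass lost at each return to $\Delta_0$ is small enough to keep the geometric rate strictly above $\beta$, and then iterating over returns. (This is the estimate of Section~\ref{spectral gap}.) It follows that $\lambda=r(\Lp)\ge\gamma>\beta$, so $\Lp$ has a genuine spectral gap and the peripheral set $\{|z|=\lambda\}$ is a finite set of eigenvalues with finite-dimensional generalized eigenspaces, isolated from the rest of the spectrum.

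For the structure (i)--(iii): $\Lp$ is positive, so the gap together with a Perron--Frobenius/Krein--Rutman argument (equivalently a Birkhoff-cone contraction on a cone inside $\B$ adapted to the weights $\beta^\ell$) shows $\lambda$ is itself an eigenvalue with a nonnegative eigenfunction. Since $\Z_0^{im}$ is finite, the directed graph on $\Z_0^{im}$ with an edge $Z_1'\to Z_2'$ whenever $F^nZ_1'\cap Z_2'\neq\emptyset$ for some $n$ has finitely many communicating classes; these are the transitive components, each carrying its own largest eigenvalue $\lambda_j$, and on each of them the classical Ruelle--Perron--Frobenius dichotomy applies: $\lambda$ is simple and strictly dominant when $F$ is mixing (case (i)), and the modulus-$\lambda$ eigenvalues are exactly $\{\lambda e^{2\pi ik/p}\}_{k=0}^{p-1}$, all simple, when $F$ is transitive of period $p$ (case (ii)); assembling the components gives (iii). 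One should check here that for a tower with several bases the relevant notion of period is the one attached to the mixing condition ``$\Delta_0\subset F^nZ'$ for all large $n$'', as flagged in the remark preceding the proposition, so that $p>1$ is genuinely excluded in the mixing case. Finally, the density profile $\delta\lambda^{-\ell}\le\vf\le\delta^{-1}\lambda^{-\ell}$ uses that $g\equiv1$ off the top cells $F^{-1}\Delta_0$ (moving up the tower preserves $m$): a point of $\Delta_\ell$, $\ell\ge1$, has a unique $F$-preimage, which lies in $\Delta_{\ell-1}$ and is not a top cell, so the eigenvalue equation forces $\vf|_{\Delta_\ell}=\lambda^{-\ell}\,\vf|_{\Delta_0}\circ\F^{-\ell}$; it then suffices to bound $\vf|_{\Delta_0}$ between positive constants, which follows from the identity $\vf|_{\Delta_0}(x)=\lambda^{-n}\sum_{y\in F^{-n}x}\vf(y)g_n(y)$ with \eqref{eq:distortion2} and $\lambda>\beta$ (upper bound) and from strict positivity of $\vf$ somewhere on $\Delta_0$, transitivity, finiteness of $\Z_0^{im}$, and bounded distortion (lower bound).

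I expect the main obstacle to be the spectral gap $\lambda>\beta$: the Lasota--Yorke inequality by itself controls only the essential spectral radius, and it is the quantitative assumption (H1), funneled through the escape-rate bound $m(\Delta^n)\ge c\gamma^n$, that forces genuine discrete spectrum to exist outside the disc of radius $\beta$. A secondary technical point, caused by allowing several bases, is identifying the correct notions of period and mixing so that the trichotomy (i)--(iii) comes out cleanly.
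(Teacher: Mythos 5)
Your overall architecture matches the paper's -- Lasota--Yorke $\Rightarrow$ quasi-compactness, then locate and describe the peripheral spectrum -- but you diverge from the paper's route in two places, one of which is an acceptable alternative and the other a genuine gap.

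\emph{Spectral gap.} You propose to get $\lambda>\beta$ by lower-bounding $m(\Delta^n)\ge c\gamma^n$ (taking $f\equiv1$), then invoking positivity to produce an eigenfunction at the spectral radius. The paper instead shows (Proposition~\ref{prop:regularity}) that $\Lp_1^N$ preserves the convex, $L^1$-compact set $\B_M$, and applies the Schauder--Tychonoff theorem to extract a fixed point $\vf\in\B_M$ directly, with eigenvalue $\rho=|\Lp^N\vf|_1>\beta^N$ coming from (H1). Both routes rest on the same ingredient -- the estimate $|\Lp f|_1\ge\beta'>\beta$ for $f\in\B_M$, calibrated by (H1) -- and yours is viable if you then carefully argue that a positive quasi-compact operator with $r(\Lp)>r_{\text{ess}}(\Lp)$ has $r(\Lp)$ as an eigenvalue with a nonnegative eigenvector; but Schauder--Tychonoff gives you the eigenfunction inside $\B_M$ at once, which is what the rest of the proof needs. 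If you want to run your version, be explicit that the constant-$1$ lower bound comes by iterating Proposition~\ref{prop:regularity}: for all $n$ past some $N_0$, $\Lp_1^nf\in\B_M$, hence $|\Lp^n f|_1\ge c(\beta')^{n-N_0}$, and then $\|\Lp^n\|\ge c'(\beta')^n$ forces $\lambda\ge\beta'>\beta$.

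\emph{Simplicity and absence of peripheral spectrum.} Here your appeal to ``the classical Ruelle--Perron--Frobenius dichotomy'' and ``Krein--Rutman'' papers over the hardest part of the argument and would fail as stated. The operator is not compact, the partition $\Z$ is countably infinite, the norm carries the weights $\beta^\ell$, and there is no off-the-shelf RPF theorem that covers towers with holes and multiple bases. The paper's Steps~4 and~5 prove simplicity and the absence of other modulus-$\lambda$ eigenvalues with a bespoke convexity argument: for a competing eigenfunction $f$, one deforms $\psi_s=s\vf+(1-s)\psi$ and uses that the log-Lipschitz seminorm is scale-invariant and stable under $\Lp_1^n$ (inequality~\eqref{eq:log reg}), plus the uniform growth $\vf|\Delta_\ell\sim\lambda^{-\ell}$, to show the set $J=\{s:\inf\psi_s>0\}$ is both open and closed; for complex peripheral eigenvalues the argument needs the additional equidistribution input encoded in Lemma~\ref{lem:positive}, which uses mixing, boundedness of inverse branches on finitely many return images, and the almost-periodicity of $e^{i\omega n}$. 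None of this follows from quoting a Birkhoff-cone contraction unless you actually establish finite projective diameter of the image cone, which in the tower-with-holes setting is precisely what the $\B_M$ invariance and Lemma~\ref{lem:positive} are engineered to replace. You also omit Step~6 (ruling out that the Schauder fixed point is only $\Lp^{N_0}$-invariant with $N_0>1$), which requires a short but separate argument.

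The density profile $\delta\lambda^{-\ell}\le\vf\le\delta^{-1}\lambda^{-\ell}$ and the reduction (iii)$\to$(ii)$\to$(i) are fine, and your observation that the preimage of a point in $\Delta_\ell$, $\ell\ge1$, is unique and lies in $\Delta_{\ell-1}$ is exactly the mechanism the paper uses.
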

Since Proposition \ref{prop:spectral picture} eliminates the
possibility of generalized eigenvectors, the projection
$\Pi_\lambda$ onto the eigenspace of eigenvalue $\lambda$ is
characterized for each $f \in \B$ by the limit
\[
\Pi_\lambda f
= \lim_{n \to \infty} \frac{1}{n}\sum_{k=0}^{n-1} \lambda^{-k} \Lp^kf
\]
where convergence is in the $\| \cdot \|$-norm.
By (iii), the eigenspace $\mathbb{V}_\lambda := \Pi_\lambda \B$ has a finite
basis of probability densities, each representing an \accim\ with
escape rate $-\log \lambda$.

\begin{cor}
\label{cor-exp}
Suppose that $F$ is mixing and let $\vf \in \mathbb{V}_\lambda$
denote the unique probability density given by (i).
Then there exists $\sigma\in(0,1)$ and $C\ge1$ such that
\[
\|\lambda^{-n}\Lp^nf-c(f)\vf\|\leq C\|f\|\sigma^n,
\]
for all $f\in\B$ where $c(f)$ is a constant depending on $f$.
\end{cor}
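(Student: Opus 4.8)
The plan is to deduce Corollary~\ref{cor-exp} directly from the quasi-compactness and spectral gap provided by Proposition~\ref{prop:spectral picture}(i), using the associated spectral projection. Since $F$ is mixing, $\lambda$ is a simple eigenvalue of $\Lp$ on $\B$ with one-dimensional eigenspace spanned by $\vf$, and all other spectrum lies in a disk of radius $\rho < \lambda$; moreover, by Proposition~\ref{prop:spectral picture} there are no generalized eigenvectors at $\lambda$, so the Riesz projection $\Pi_\lambda$ is a rank-one bounded operator on $\B$. First I would write $\Pi_\lambda f = c(f)\vf$, where $c:\B\to\R$ is a bounded linear functional (this is exactly the normalization of the rank-one projection against $\vf$); boundedness of $c$ follows from boundedness of $\Pi_\lambda$ on $\B$ together with $\|\vf\|>0$. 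This identifies the constant $c(f)$ in the statement.

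Next I would set $Q = \Lp - \lambda \Pi_\lambda$, the complementary part of $\Lp$. Because $\Pi_\lambda$ commutes with $\Lp$ and $\Pi_\lambda^2 = \Pi_\lambda$, we get $\Lp^n = \lambda^n \Pi_\lambda + Q^n$ for all $n\ge 1$, with $Q^n = \Lp^n(\mathrm{Id} - \Pi_\lambda)$. The spectral radius of $Q$ acting on $(\mathrm{Id}-\Pi_\lambda)\B$ equals the second-largest spectral value of $\Lp$, which is strictly smaller than $\lambda$; hence, by the spectral radius formula, for any $\sigma$ with $\rho/\lambda < \sigma < 1$ there is $C\ge 1$ with $\|Q^n\|_{\B\to\B} \le C\lambda^n\sigma^n$. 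Therefore
\[
\|\lambda^{-n}\Lp^n f - c(f)\vf\| = \lambda^{-n}\|Q^n f\| \le \lambda^{-n}\|Q^n\|_{\B\to\B}\,\|f\| \le C\sigma^n\|f\|,
\]
which is precisely the claimed bound (after relabeling $C$).

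The only genuine content beyond bookkeeping is justifying the uniform bound $\|Q^n\|\le C\lambda^n\sigma^n$: the spectral radius formula gives $\|Q^n\|^{1/n}\to r(Q)<\lambda$, but to promote this to a bound valid for \emph{all} $n$ (with an explicit constant $C$) one invokes that $\B$ is a Banach space and $Q$ a bounded operator, so $\sup_n \|Q^n\|/(\lambda\sigma)^n < \infty$ once $\lambda\sigma > r(Q)$. I expect this to be the main (though standard) point; everything else is a direct consequence of quasi-compactness and the absence of a Jordan block at $\lambda$, both already supplied by Proposition~\ref{prop:spectral picture}. One should also note that the $\|\cdot\|$-convergence here is stronger than, and immediately implies, the $L^1$-convergence $\Lp^n f/|\Lp^n f|_1 \to \vf$ advertised in the introduction, since $\|\cdot\|$ dominates $|\cdot|_1$ up to a constant and $|\Lp^n f|_1 = c(f)\lambda^n(1+o(1))$ when $c(f)\neq 0$.
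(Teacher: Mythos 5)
Your proposal is correct and follows essentially the same route as the paper: both arguments rest on the spectral decomposition $\B=\mathbb{V}_\lambda\oplus\mathbb{W}_\lambda$ supplied by Proposition~\ref{prop:spectral picture}(i) (simple eigenvalue at $\lambda$, no further peripheral spectrum, no Jordan block), followed by the standard spectral-radius estimate on the complementary invariant subspace. The paper's proof is more terse (it passes to $\lambda^{-1}\Lp$ and simply cites the spectral radius $\rho<1$ on $\mathbb{W}_\lambda$), while you explicitly introduce the Riesz projection $\Pi_\lambda$ and the operator $Q=\Lp-\lambda\Pi_\lambda$ and spell out the promotion of $\|Q^n\|^{1/n}\to r(Q)$ to a uniform bound; this is just filling in standard details of the same argument.
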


\begin{proof}
The operator $\lambda^{-1}\Lp:\B\to\B$ has spectral radius $1$ and
essential spectral radius $\beta\lambda^{-1}$.
Moreover, there is a simple eigenvalue at $1$ with eigenspace
$\mathbb{V}_\lambda$
spanned by $\vf$ and no further eigenvalues on the unit circle.
Hence there is an $\Lp$-invariant closed
splitting $\B=\mathbb{V}_\lambda\oplus \mathbb{W}_\lambda$ and
$\Lp:\mathbb{W}_\lambda\to \mathbb{W}_\lambda$
has spectral radius $\rho\in(\beta\lambda^{-1},1)$.
The result follows for any $\sigma\in(\rho,1)$.
\end{proof}

The expression in Corollary~\ref{cor-exp} is not satisfactory,
however, if one wishes to obtain
an approximation of the conditionally invariant measure when the eigenvalue
is not known in advance.  In such a case, the object of interest
is the limit $\frac{\Lp^nf}{|\Lp^nf|_1}$ as $n \to \infty$
where the density is renormalized at each step.

\begin{proposition}
\label{prop:convergence}
Let $(F, \Delta)$ be mixing and satisfy the hypotheses of
Proposition~\ref{prop:lasota-yorke}, and let $f \in \B$.
Then $c(f) > 0$ if and only if
\[
\lim_{n \to \infty} \frac{\Lp^nf}{|\Lp^nf|_1} = \vf
\]
where convergence is in the $\| \cdot \|$-norm.
Moreover convergence is at the rate $\sigma^n$ where $\sigma$ is as in
Corollary~\ref{cor-exp}.
\end{proposition}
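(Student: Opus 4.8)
The plan is to reduce everything to Corollary~\ref{cor-exp}, which already supplies the exponential asymptotics $\lambda^{-n}\Lp^n f = c(f)\vf + r_n$ with $\|r_n\| \le C\|f\|\sigma^n$; what remains is bookkeeping together with three small auxiliary facts. First, $\B$ embeds continuously in $L^1(\Delta)$: there is a constant $C_\ast$ with $|h|_1 \le C_\ast \|h\|$ for all $h \in \B$, obtained by summing $\int_{\Delta_\ell} |h|\, dm \le \beta^{-\ell}\|h\|\, m(\Delta_\ell) \le C\|h\|(\theta/\beta)^\ell$ over $\ell$ and using $\theta < \beta$ from (P1). Second, by Proposition~\ref{prop:spectral picture}(i) the eigendensity $\vf$ is a genuine \emph{probability} density, so $|\vf|_1 = 1$. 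Third, $c : \B \to \R$ is a bounded linear functional with $c(\vf) = 1$, since $\Pi_\lambda$ is a bounded projection onto the line $\R\vf$ and $c(f)$ is merely the $\vf$-coordinate of $\Pi_\lambda f$.

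For the forward implication I would assume $c(f) > 0$ and divide $\Lp^n f = \lambda^n(c(f)\vf + r_n)$ by its $L^1$-norm, which cancels the $\lambda^n$:
\[
\frac{\Lp^n f}{|\Lp^n f|_1} = \frac{c(f)\vf + r_n}{a_n}, \qquad a_n := |c(f)\vf + r_n|_1 .
\]
Since $|c(f)\vf|_1 = c(f)$ and $|r_n|_1 \le C_\ast C\|f\|\sigma^n \to 0$, the reverse triangle inequality gives $|a_n - c(f)| \le |r_n|_1$, so $a_n \to c(f) > 0$; in particular $\Lp^n f \ne 0$ for large $n$. Then
\[
\frac{\Lp^n f}{|\Lp^n f|_1} - \vf = \frac{(c(f) - a_n)\vf + r_n}{a_n},
\]
and the $\|\cdot\|$-norm of the right side is at most $a_n^{-1}(|a_n - c(f)|\,\|\vf\| + \|r_n\|) \le a_n^{-1}(C_\ast\|\vf\| + 1)\,C\|f\|\sigma^n$, which decays like $\sigma^n$ (with a constant depending on $f$ through $\|f\|$ and $c(f)$). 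This is exactly the claimed convergence and rate.

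For the converse I would prove the contrapositive: if $c(f) \le 0$, the renormalized iterates cannot converge to $\vf$. If $\Lp^n f = 0$ for some $n$ then $\Lp^m f = 0$ for all $m \ge n$ and the sequence $\Lp^n f / |\Lp^n f|_1$ is undefined for large $n$, so there is nothing to prove; otherwise, applying the linear functional $c$ and using $c(\Lp^n f) = \lambda^n c(f)$,
\[
c\!\left( \frac{\Lp^n f}{|\Lp^n f|_1} \right) = \frac{\lambda^n c(f)}{|\Lp^n f|_1} \le 0 \qquad \text{for all } n .
\]
If $\Lp^n f / |\Lp^n f|_1 \to \vf$ in $\|\cdot\|$, continuity of $c$ would force the left-hand side to tend to $c(\vf) = 1 > 0$, a contradiction; hence $c(f) > 0$ is necessary. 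I do not anticipate a genuine obstacle: the analytic substance is entirely in Corollary~\ref{cor-exp}, and the only points requiring care are the routine embedding estimate, the boundedness of $c$, keeping the normalization $|\vf|_1 = 1$ of Proposition~\ref{prop:spectral picture}, and separating off the degenerate case $\Lp^n f \equiv 0$.
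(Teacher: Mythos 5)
Your proof is correct and follows essentially the same route as the paper's: both reduce everything to Corollary~\ref{cor-exp}, use that $\vf$ is a probability density and that $c$ is a bounded linear functional (equivalently, that $\mathbb{W}_\lambda=\ker c$ is closed), and obtain the converse from the sign of $c$ applied to the renormalized iterates. Your write-up is more explicit than the paper's (which is quite terse) in two useful respects: you actually carry through the $\sigma^n$ rate via the $a_n$ bookkeeping rather than merely asserting it, and your converse via continuity of $c$ handles $c(f)<0$ and the degenerate case $\Lp^n f=0$ in a single stroke, whereas the paper's subspace phrasing directly addresses only $c(f)=0$.
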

\begin{proof}
Note that $\lambda^{-n}|\Lp^n f|_1 \to |c(f)|$ by Corollary~\ref{cor-exp}
so that if $c(f) > 0$, we may write
\[
\lim_{n \to \infty} \frac{\Lp^nf}{|\Lp^nf|_1}
   = \lim_{n \to \infty} \frac{\Lp^nf}{\lambda^n} \frac{\lambda^n}{|\Lp^nf|_1}
= \varphi.
\]
The converse follows from the linear structure of $\Lp$.
We write $\B = \mathbb{V}_\lambda \oplus \mathbb{W}_\lambda$ as in the
proof of Corollary~\ref{cor-exp}.
Then $\mathbb{W}_\lambda = \{ g \in \B : c(g) = 0 \}$.
\end{proof}

\begin{remark}
In what follows, we will be interested in establishing which functions
satisfy $c(f)>0$, first on the tower and then for the concrete systems
for which towers are constructed.  Proposition~\ref{prop:convergence criterion}
guarantees that in
particular $c(1) >0$ so that the reference measure on $\Delta$ converges to
the \accim
\end{remark}


\subsection{An Equilibrium Principle for $(F, \Delta)$}
\label{tower equilibrium results}

The characterization of $\vf$ in terms of the physical limit
$\Lp^nf/|\Lp^nf|_1$ allows us to construct an invariant
measure $\nu$ singular with respect to $m$ and supported on
$\Delta^\infty = \cap_{n=0}^\infty \Delta^n$,
the set of points which never enter the hole.  Although $\nu$ is
supported on a zero $m$-measure Cantor-like set, the results of this
section indicate that it is physically relevant to the system.

To state our results, we first introduce a new Banach space $\mathcal{B}_0$ consisting
of functions that are uniformly bounded and uniformly locally Lipschitz.   More
precisely, let $|f|_\infty$ denote the standard sup-norm. Then define $|f|_{\mbox{\tiny
Lip}}=\sup_{\ell, j} \mbox{Lip}(f_{\ell,j})$ and $\|f\|_0=\max\{|f|_\infty,
|f|_{\mbox{\tiny Lip}}\}$. Note that contrary to $\|f\|_{\mbox{\tiny Lip}}$, the
seminorm  $|f|_{\mbox{\tiny Lip}}$ doesn't have the weights $\beta^\ell$. Finally, let
\begin{equation}\label{eq:B0}
\B_0 := \{f\in\B:\|f\|_0<\infty\}.
\end{equation}

The following proposition is proved in Section~\ref{invariant measure}.

\begin{proposition}
\label{prop:inv measure}
Suppose $(F, \Delta)$ satisfies properties (P1)-(P3) and (H1) of
Section~\ref{holes} and is mixing.
Then $(F, \Delta)$ admits an invariant probability measure $\nu$
supported on $\Delta^\infty$, which satisfies
\[
\nu(f) = \lim_{n \to \infty} \lambda^{-n} \int_{\Delta^n} f \, d\mu
\]
for all $f \in \B_0$.
In addition, $\nu$ is ergodic and
\[
\Bigl|\int_{\Delta^\infty} f_1 \, f_2 \circ F^n \, d\nu
- \nu(f_1) \nu(f_2) \Bigr| \leq  C \| f_1 \| \, |f_2|_\infty \sigma^n
\]
for all $f_1, f_2 \in\B_0$, $n\ge1$.
\end{proposition}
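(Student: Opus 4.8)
The plan is to build $\nu$ as a push-forward/pull-back limit of the conditionally invariant measure $\mu = \varphi\,dm$ restricted to the survivor sets $\Delta^n$, exploiting the spectral gap from Proposition~\ref{prop:spectral picture} and Corollary~\ref{cor-exp}. First I would define, for $f \in \B_0$, the candidate linear functional $\nu(f) = \lim_{n\to\infty}\lambda^{-n}\int_{\Delta^n} f\,d\mu$ and show the limit exists. The key observation is that $\int_{\Delta^n} f\,d\mu = \int_\Delta (f\cdot \mathbf{1}_{\Delta^n})\,d\mu$, and that $\int_\Delta g\,d\mu = \int g\,\varphi\,dm$ relates to the transfer operator via $\int_{\Delta^n} f\,d\mu = \int_\Delta \Lp^n(f\varphi\cdot\mathbf{1}_{\Delta^n}\text{-type terms})$; more cleanly, one writes $\int_{\Delta^n} f\,d\mu = \int_\Delta f\circ F^n \cdot (\text{density pushed down})$, or dually $\lambda^{-n}\int_{\Delta^n} f\,d\mu = \int_\Delta f\,\lambda^{-n}\Lp^n_{|\Delta^n}(\varphi)\cdots$. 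The honest route: since $\mu$ is conditionally invariant with eigenvalue $\lambda$, for $f\in\B_0$ one shows $\lambda^{-n}\int_{\Delta^n} f\,d\mu$ is Cauchy by comparing consecutive terms and using that $f\varphi$-type quantities sit in $\B$ (here the weights $\beta^\ell$ in $\|\cdot\|$ versus the weightless $\|\cdot\|_0$ matter: $f\in\B_0$ and $\varphi\in\B$ with $\varphi \asymp \lambda^{-\ell}$ forces us to track the $\lambda^{-\ell}$ growth carefully, which is exactly why $\B_0$ is the right domain). Convergence at rate $\sigma^n$ should fall out of Corollary~\ref{cor-exp} applied to an appropriate element of $\B$.

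Next I would verify that $\nu$ is a probability measure: positivity is immediate since $\mu\ge 0$ and $\Delta^n\supset\Delta^{n+1}$; taking $f\equiv 1$ gives $\nu(1) = \lim \lambda^{-n}\mu(\Delta^n) = \lim\lambda^{-n}\cdot\lambda^n = 1$ using $\mu(\Delta^n)=\mu(F^{-n}\Delta)=\lambda^n$ (wait — one must check $\mu(\Delta^n)=\lambda^n$; indeed $\mu(T^{-n}X)=\lambda^n$ on the tower, with $\Delta^n$ playing the role of the $n$-step survivor set, so $\lambda^{-n}\mu(\Delta^n)\to 1$, in fact equals $1$ for each $n$ if normalized correctly, or converges otherwise). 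That $\nu$ is supported on $\Delta^\infty = \bigcap_n\Delta^n$ follows because for any $f$ supported off $\Delta^\infty$, $f\cdot\mathbf{1}_{\Delta^n}\to 0$ pointwise and dominated convergence kills the limit — made rigorous using the exponential decay $\mu(\Delta^n\setminus\Delta^{n+1})$, controlled by (H1) and (P1). Invariance $\nu(f\circ F) = \nu(f)$ follows from $\int_{\Delta^n}(f\circ F)\,d\mu = \int_{F^{-1}(\Delta^n)\cap\Delta} f\,dF_*\mu$ combined with the conditional invariance relation $F_*\mu = \lambda\mu$ on $\Delta$ and the identity $F^{-1}\Delta^n\cap\Delta = \Delta^{n+1}$; the factor of $\lambda$ is absorbed by the $\lambda^{-n}$ normalization.

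For ergodicity and exponential decay of correlations, the strategy is to convert the correlation integral against $\nu$ into a transfer-operator expression against $\mu$ and then invoke the spectral gap. Concretely, $\int_{\Delta^\infty} f_1\,(f_2\circ F^n)\,d\nu$ should be rewritten, via the defining limit and the relation between $\nu$ and $\mu$, as $\lim_{m}\lambda^{-m}\int_{\Delta^m} f_1\,(f_2\circ F^n)\,d\mu$, then pushed through $F^n$ to produce something like $\lambda^{-n}\int f_2\cdot \Lp^n(f_1\varphi\cdot(\text{survivor indicator}))\,dm$; applying Corollary~\ref{cor-exp} to $\lambda^{-n}\Lp^n$ acting on the $\B$-element $f_1\varphi$ gives leading term $c(f_1\varphi)\varphi$ with error $O(\|f_1\varphi\|\sigma^n)$, and $c(f_1\varphi)$ is identified with $\nu(f_1)$, leaving the product $\nu(f_1)\int f_2\varphi\,dm$-type term which matches $\nu(f_1)\nu(f_2)$. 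The bound $C\|f_1\|\,|f_2|_\infty\sigma^n$ emerges with $\|f_1\|$ (not $\|f_1\|_0$) because $\varphi\in\B$ and multiplication by $\varphi$ maps $\B_0\to\B$ boundedly. Ergodicity is then the standard consequence: decay of correlations for a dense class implies the system has no nontrivial invariant sets.

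\medskip\noindent\textbf{Main obstacle.} The delicate point is handling the product $f_1\varphi$ and the survivor-set indicators: $\varphi$ grows like $\lambda^{-\ell}$ up the tower while $\Lp$ has spectral radius $\lambda>\beta$, so one is working near the edge of the spectral gap and the weighted norm $\|\cdot\|$ with weights $\beta^\ell$ barely controls $f_1\varphi$. Making the manipulation "rewrite $\int_{\Delta^\infty}\cdots d\nu$ as a transfer-operator expression" fully rigorous — in particular justifying the interchange of the limit defining $\nu$ with the $F^n$-pushforward, and checking that cutting off at level $\Delta^m$ and letting $m\to\infty$ commutes with applying $\Lp^n$ — is where the real work lies; everything else is bookkeeping with the exponential estimates already available from (P1), (H1), and Proposition~\ref{prop:lasota-yorke}.
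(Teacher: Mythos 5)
Your proposal follows essentially the same route as the paper: define $\nu(f)$ via the limit $\lim_n\lambda^{-n}\int_{\Delta^n}f\,d\mu = \lim_n\lambda^{-n}\int_\Delta\Lp^n(f\varphi)\,dm$, observe that $f\in\B_0$, $\varphi\in\B$ with $\varphi\asymp\lambda^{-\ell}$ forces $f\varphi\in\B$ so Corollary~\ref{cor-exp} gives the limit $c(f\varphi)$ with rate $\sigma^n$, verify support on $\Delta^\infty$ and $F$-invariance using conditional invariance of $\mu$, and derive the correlation bound by rewriting $\int f_1\,f_2\circ F^n\,d\nu$ as a $\lambda^{-n}\Lp^n(f_1\varphi)$ expression and invoking the spectral gap --- all exactly as in the paper. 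The only deviation is your ergodicity step, which you derive as a consequence of the decay of correlations applied to indicators of $\Z$-elements, whereas the paper gives a short direct argument from transitivity of $F$ on $\Z_0^{im}$ and the fact that $\Z$ generates; both are valid and roughly equal in length.
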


In addition to its characterization as a limit,
the invariant measure $\nu$ is natural to the system in the sense
that it satisfies the below equilibrium principle.

Let $\nu_0 := \frac{1}{\nu(\Delta_0)} \nu|_{\Delta_0}$ and note that
$\nu_0$ is an invariant measure for $F^R$ on $\Delta^\infty \cap \Delta_0$.
Proposition~\ref{prop:tower equilibrium} shows that in fact $\nu_0$ is a
Gibbs measure for $F^R$.

We call a measure $\eta$ \emph{nonsingular} provided $\eta(F(A)) = 0$
if and only if $\eta(A) = 0$.
The following theorem is proved in
Section~\ref{equilibrium}.

\begin{theorem}
\label{thm:F-equilibrium}
Let $(F, \Delta)$ satisfy the hypotheses of Proposition~\ref{prop:inv measure}.
Let $\M_F$ be the set of $F$-invariant Borel probability
measures on $\Delta$.  Then
\[
\log \lambda = \sup_{\eta \in \M_F} \left\{ h_\eta(F)
  - \int_\Delta \log JF \; d\eta \right\}
\]
where $h_\eta(F)$ is the metric entropy of $\eta$ with respect to $F$ and
$JF$ is the Jacobian of $F$ with respect to $m$.  In addition, $\nu$ is the
unique nonsingular measure in $\M_F$ which attains the supremum.
\end{theorem}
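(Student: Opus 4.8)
The plan is to establish the variational principle in two halves. For the upper bound, I would show that $h_\eta(F) - \int_\Delta \log JF \, d\eta \le \log\lambda$ for every $\eta \in \M_F$. The natural tool is a Rokhlin-type formula together with the fact that $\vf$ (the density of $\mu$ from Proposition~\ref{prop:spectral picture}) satisfies $\Lp\vf = \lambda\vf$, i.e.\ $\sum_{y\in F^{-1}x} \vf(y)/JF(y) = \lambda\vf(x)$. Writing the potential as $\psi := -\log JF + \log\vf - \log(\vf\circ F) - \log\lambda$, one checks that $\sum_{y\in F^{-1}x} e^{\psi(y)} = 1$, so $\psi$ is a normalized potential relative to the partition $\Z$. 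For such potentials, the standard entropy inequality $h_\eta(F) + \int \psi \, d\eta \le 0$ holds for any $F$-invariant $\eta$ (this is the easy half of the Ruelle inequality/Gibbs variational principle for countable Markov shifts; $\Z$ is a generating partition by (P2), and the telescoping terms $\int(\log\vf - \log\vf\circ F)\,d\eta$ cancel by invariance since $\vf$ is bounded above and below on each level by Proposition~\ref{prop:spectral picture}). Rearranging gives $h_\eta(F) - \int\log JF\,d\eta \le \log\lambda$. One technical point: $\int \log\vf \, d\eta$ could a priori be $-\infty$ if $\eta$ charges high levels too heavily, but then $\int\log JF\,d\eta$ would also be controlled, and in any case the inequality direction is preserved; I would handle integrability by a truncation argument over finitely many levels and let the cutoff go to infinity.

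For the lower bound, I would show that $\nu$ attains the supremum, i.e.\ $h_\nu(F) - \int_\Delta \log JF \, d\nu = \log\lambda$. By Proposition~\ref{prop:inv measure}, $\nu$ is supported on $\Delta^\infty$, is $F$-invariant, ergodic, and arises as the limit $\nu(f) = \lim_n \lambda^{-n}\int_{\Delta^n} f\,d\mu$. The strategy is to induce: $\nu_0 = \nu|_{\Delta_0}/\nu(\Delta_0)$ is $F^R$-invariant, and by Abramov's formula $h_\nu(F) = \nu(\Delta_0) h_{\nu_0}(F^R)$ while $\int_\Delta \log JF\,d\nu = \nu(\Delta_0)\int_{\Delta_0}\log JF^R\,d\nu_0$ (since $JF=1$ off the top of each column, the Jacobian cocycle over a full return is just $JF^R$). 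So it suffices to show $h_{\nu_0}(F^R) - \int \log JF^R\,d\nu_0 = \frac{1}{\nu(\Delta_0)}\log\lambda$, equivalently that $\nu_0$ is the equilibrium state for the potential $-\log JF^R$ for the full-shift-like map $F^R$ on $\Delta_0$. Here I would invoke the Gibbs property of $\nu_0$ asserted before the theorem (Proposition~\ref{prop:tower equilibrium}): on an $n$-cylinder $C$ for $F^R$ one has $\nu_0(C) \asymp \lambda^{-\tau_n(C)} m(C)$ up to distortion constants, where $\tau_n$ is the total return time. Combined with the distortion estimate~\eqref{eq:distortion2} relating $m(C)$ to the Jacobian $g_{\tau_n}$, a direct computation of $-\frac1n\sum \nu_0(C)\log\nu_0(C)$ and of $\int \log JF^R \, d\nu_0$ via Birkhoff/Shannon–McMillan–Breiman yields the equality. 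The contribution $\frac{1}{\nu(\Delta_0)}\log\lambda$ appears because the average return time $\int R\,d\nu_0 = 1/\nu(\Delta_0)$ multiplies the per-step escape rate $\log\lambda$.

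For uniqueness, suppose $\eta \in \M_F$ is nonsingular and also attains the supremum. Then equality holds throughout the entropy inequality above, which forces $\eta$ to be a Gibbs measure for the normalized potential $\psi$ — concretely, $h_\eta(F) + \int\psi\,d\eta = 0$ with $\psi$ normalized forces (by the equality case of the partition entropy inequality) that the conditional measures of $\eta$ on $F^{-1}x$ are proportional to $e^{\psi(y)}$, i.e.\ $d\eta/d(\eta\circ F) = \vf\circ F \cdot JF^{-1} \cdot \vf^{-1} \cdot \lambda^{-1}$ a.e. Nonsingularity lets me run this backward: $\eta$ must be absolutely continuous with respect to the conditional structure dictated by $\vf$, and iterating the relation together with the pullback characterization of $\nu$ in Proposition~\ref{prop:inv measure} pins down $\eta = \nu$. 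I expect the main obstacle to be precisely this uniqueness step: I need to be careful that the equality case of the Gibbs variational principle genuinely holds in the countable-alphabet, non-compact setting with an unbounded potential, and that nonsingularity (rather than, say, a summability hypothesis on $\eta$) is exactly the right condition to rule out other maximizers supported on the escaping orbits — this is where assumptions (P1)–(P3) and the spectral gap from Proposition~\ref{prop:spectral picture} must be leveraged to control the tail levels of the tower.
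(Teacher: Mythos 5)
Your lower-bound argument (induce to $S = F^R$ on $\Delta_0$, use the Gibbs property of $\nu_0$, and transport back via Abramov's formula and the identity $\int_{\Delta_0}\log JS\,d\eta_0 = \int_\Delta\log JF\,d\eta\cdot\int_{\Delta_0}R\,d\eta_0$) is essentially the paper's route. Your upper-bound argument, however, is a different route, and it has a gap you partially acknowledge but do not close. You attempt to prove $h_\eta(F) + \int\psi\,d\eta \le 0$ for the normalized potential $\psi = -\log JF + \log\vf - \log(\vf\circ F) - \log\lambda$ directly on the tower map $F$ with the full countable partition $\Z$. The coboundary cancellation $\int(\log\vf - \log\vf\circ F)\,d\eta = 0$ is only valid when $\log\vf \in L^1(\eta)$, which is not automatic since $\vf|_{\Delta_\ell} \sim \lambda^{-\ell}$ makes $\log\vf$ unbounded; and your remark that a truncation argument would fix this is not a proof — you would need to verify that the truncated coboundary error and the tail contribution to $\int\log JF\,d\eta$ interact with the right sign, which is precisely the kind of bookkeeping that the induced formalism avoids. (Note also that $\log\vf$ is bounded \emph{below}, so the danger is $\int\log\vf\,d\eta = +\infty$, not $-\infty$.) More fundamentally, the entropy inequality $h_\eta + \int\psi\,d\eta \le 0$ and its equality case in the countable-alphabet, non-compact setting is not "the easy half"; establishing it rigorously amounts to re-deriving Sarig's variational principle for countable Markov shifts.

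The uniqueness step is the genuine gap, and you correctly identify it as such. What you describe — running the equality case of Jensen's inequality backward to recover the conditional measures of $\eta$, then pinning down $\eta = \nu$ via nonsingularity — is the right heuristic, but proving it for a countable-alphabet, non-uniformly bounded potential is exactly the content of the thermodynamic formalism for countable Markov shifts. The paper avoids these difficulties entirely by working with the induced map $S = F^R$ on $\Delta_0 \cap \Delta^\infty$: after proving (via the transfer-operator estimates, Lemmas~\ref{lem:preimages} and~\ref{lem:nu bounds}) that $\nu_0$ is a Gibbs measure for $S$ with the locally H\"older potential $\phi = -\log(\lambda^R JS)$, it invokes Sarig's Theorems 3, 7, and 8 \cite{sarig}, which simultaneously give the variational principle $P_G(\phi) = 0 = \sup_{\eta_0\in\M_S}\{h_{\eta_0}(S) + \int\phi\,d\eta_0\}$ \emph{and} the uniqueness of the nonsingular maximizer. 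The passage back to $F$ is then mechanical via Abramov and the Jacobian identity. Your sketch of the lower bound already uses all of these pieces, so the efficient path is to push the upper bound and uniqueness through the same inducing machinery rather than trying to argue directly on the tower.
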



\subsection{Applications to Specific Dynamical Systems}

We apply the results about abstract towers with holes to two specific
classes of dynamical
systems with holes: $C^{1+\alpha}$
piecewise expanding maps of the interval and
locally $C^2$ multimodal Collet-Eckmann maps with singularities.


\subsubsection{Piecewise Expanding Maps of the Interval}
\label{exp results}

By a piecewise expanding map of the unit interval $\I$, we mean
a map $\T:\I \circlearrowleft$ satisfying the following properties.
There exists a partition of $\I$ into finitely many intervals, $\I_j$,
such that (a) $\T$ is $C^{1+\alpha}$ and monotonic on each $\I_j$
for some $\alpha>0$; and (b) $|\T'| \geq \tau > 2$.
Note that we can always satisfy (b) if $|\T'| \geq 1+\ve$ by
considering a higher iterate of $\T$.

Let $\I^n_j$ denote the intervals of monotonicity for $\T^n$.
The uniform expansion of $\T$ implies the following familiar distortion bound:  there exists a constant
$C_3 >0$ such that for any $n$, if $x$ and $y$ belong to the same $\I^n_j$, then
$\displaystyle
\left| \frac{(\T^n)'(x)}{(\T^n)'(y)} - 1 \right| \leq C_3 |\T^n(x) - \T^n(y)|^\alpha$.

\bigskip
\noindent
{\bf Introduction of Holes.}
A hole $\tH$ in $[0,1]$ is a finite union of open intervals $\tH_j$.
(We use the $\, \tilde{}\,$ to distinguish from the hole on the tower.)
Let $I = \I \backslash \tH$
and for $n \geq 0$, define $I^n = \cap_{i=0}^n \T^{-i}I$.  We are interested in studying the
dynamics of $T^n := \T^n|_{I^n}$.

Let $\gamma$ be the length of the shortest interval of monotonicity of $T$.
Our sole condition on the hole is

\bigskip
\noindent
\parbox{.1 \textwidth}{\bf(H2)}
\parbox[t]{.8 \textwidth}{ $\displaystyle
\tm(\tH) \leq \frac{\gamma^2 (1-\beta) (\tau - 2\beta^{-1})}{1+C_3}$}
\medskip

\noindent
where $\tm$ is Lebesgue measure on $\I$ and
$\beta > \max \{ 2\tau^{-1}, \tau^{-\alpha} \}$.

The following theorem is proved in \cite{demers exp}.

\begin{thma}(\cite{demers exp})
\label{thm:tower existence}
Let $T$ be a $C^{1+\alpha}$ piecewise expanding map of the interval
and let $\tH$ be a hole satisfying the bound given in
(H2).  Then $(T,I)$ admits a tower $(F, \Delta)$ which satisfies properties
(P1)-(P3) and (H1) of Section~\ref{holes}
as well as (A1) of Section~\ref{general approach}
with $\theta = \frac{2}{\tau}$, $C_1 = C_3$, $c_0 = \gamma$ and $C_2 =1$.

If in addition, a transitivity condition is satisfied,
then there is a unique conditionally invariant density
$\vf \in \B$ with eigenvalue $\lambda$.
\end{thma}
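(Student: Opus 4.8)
The plan is to build the Young tower $(F,\Delta)$ directly from $(T,I)$ by a ``first full return, or first fall into the hole'' construction, and then to verify the abstract hypotheses (P1)--(P3), (H1) and (A1) one at a time, reading off the constants $\theta=2/\tau$, $C_1=C_3$, $c_0=\gamma$, $C_2=1$ as we go. First I would fix a reference interval $\Delta_0\subset\I$ contained in a monotonicity interval of $T$ and disjoint from $\tH$ (such an interval exists once $\tm(\tH)$ is small, which is guaranteed by (H2)). Following the forward orbits of the monotonicity intervals of $T$ meeting $\Delta_0$, let $R$ be the first time an iterate $T^R$ maps a maximal monotone piece either onto a union of finitely many fixed reference pieces (a ``full return to $\Delta_0$'') or into $\tH$ (a ``fall into the hole''), let $\Z_0$ be the partition of $\Delta_0$ into the corresponding domains, and truncate the tower above a piece that falls into the hole, exactly as in Section~\ref{holes}. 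The key point that $R<\infty$ $m$-a.e.\ with the Markov structure of the returns intact is precisely the uniform expansion $|T'|\ge\tau>2$: a piece that has not yet returned has image shorter than the shortest monotonicity interval, and each further application of $T$ either expands its length by a factor at least $\tau$ or cuts it at one of the finitely many boundary points of the monotonicity partition --- and since a short image meets at most two of those intervals, each un-returned piece has at most two children --- so after $\lceil\log_\tau\gamma^{-1}\rceil$ more steps every surviving piece has grown past length $\gamma$ and returns. This also yields (P1) with $\theta=2/\tau$: the measure of the level-$n$ part of the tower is multiplied by at most $\tau^{-1}$ per step (from the expansion) and by at most $2$ (from the possible splitting), and $2/\tau<1$ by hypothesis.

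The remaining structural conditions are then routine. For (P2): since $|T'|>1$ and the monotonicity partition is finite, any two distinct points of $\Delta$ are separated in finite time, so $s(x,y)<\infty$. For (P3): a full return maps its domain monotonically onto a union of elements of a partition $\Z_0^{im}$ generated by the images of the finitely many branches of $T$ meeting $\Delta_0$, subdivided by the finitely many endpoints of $\tH$ and of the monotonicity partition; hence $\Z_0^{im}$ is finite, and a suitable choice of $\Delta_0$ makes $c_0=\min_{Z'}m(Z')=\gamma$. The distortion bound \eqref{eq:distortion} on the tower is inherited from the distortion bound for $T$, giving $C_1=C_3$, and the $C^{1+\alpha}$ regularity of $T$ gives $\mathrm{Lip}(\log g)<\infty$ together with the compatibility condition (A1) of Section~\ref{general approach} with $C_2=1$.

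The one step where real work is required is verifying the hole condition (H1). Writing $H_\ell$ for the projection to $\I$ of the level-$\ell$ part of the tower hole, $H_\ell$ is a disjoint union of monotone pieces $J$ of $T^\ell$ with $T^\ell(J)\subset\tH$; combining the distortion bound, the lower bound $|(T^\ell)'|\ge\tau^\ell$, the bound of at most $2^{\ell-1}$ on the number of un-returned branches at level $\ell$, and the fact that an un-returned image meets at most two monotonicity intervals of $T$, one obtains an estimate of the form
\[
m(H_\ell)\;\le\;(1+C_3)\,c_0^{-1}\,\tm(\tH)\,\Bigl(\tfrac{2}{\tau}\Bigr)^{\ell-1},\qquad \ell\ge1,
\]
so that, because $\beta>2\tau^{-1}$,
\[
q\;=\;\sum_{\ell\ge1}m(H_\ell)\,\beta^{-(\ell-1)}\;\le\;\frac{(1+C_3)\,\tm(\tH)}{\gamma}\sum_{\ell\ge0}\Bigl(\tfrac{2}{\tau\beta}\Bigr)^{\ell}\;=\;\frac{(1+C_3)\,\tm(\tH)}{\gamma}\cdot\frac{\tau\beta}{\tau\beta-2}.
\]
Comparing this with the requirement (H1), namely $q<(1-\beta)c_0/(1+C_1)=(1-\beta)\gamma/(1+C_3)$, and solving for $\tm(\tH)$ produces a smallness condition of exactly the shape of (H2): the product of $c_0=\gamma$ with the factor $c_0^{-1}=\gamma^{-1}$ in the estimate for $m(H_\ell)$ is responsible for the $\gamma^2$, and the denominator $\tau\beta-2$ for the factor $\tau-2\beta^{-1}$. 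I expect this geometric-series estimate to be the main obstacle: the delicate part is the bookkeeping behind the factor $2$ (monotonicity cuts versus expansion) and arranging the constants so that the hypothesis one actually needs is exactly (H2) rather than something more restrictive.

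Finally, the last assertion follows immediately from the spectral theory already in place. Under the stated transitivity condition $(F,\Delta)$ has a single transitive component, so Proposition~\ref{prop:spectral picture}(ii) (respectively (i) in the mixing case) applies: the spectral radius of $\Lp$ on $\B$ is a number $\lambda>\beta$, attained only at the simple eigenvalues $\{\lambda e^{2\pi i k/p}\}_{k=0}^{p-1}$, and there is a unique probability density $\vf\in\B$ with $\Lp\vf=\lambda\vf$. By the computation preceding Proposition~\ref{prop:lasota-yorke}, $\Lp\vf=\lambda\vf$ is equivalent to $d\mu=\vf\,dm$ being an \accim\ for $(F,\Delta)$ with eigenvalue $\lambda$, which is the claim.
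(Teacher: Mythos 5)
Your outline tracks the construction of \cite{demers exp}, which is the proof the paper is actually leaning on (Theorem A is cited, not reproved here; the short account in Section~\ref{expanding} describes exactly the reference-interval-plus-full-returns construction you give, with $\Lambda$ a single monotonicity interval, $\Z_0^{im}=\{\Lambda\}$, and new pieces created only at monotonicity boundaries, hole boundaries, and return boundaries). So the route is right. Two places, however, don't close as written, and they are precisely the parts of the statement that pin down the advertised constants.

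First, the branching bound behind $\theta=2/\tau$. You justify ``at most two children per step'' purely from the short image meeting at most two monotonicity intervals, but you never account for cuts made at $\partial\tH$. The construction must cut a piece whenever its image straddles a component of $\tH$ (the subpiece landing inside $\tH$ becomes a stopped hole-cylinder; the pieces to either side continue), so in a single step an un-returned piece can, in principle, acquire children from a monotonicity boundary \emph{and} from a hole boundary. One needs either an argument that these two kinds of cuts cannot co-occur on a piece that is still short, or a more careful combinatorial count showing the number of continuing pieces at depth $\ell$ is still $O(2^{\ell})$. As stated, the ``at most two children'' claim is not established, and without it neither (P1) with $\theta=2/\tau$ nor your later count of un-returned branches is available.

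Second, the (H1)--(H2) bookkeeping. Your bound $m(H_\ell)\le (1+C_3)c_0^{-1}\,\tm(\tH)\,(2/\tau)^{\ell-1}$ leads, by your own computation, to the requirement
\[
\tm(\tH)\ <\ \frac{(1-\beta)\gamma^2(\tau-2\beta^{-1})}{(1+C_3)^2\,\tau},
\]
which is stronger than (H2) by the factor $(1+C_3)\tau$. So as written you have verified (H1) only under a strictly smaller hole than the theorem allows, not under (H2) itself. The source of the slack appears to be the gratuitous $(1+C_3)c_0^{-1}$ in your $m(H_\ell)$ bound: to estimate the preimage measure of a set contained in $\tH$ one only needs the crude derivative bound $|(T^\ell)'|\ge\tau^\ell$, not the two-sided distortion estimate of the form \eqref{eq:distortion2}; using distortion here (which carries the $(1+C_1)c_0^{-1}$ coefficient) double-charges you. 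With $m(H_\ell)\le N_\ell\,\tau^{-\ell}\,\tm(\tH)$ and the (still to be justified) count $N_\ell\le 2^{\ell-1}$, one gets $q\le\tm(\tH)/(\tau-2\beta^{-1})$, and then (H2), with its $\gamma^2(1+C_3)^{-1}$ margin, closes comfortably. So the fix is to drop the distortion constant from this estimate and to supply the honest count $N_\ell$; as it stands the proof establishes the theorem only under a more restrictive hypothesis than claimed. (The final paragraph, deducing the existence of the unique conditionally invariant density from Proposition~\ref{prop:spectral picture} and the $\accim\!\Leftrightarrow\!$eigenfunction equivalence, is fine.)
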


In order to eliminate periodicity and ensure transitivity for the map $T$
and for the tower, we
can impose the following transitivity condition.

\medskip
\noindent
\parbox{.1 \textwidth}{\bf(T1)}
\parbox[t]{.8 \textwidth}{Let $J$ be an interval of monotonicity for $T$.
There exists an $n_1 > 0$ such that $T^{n_1}J$ covers $I$ up to finitely many
points. }

\medskip
\noindent
Property (T1) is analogous to the covering property
for piecewise expanding maps of the interval without holes which is a
necessary and sufficient condition
for the existence of a unique absolutely continuous invariant measure
whose density is bounded away from zero (see \cite{liverani}).

Fix $\bar \alpha \geq - \log \beta/ \log \tau$ and if necessary, choose
$\beta$ closer to $1$ so that $\bar \alpha \leq \alpha$.
Let $I^\infty$ denote the set of points which never escape from $I$ and
define $\G = \{ \tf \in C^{\bar \alpha}(I): \tf>0 \mbox{ on } I^\infty \}$.
Denote by $\Lp_T$ the transfer operator of $T$ with respect to $\tm$ and
let $|\cdot |_1$ denote the $L^1(\tm)$-norm.
We prove the following theorem in Section~\ref{expanding}.

\begin{theorem}
\label{thm:exp convergence}
Let $T$ satisfy the hypotheses of Theorem A in
addition to condition (T1).

There exists $\lambda > 0$ such that for all $\tf \in \G$,
the escape rate with respect to $\teta = \tf \tm$
is well-defined and equal to $-\log \lambda$, i.e.,
\[
\lim_{n\to \infty} \frac{1}{n} \log \teta( I^n) = \log \lambda .
\]

There exists a unique \accim\ $\tmu$ with density $\tp$ and eigenvalue
$\lambda$ such that for all $\tf \in \G$, we have
\[
\left| \frac{\Lp_T^n \tf}{|\Lp_T^n \tf|_1} - \tp \right|_1
\leq C_T |\tf|_{C^{\bar \alpha}} \sigma^n
\]
for some $\sigma \in (\beta,1)$ and $C_T$ depends only on the smoothness
and distortion of the map $T$.
\end{theorem}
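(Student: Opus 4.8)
The plan is to transfer the abstract tower results (Corollary~\ref{cor-exp}, Proposition~\ref{prop:convergence}, and Proposition~\ref{prop:inv measure}) to the interval map $T$ using the tower $(F,\Delta)$ provided by Theorem~A, together with a projection map $\pi: \Delta \to I$ that semiconjugates $F$ to $T$ (i.e.\ $\pi \circ F = T \circ \pi$) and pushes $m$ forward to $\tm$. First I would recall the construction of $\pi$ from \cite{demers exp}, verify that condition (T1) implies the tower is mixing (so that Proposition~\ref{prop:spectral picture}(i) applies with a simple leading eigenvalue $\lambda$ and a density $\vf$ bounded above and below by $\delta^{\pm 1}\lambda^{-\ell}$ on $\Delta_\ell$), and check that $\lambda$ is independent of the choice of reference set used to build the tower, so that ``$\lambda$'' in the statement is well-defined. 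The \accim\ $\tmu$ on $I$ is then defined as $\tmu = \pi_*\mu$ where $d\mu = \vf\, dm$, and its density $\tp$ with respect to $\tm$ is obtained by the standard formula $\tp(x) = \sum_{\pi(y) = x} \vf(y)\, \frac{d(\pi_*m)}{d\tm}$-type disintegration; the exponential tail (P1) and the bounds on $\vf$ guarantee $\tp \in L^1(\tm)$ and $\tp > 0$ on $I^\infty$. The eigenvalue relation $\Lp_T \tp = \lambda \tp$ follows because $\pi$ intertwines the transfer operators: $\Lp_T \circ \pi_* = \pi_* \circ \Lp_F$ on densities.

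The core of the argument is the convergence statement. The key step is to show that for every $\tf \in \G = \{\tf \in C^{\bar\alpha}(I) : \tf > 0 \text{ on } I^\infty\}$, the lift $f := \tf \circ \pi$ lies in $\B$ (this uses $\bar\alpha \geq -\log\beta/\log\tau$, which ensures that H\"older continuity in the Euclidean metric on $I$ transfers to Lipschitz continuity in the symbolic metric $d(x,y) = \beta^{s(x,y)}$ on $\Delta$, since points in the same $n$-cylinder on the tower map to nearby points on $I$ at geometric rate $\tau^{-n}$), and that moreover $c(f) > 0$. For the latter I would invoke Proposition~\ref{prop:convergence criterion} (referenced in the remark after Proposition~\ref{prop:convergence}), which should guarantee $c(g) > 0$ for densities bounded away from zero on $\Delta^\infty$; since $\tf > 0$ on $I^\infty$ and $\pi(\Delta^\infty) = I^\infty$, the lift $f$ is bounded below on $\Delta^\infty$, modulo the usual care that $\pi^{-1}(I^\infty)$ may be slightly larger than $\Delta^\infty$ — this is handled by the standard observation that the extra points project into the hole eventually and hence contribute nothing in the limit. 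Given $c(f) > 0$, Proposition~\ref{prop:convergence} gives $\Lp_F^n f / |\Lp_F^n f|_1 \to \vf$ in $\|\cdot\|$ at rate $\sigma^n$; pushing forward by $\pi_*$ and using $|\pi_* h|_1 = |h|_1$ (since $\pi_* m = \tm$) together with $\Lp_T^n(\tf) = \pi_*(\Lp_F^n f)$, I get $\Lp_T^n \tf / |\Lp_T^n \tf|_1 \to \tp$ in $L^1(\tm)$ at rate $\sigma^n$, which is the displayed estimate; the constant $C_T$ absorbs the operator norm of $\pi_*: \B \to L^1(\tm)$ and the embedding constant of $C^{\bar\alpha}(I)$ into $\B$, both of which depend only on smoothness and distortion of $T$.

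For the escape rate formula, I would write $\teta(I^n) = \int_{I^n} \tf \, d\tm = \int_{\Delta^n} f \, dm$ (using that $\pi$ maps $\Delta^n$ onto $I^n$ up to $m$-null sets), and then compare $\int_{\Delta^n} f\, dm$ with $\lambda^n \int_\Delta f\, d\mu$ divided by $\vf$-weights: more directly, since $c(f) > 0$, Corollary~\ref{cor-exp} gives $\lambda^{-n}|\Lp_F^n f|_1 \to c(f) > 0$, and $|\Lp_F^n f|_1 = \int_{\Delta^n} f\, dm$ by the defining property of the transfer operator for the open system (the total mass lost is exactly the mass of $f$ on the part that has fallen in the hole), hence $\frac{1}{n}\log \teta(I^n) = \frac{1}{n}\log|\Lp_F^n f|_1 \to \log\lambda$. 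The main obstacle I anticipate is the bookkeeping around the projection $\pi$ near the hole: one must check carefully that $\pi(\Delta^n)$ and $I^n$, and $\pi(\Delta^\infty)$ and $I^\infty$, agree up to sets of measure zero (the tower's hole $H$ is built to be the $\pi$-preimage of $\tH$, but returns times can be unbounded, so some points take a long time to be ``seen'' falling into $\tH$), and that the lift of a strictly-positive-on-$I^\infty$ function is genuinely in the good cone of densities on the tower — this is where the specific construction in \cite{demers exp} must be used rather than the abstract framework alone.
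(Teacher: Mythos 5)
There is a genuine gap in the transfer from the tower to the interval. You take the lift $f := \tf\circ\pi$, note that it lies in $\B_0$ (this much is Lemma~\ref{lem:lift}), and then write $\Lp_T^n(\tf)=\pi_*(\Lp_F^n f)$. But the intertwining relation that actually holds is \eqref{eq:transfer commute}, $\ppi(\Lp_F^n f)=\Lp_T^n(\ppi f)$, and $\ppi(\tf\circ\pi)\neq\tf$ in general, since $\ppi$ sums over fibres and divides by $J\pi$: $\ppi(\tf\circ\pi)(u)=\tf(u)\sum_{x\in\pi^{-1}u}J\pi(x)^{-1}$, which is not $\tf(u)$ unless each point has a single preimage with unit Jacobian. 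The same issue breaks your escape-rate step, because $\int_{\Delta^n}(\tf\circ\pi)\,dm\neq\int_{I^n}\tf\,d\tm$ for the same reason: the correct identity is $|\Lp_F^n f|_{L^1(m)}=|\Lp_T^n(\ppi f)|_{L^1(\tm)}$, valid only when $\ppi f=\tf$.

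What is missing is precisely the content of property~(A2) and Proposition~\ref{prop:continuous}: one must build a \emph{compatible} lift $f\in\B_0$ with $\ppi f=\tf$, which is done by restricting to finitely many levels of the tower (using (T1) to cover $I$ by projections of the first $N$ levels), exploiting that overlaps $\pi(Z_{\ell,j})\cap\pi(Z_{\ell',j'})$ are only finitely many, choosing partitions of unity $\{\rho_i\}$ on overlapping projections, and setting $f|_{Z_{\ell_i,j_i}}(x)=\tf(\pi x)\,J\pi(x)\,\rho_i(\pi x)$ and $f=0$ elsewhere. For this to land in $\B_0$ one needs (A2)(c), i.e.\ a uniform bound on $J\pi_{\ell,j}$ and its Lipschitz constant, which the paper verifies via $J\pi(F^\ell x)=|(T^\ell)'(\pi x)|$ and the fact that only $\ell\le N$ levels matter. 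With that compatible $f$ in hand your subsequent steps (the lower bound $\nu(f)>0$ via $f>0$ on $\Delta^\infty\cap\Delta_0$ and $\nu(\Delta_0)>0$, the application of Proposition~\ref{prop:convergence}/\ref{prop:convergence criterion}, and the escape-rate conclusion from $\lambda^{-n}|\Lp_F^n f|_1\to c(f)>0$) are all correct and match the paper, so the repair is local: replace the naive lift by the compatible one and justify $J\pi|_{\Z_J}\in\B_0$.
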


\begin{remark}
The results of Theorems A and \ref{thm:exp convergence}
generalize those obtained in \cite{chernov bedem} and \cite{liverani maume}
for $C^2$ expanding maps using bounded variation techniques.
We make no assumptions on the position of the holes, only their measure.
\end{remark}


\subsubsection{Collet-Eckmann maps with singularities}
\label{CE results}

Collet-Eckmann maps are interval maps with critical points such that
the derivatives $D\T^n$ at the critical values increase exponentially.
We will follow the approach of \cite{DHL} which allows for
discontinuities and points with infinite derivative as in Lorenz maps.
(We try to use the same notation as in \cite{DHL}, but adding a ${}^*$
if there is a clash with our own notation.)
The map $\T:\I \to \I$ is locally $C^2$ and has a critical set
$\Crit = \Crit_c \cup \Crit_s$ consisting respectively of
genuine critical points $c$ with critical order $1 < \ell_c < \infty$
and singularities with critical order $0 < \ell_c \leq 1$.
At each of these points $\T$ is allowed to have a discontinuity as well,
so $c \in \Crit$ has a left and right critical order
which need not be the same.
Furthermore, $\T$ satisfies the following
conditions for all $\delta > 0$ (where $B_\delta(\Crit) = \cup_{c \in \sCrit} B_\delta(c)$ is a $\delta$-neighborhood of $\Crit$):

\begin{enumerate}
  \item[{\bf(C1)}]  {\em Expansion outside $B_\delta(\Crit)$: }  There exist
$\lambda^* > 0$ and $\kappa > 0$ such that for every $x$ and $n \geq 1$ such that
$x_0 = x, \dots, x_{n-1} = \T^{n-1}(x) \notin B_\delta(\Crit)$, we have\footnote{The addition of exponent $\ell_{\max}-1$ for $\delta$ in this formula is a correction to \cite[Condition (H1)]{DHL}, which affects the proofs only in the
sense that some constants will be different. The formula in \cite{DHL} cannot be realized for any $x$ at distance $\delta$ to any critical point $c \in \Crit_s$ with $\ell_c > 2$.}
\[
|D\T^n(x)| \geq \kappa \delta^{\ell_{\max}-1} e^{\lambda^* n},
\]
where $\ell_{\max} = \max\{ \ell_c : c \in \Crit_c\}$.
Moreover, if  $x_0 \in \T(B_\delta(\Crit))$ or $x_n \in B_\delta(\Crit)$,
then we have
\[
|D\T^n(x)| \geq \kappa e^{\lambda^* n}.
\]
\item[{\bf(C2)}]  {\em Slow recurrence and derivative growth along critical orbit: } There exists $\Lambda^* > 0$ such that for all $c \in \Crit_c$
there is $\alpha^*_c \in (0,\Lambda^*/(5\ell_c))$ such that\footnote{The fact that $\alpha^*_c$ depends on $c$ in this way is a correction to \cite[Condition (H2)]{DHL}, where this is not stated, but used in the proof of Lemma 2
of \cite{DHL}.}
\[
|D\T^k(\T(c))| \geq e^{\Lambda^* k}
\ \text{ and }\
\mbox{\textnormal{dist}}(\T^k(c), \Crit) > \delta e^{-\alpha^*_c k}
\quad \text{ for all } k \geq 1.
\]
\item[{\bf(C3)}]  {\em Density of preimages: }  There exists $c^* \in \Crit$
whose preimages are dense in $\I$, and no other critical point is among these preimages.
\end{enumerate}
Condition (C1) follows for piecewise $C^2$ maps from Ma\~n\'e's Theorem,
 see \cite[Chapter III.5]{dMvS}. The first half of condition
(C2) is the actual Collet-Eckmann condition, and the second half is a slow
recurrence condition. 
Condition (C3) excludes the existence of non-repelling periodic points.

In \cite{DHL}, $\alpha_c^*$ is assumed to be small relative to $\lambda^*$
and $\Lambda^*$.  We keep the same restriction on $\alpha_c^*$ and
do not need to shrink it further after the introduction of holes.

\bigskip
\noindent
{\bf Introduction of Holes.}
In order to apply the tower construction of \cite{DHL} to our setting,
we place several conditions
on the placement of the holes in the interval $\I$.
We adopt notation similar to that in Section~\ref{exp results}.

A hole $\tH$ in $\I$ is a finite union of open intervals $\tH_j$,
$j = 1, \ldots, L$.
Let $I = \I \backslash \tH$ and set $I^n = \bigcap_{i=0}^n \T^{-i}I$.
Define $T = \T|I^1$ and let $\tm$ denote Lebesgue measure on $\I$.

\medskip
\noindent
\parbox{.1 \textwidth}{\bf(B1)}
\parbox[t]{.8 \textwidth}{Let $\alpha^*_c>0$ be as in (C2).
For all $c \in \Crit_c$ and $k \geq 0$,
\[
\mbox{\textnormal{dist}}(\T^k(c), \partial \tH) > \delta e^{-\alpha^*_c k}.
\]
}

\medskip
\noindent
Our second condition on $\tH$ is that the positions of its connected
components are generic with respect to
one another.  This condition will also double as a transitivity condition
on the constructed tower
which ensures our conditionally invariant density will be bounded away from
zero.  In order to
formulate this condition, we need the following fact about
$C^2$ nonflat nonrenormalizable maps satisfying (C1)-(C2).
\begin{equation}
\label{eq:no holes mixing}
\begin{array}{l}
\text{For all $\delta>0$ there exists $n = n(\delta)$ such that for all
intervals $\omega \subseteq \I$ with }
|\omega| \geq \frac{\delta}{3}, \\[0.1cm]
\quad\ (i)\ \T^n\omega \supseteq \I, \text{ and }  \\[0.1cm]
\quad (ii)\ \text{there is a subinterval $\omega' \subset \omega$ such that
$\T^{n'}$ maps $\omega'$ diffeomorphically }\\[0.01cm]
\quad \quad\ \text{ onto }(c^*-3\delta, c^*+3\delta) \text{ for some } 0 < n' \leq n.
\end{array}
\end{equation}
We also need some genericity conditions on the placement of the components
of the hole.
Within each component $\tH_j$, we place an artificial critical point
$b_j$, so $\Crit_{\mbox{\tiny hole}} = \{ b_1, \dots, b_L \}$.
The points $b_j$ are positioned so that the following holds:
\medskip

\noindent
\parbox{.07 \textwidth}{\bf(B2)}
\parbox[t]{.9 \textwidth}{(a) $\text{orb}(b_j) \cap c = \emptyset$ for all
$1 \leq j \leq L$ and $c \in \Crit_{\mbox{\tiny hole}} \cup \Crit_c$. \\ [0.1cm]
(b) Let $\T^{-1} (\T b_j) = \cup_{i=1}^{K_j} g_{j,i}$.
For all $j,k \in \{1, \ldots, L\}$, there exists $i \in \{1, \ldots, K_j \}$
such that
$\T^\ell b_k \neq g_{j,i}$ for
$1 \leq \ell \leq n(\delta)$.}
\medskip

\noindent
Here $n(\delta)$ is the integer corresponding to $\delta$ in
\eqref{eq:no holes mixing} and $\delta$ is chosen so small
that: (i) all points in $\Crit_c \cup \Crit_s \cup \Crit_{hole}$
are at least $\delta$ apart, and (ii) for each $j = 1, \dots, L$,
there is $\tau = \tau(j) \geq 1$ such that
\begin{equation}\label{eq:tau}
|D\T^{\tau}(x)| \geq \max\{ \kappa e^{\lambda^* \tau}, 6 \}
\quad \text{ for all } x \in B_\delta(b_j).
\end{equation}
Condition (C1) implies that
$|D\T^{\tau}(x)| \geq \kappa e^{\lambda^* \tau}$ whenever
$x \notin B_\delta(\Crit_c)$ and
$T^{\tau}(x) \in B_\delta(\Crit_c)$, so by taking $\delta$ small,
and using assumption (B2)(a), we can indeed find $\tau$ such that
also $|D\T^{\tau}(x)| \geq 6$.

As before, let $I^\infty$ denote the set of points which never escapes from $I$ and
let $\G = \{ \tf \in C^{\bar \alpha}(I): \tf>0 \mbox{ on } I^\infty \}$.
We prove the following theorem in Section~\ref{logistic}.

\begin{theorem}
\label{thm:CE convergence}
Let $\T$ be a nonrenormalizable map satisfying conditions (C1)-(C3) and let $\tH$ be
a sufficiently small hole satisfying (B1)-(B2).

There exists $\lambda > 0$ such that for all $\tf \in \G$,
the escape rate with respect to $\teta = \tf \tm$
is well-defined and equal to $-\log \lambda$, i.e.,
\[
\lim_{n\to \infty} \frac{1}{n} \log \teta( I^n) = \log \lambda .
\]

Moreover, there exists a unique \accim\ $\tmu$ with density $\tp$ and eigenvalue
$\lambda$ such that for all $\tf \in \G$, we have
\[
\left| \frac{\Lp_T^n \tf}{|\Lp_T^n \tf|_1} - \tp \right|_1
\leq C_T |\tf|_{C^{\bar \alpha}} \tilde \sigma^n
\]
for some $\tilde \sigma \in (\beta,1)$ and $C_T$ depends only on the smoothness
and distortion of the map $T$.
\end{theorem}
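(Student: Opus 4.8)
The plan is to follow the two–stage scheme already used for Theorem~\ref{thm:exp convergence}: first build a Young tower with holes for $(T,I)$ to which the abstract results of Section~\ref{quasi-compact} apply, and then transfer the conclusions down to $\I$ through the natural semiconjugacy.

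\emph{Stage 1 (the tower).} I would run the Collet--Eckmann tower construction of \cite{DHL}, modified as prescribed in Section~\ref{holes}. Take $\Delta_0$ to be a small interval around a preimage of the dense-preimage point $c^*$ of (C3); \cite{DHL} then produces, for the closed map $\T$, a return time $R$ with bounded distortion and exponential tails. Redefine $R$ so that an orbit is cut off as soon as it meets $\tH$. The artificial critical points $b_j\in\tH_j$ let one treat each hole component like a critical neighbourhood in the \cite{DHL} scheme, with (B1) playing for $\partial\tH$ the role that the slow-recurrence half of (C2) plays for $\Crit_c$, and \eqref{eq:tau} supplying a definite expansion after any passage near $\tH$; consequently the distortion and tail estimates of \cite{DHL} go through (up to the constant adjustments noted in the footnotes to (C1)--(C2)). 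This gives (P1); (P2) is immediate from hyperbolicity; (P3) follows from \eqref{eq:no holes mixing}; and condition (A1) of Section~\ref{general approach} is inherited as in Theorem~\ref{thm:tower existence}. For (H1): with $\beta$ fixed in its admissible range, the measure $m(H_\ell)$ of the levels lying above $\tH$ is bounded by $\tm(\tH)$ times the exponentially small $R$-tail, so $q\to0$ as $\tm(\tH)\to0$, and a sufficiently small hole satisfies (H1). Finally (B2), together with \eqref{eq:no holes mixing}, is designed exactly so that no periodicity survives among the finitely many sets in $\Z_0^{im}$; hence the tower is mixing.

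\emph{Stage 2 (transfer to $\I$).} Proposition~\ref{prop:spectral picture}(i) now gives a simple leading eigenvalue $\lambda\in(\beta,1)$ of $\Lp=\Lp_F$ with strictly positive eigendensity $\vf\in\B$ satisfying $\delta\lambda^{-\ell}\le\vf\le\delta^{-1}\lambda^{-\ell}$, while Corollary~\ref{cor-exp} and Proposition~\ref{prop:convergence} give $\Lp_F^nf/|\Lp_F^nf|_1\to\vf$ in $\|\cdot\|$ at a rate $\tilde\sigma\in(\beta,1)$ for every $f\in\B$ with $c(f)>0$. Let $\pi:\Delta\to\I$ be the projection, so $\pi\circ F=T\circ\pi$ on $\Delta^1$ and $\pi$ intertwines the transfer operators in the sense that $\pi_*((\Lp_F^n f)\,m)=(\Lp_T^n\hat f)\,\tm$ whenever $\pi_*(f\,m)=\hat f\,\tm$. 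Given $\tf\in\G$, lift it to $f\in\B$ with $\pi_*(f\,m)=\tf\,\tm$; such an $f$ exists and lies in $\B$ by the distortion bound \eqref{eq:distortion2} together with the comparison of $|\pi x-\pi y|^{\bar\alpha}$ with the tower metric $d(x,y)=\beta^{s(x,y)}$, which the choice of $\bar\alpha$ (small enough relative to $\beta$ and the local expansion of $T$) guarantees. Since $\tf>0$ on $I^\infty$, the lift $f$ is bounded below on $\Delta^\infty$, and $F$-mixing then forces $c(f)>0$ by (the argument behind) Proposition~\ref{prop:convergence criterion}; in particular $c(1)>0$. Setting $\tmu:=\pi_*(\vf\,m)$ and $\tp:=d\tmu/d\tm$, pushing the convergence through $\pi$ and using $|\Lp_F^nf|_1=|\Lp_T^n\tf|_1$ and $|\cdot|_1\le C\|\cdot\|$ on $\B$ yields
\[
\left|\frac{\Lp_T^n\tf}{|\Lp_T^n\tf|_1}-\tp\right|_1\le C_T\,|\tf|_{C^{\bar\alpha}}\,\tilde\sigma^n ,
\]
with $C_T$ absorbing the lift and comparison constants, which depend only on the smoothness and distortion of $T$. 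Uniqueness of the \accim\ $\tmu$ with eigenvalue $\lambda$ among densities bounded away from zero follows from the simplicity of $\lambda$. For the escape rate, for $\tf\in\G$ with lift $f$ we have $\teta(I^n)=\int_{I^n}\tf\,d\tm=\int_\I\Lp_T^n\tf\,d\tm$, which equals $|\Lp_T^n\tf|_1=|\Lp_F^nf|_1$ for all large $n$ because $\Lp_T^n\tf/|\Lp_T^n\tf|_1\to\tp$ with $\tp$ bounded below on the relevant part of $\I$; by Corollary~\ref{cor-exp}, $\lambda^{-n}|\Lp_F^nf|_1\to c(f)\in(0,\infty)$, so $\frac1n\log\teta(I^n)\to\log\lambda$.

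\emph{Main obstacle.} Everything after the tower is a routine application of Section~\ref{quasi-compact}; the real work is Stage 1, namely checking that the \cite{DHL} construction still runs after $\tH$ is excised. The delicate points are that (B1) is strong enough to prevent an orbit from shadowing $\Crit_c$ and $\partial\tH$ simultaneously in a way that would break the bounded-distortion and exponential-tail estimates, and that (H1) on the tower can be secured purely by shrinking $\tm(\tH)$; verifying the mixing consequence of (B2) via \eqref{eq:no holes mixing} is the remaining bookkeeping.
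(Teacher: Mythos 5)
Your plan reproduces the paper's overall two–stage scheme, and the Stage~1 outline matches the paper's adaptation of the \cite{DHL} construction (though the paper devotes Lemmas~\ref{lemma:good returns}, \ref{lemma:uniform decay} and~\ref{lemma:growth} to the details you summarize in one paragraph, and it verifies (H1) by a specific split of $\sum_\ell m(H_\ell)\beta^{-\ell}$ into pieces that meet $\tH$ while bound versus free, using (B1) for the former and the SRB density of the closed map for the latter — not just ``tail times $\tm(\tH)$''). Where the proposal actually breaks down is in Stage~2, and precisely at the step you describe as routine.

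\textbf{The gap.} You assert that the lift $f$ with $\ppi f=\tf$ ``exists and lies in $\B$''. This is false when $\Crit_s\neq\emptyset$, which is the whole point of allowing singularities. By equation~\eqref{eq:pi}, $J\pi(F^\ell x)=|(T^\ell)'(\pi x)|$, and near the forward orbit of a singular point $c\in\Crit_s$ (critical order $0<\ell_c<1$) this Jacobian is unbounded, so the lift $f=\tf\circ\pi\cdot J\pi$ fails to be in $\B$ — the paper states this explicitly (``Although $f\notin\B$\,\ldots''). Concretely, condition (A2)(c) fails, so Proposition~\ref{prop:continuous} cannot be invoked, and the abstract convergence machinery of Section~\ref{quasi-compact} does not apply directly to $f$. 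The paper's Step~5 circumvents this by decomposing $\tf=\tf_0+\tf_\ve$, where $\tf_\ve$ is supported on $\ve$-neighbourhoods of the problematic singular orbits: $\tf_0$ has a lift $f_0\in\B_0$ because $J\pi$ is bounded off those neighbourhoods, while the remainder is controlled via the H\"older inequality, the estimate $J\pi_{\ell,j}\approx e^{k(1-\ell_c)}$ on exponential partition elements $E_k^\pm$, and the cylinder escape-rate bound of Corollary~\ref{cor:uniform cyl}. Step~6 then makes $\ve=\ve(n)=e^{-tn}$ to obtain an exponential rate, balancing the growth $\|f_0\|\lesssim e^{tn(2-\ell_c^{\min})}$ against $\sigma^n$ by choosing $t$ small. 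None of this appears in the proposal, and the metric-comparison argument you cite for $f\in\B$ only controls the H\"older factor $\tf\circ\pi$, not the factor $J\pi$ in the lift. Your concluding claim that ``everything after the tower is a routine application'' has it backwards: Stage~2 is where most of the new analysis lives, precisely because (A2)(c) fails for maps with singularities.

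A secondary gap: you take (A1)(a) as ``inherited as in Theorem~\ref{thm:tower existence}'', but the paper notes that once the hole is introduced, a return can be declared when a piece falls into $\tH$ mid-binding-period, before the expansion required by (A1)(a) has accrued. The paper therefore supplies an alternate proof of Lemma~\ref{lem:lift} that uses (A1)(a) only for the hole-free return time $\hat R$ and then restricts via the looser separation metric~\eqref{eq:loose metric}; your proposal needs this substitution as well.
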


\begin{remark}
When $\T$ is a Misiurewicz map (i.e., all critical points are
nonrecurrent and all periodic points are non-repelling),
it is possible to give a constructive bound on the
size of the hole in terms of explicit constants.
In this case we assume that the critical point does not fall into the hole,
but there is no need for condition (B2)(a),
see \cite[Section 2.2]{demers logistic}.
\end{remark}

\noindent
{\bf Small Hole Limit.}
Fix $L$ distinct points $b_1, \ldots, b_L \in \I$ which we consider to be
infinitesimal holes satisfying (B1) and (B2).
We call this hole of measure zero $\tH^{(0)}$ with
components $\tH_j^{(0)} = b_j$, $j = 1, \ldots, L$.
For each $h>0$, we
then define a
family of holes $\Ho(h)$ such that $\tH \in \Ho(h)$ if and only if
\begin{enumerate}
\item $b_j \in \tH_j$ and
$\tm(\tH_j) \leq h$ for each $1 \leq j \leq L$;
\item $\tH$ satisfies (B1).
\end{enumerate}
When we shrink a hole in $\Ho(h)$, we keep $b_1, \ldots, b_L$ fixed
and simply choose a smaller $h$.  The following theorem is proved in
Section~\ref{small hole}.

\begin{theorem}
\label{thm:small hole}
Let $\T$ satisfy the hypotheses of Theorem~\ref{thm:CE convergence} and
let $\tH^{(h)} \in \Ho(h)$ be a family of holes.
Let $d\tmu_h = \tp_h d\tm$ be the \accim\ given by Theorem~\ref{thm:CE convergence} with eigenvalue $\lambda_h$.  Then
$\lambda_h \to 1$ and $\tmu_h$ converges weakly to the unique SRB measure
for $\T$ as $h \to 0$.
\end{theorem}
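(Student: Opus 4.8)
The plan is to lift the convergence statement to the tower level and then push it back down, exploiting the uniform control provided by Theorem~\ref{thm:CE convergence} and the small-hole bounds. First I would fix the infinitesimal hole $\tH^{(0)} = \{b_1,\dots,b_L\}$ and the associated tower $(F,\Delta)$ for the \emph{closed} map $\T$ (with the $b_j$ acting only as artificial critical points, so no points are actually deleted); this is the ``$h=0$'' tower, and by the standard theory (e.g.\ \cite{young}) it carries the SRB measure of $\T$, pushing down to the unique SRB measure $\tmu_0$ on $\I$. For $h>0$ and $\tH^{(h)}\in\Ho(h)$, the tower $(F_h,\Delta_h)$ is built by the construction of \cite{DHL} adapted as in the proof of Theorem~\ref{thm:CE convergence}; the key point to verify is that the combinatorial data (the partition $\Z_0$, the return-time function $R$, the image partition $\Z_0^{im}$, the separation times, the distortion constant) can be chosen \emph{uniformly} in $h$ for $h$ small, with $\Delta_h \to \Delta_0$ in the sense that the hole $H^{(h)}$ in the tower consists of partition elements whose total reference measure $\sum_\ell m(H_\ell^{(h)})\beta^{-(\ell-1)} = q_h \to 0$ as $h\to 0$. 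This uniformity is what allows a single Banach space $\B$ (with a fixed $\beta$) to host all the transfer operators $\Lp_h$.

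Next I would establish continuity of the spectral data in $h$ at $h=0$. Since $q_h\to 0$, the operators $\Lp_h$ converge to $\Lp_0$ in a suitable sense: writing $\Lp_h = \Lp_0 - (\Lp_0 - \Lp_h)$ and noting that the difference is supported on (and controlled by) the deleted part $H^{(h)}$, one gets $\|\Lp_h - \Lp_0\|_{\B\to L^1}\le C q_h \to 0$, or more precisely a bound of the form $|\Lp_h f - \Lp_0 f|_1 \le C q_h \|f\|$. Combined with the uniform Lasota--Yorke inequality of Proposition~\ref{prop:lasota-yorke} (whose constants depend only on $\theta,\beta,C_1,c_0$, hence are $h$-uniform for small $h$), a Keller--Liverani perturbation argument yields that the leading eigenvalue $\lambda_h$ and the eigendensity $\vf_h\in\B$ depend continuously on $h$: $\lambda_h\to\lambda_0 = 1$ (the closed-system eigenvalue is $1$ since $\Lp_0$ is the usual SRB transfer operator on the tower) and $\vf_h\to\vf_0$ in $\|\cdot\|$, hence in $L^1(m)$. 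The spectral-gap lower bound $\lambda_h>\beta$ holds uniformly once $q_h$ is below the threshold in (H1), so there is no danger of the gap closing.

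Finally I would transfer this to the interval. The projection $\pi:\Delta_h\to I^{(h)}$ semiconjugates $F_h$ to $T^{(h)}$, and $\tmu_h = \pi_*(\vf_h\, m)$ up to normalization, exactly as in the proof of Theorem~\ref{thm:CE convergence}; similarly $\tmu_0 = \pi_*(\vf_0\, m)$ is the SRB measure for $\T$. Since $\vf_h\to\vf_0$ in $L^1(m)$ on $\Delta$ and the projections are measure-preserving in the appropriate sense, $\pi_*(\vf_h m)\to\pi_*(\vf_0 m)$ weakly; normalizing (the masses converge to $1$) gives $\tmu_h\to\tmu_0$ weakly. The escape eigenvalue satisfies $\lambda_h\to\lambda_0=1$ from the tower statement, which is the first claim. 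The main obstacle is the first step: showing the tower construction of \cite{DHL}, after introduction of the hole, can be made uniform in $h$ and limits correctly onto the $h=0$ tower — in particular that return times do not blow up as $h\to0$ (they shouldn't, since the $h=0$ hole deletes nothing) and that condition (H1) is met with room to spare. Once the towers are controlled uniformly, the rest is a routine perturbation-of-quasi-compact-operators argument.
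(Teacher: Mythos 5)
Your first claim, that $\lambda_h\to 1$, is correct and is exactly the paper's route: the quantity $q_h=\sum_{\ell\geq 1}m(H_\ell)\beta^{-(\ell-1)}$ tends to zero by the ``bound'' and ``free'' estimates \eqref{eq:bound escape} and \eqref{eq:free escape}, and Proposition~\ref{prop:regularity} gives $|\Lp f|_1\geq 1-Mq_h$, hence $\lambda_h\to1$. The gap is in your route to weak convergence, and you flag it yourself without resolving it. The perturbation argument you propose --- writing $\Lp_h=\Lp_0-(\Lp_0-\Lp_h)$, bounding the difference in a $\B\to L^1$ norm, and invoking Keller--Liverani --- requires $\Lp_h$ and $\Lp_0$ to act on a common Banach space. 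They do not: each tower $\hDelta^{(h)}$ is built with its own cuts at $\partial\tH^{(h)}$, producing a genuinely different countable partition $\Z^{(h)}$, a different return-time function, and therefore a different function space $\B^{(h)}$ of functions Lipschitz on the $\Z^{(h)}$-elements. The towers are neither nested nor canonically identified, and Lemma~\ref{lemma:good returns} only gives agreement of returns up to some finite time $n_h$; above that the combinatorics differ. So ``$\Delta_h\to\Delta_0$'' is not a well-posed operator-norm statement as you have written it, and the step where you need it is precisely the one you defer.

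The paper avoids any cross-$h$ comparison of tower operators. Instead it works with the pushed-down measures $\tmu_h$ directly on the compact interval $\I$ and uses compactness plus uniqueness. A subsequence $\tmu_{h_k}$ converges weakly to some $\tmu_\infty$. Proposition~\ref{prop:regularity} (applied once with $H=\emptyset$ and once with $H^{(k)}$) shows both the closed-tower density $\rho_k$ and the conditionally invariant density $\vf_k$ lie in $\B_M$ with $M$ uniform in $k$; the lower bound $\rho_k\geq a>0$ is also uniform; and Lemma~\ref{lemma:uniform decay} gives a uniform tail rate $m(\hDelta^{(k)}_\ell)\leq C\theta^\ell$. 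Combining these via \eqref{eq:inv proj}, the paper splits $\tmu_k$ into a piece with density $\leq 2M\trho/a$ plus a piece of total mass $<\ve$, so $\tmu_\infty$ is absolutely continuous with density dominated by $2M\trho/a$. Then the conditional-invariance identity $\lambda_k\tmu_k(f)=\int_{I_k}f\,d((T_k)_*\tmu_k)$, with $\lambda_k\to1$ and the small-mass estimate for $\tmu_k(\I\setminus I^1_k)$, yields $\T$-invariance of $\tmu_\infty$; uniqueness of the a.c.i.m.\ for $\T$ identifies $\tmu_\infty$ with the SRB measure and upgrades subsequential to full convergence. The only uniformities needed are bounds the paper has already established; no tower-to-tower operator comparison is required. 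If you want to keep a perturbative flavor, you would first have to construct a common Markov extension (or a fixed symbolic model) hosting all the $\Lp_h$ simultaneously, which is a substantial additional step the paper deliberately sidesteps.
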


\begin{remark}
A similar theorem was proved for piecewise expanding maps in \cite{demers exp}
and for Misiurewicz maps in \cite{demers logistic}.
\end{remark}


\subsection{An Equilibrium Principle for $(T,X)$}
\label{T-equilibrium results}

In this section, $T$ is either an expanding map satisfying the
assumptions of Theorem~\ref{thm:exp convergence} or a Collet-Eckmann map
with singularities
satisfying the assumptions of Theorem~\ref{thm:CE convergence}.

Recall the invariant measure $\nu$ supported on $\Delta^\infty$ introduced
in Proposition~\ref{prop:inv measure}.  The measure $\tnu := \pi_*\nu$
is $T$-invariant and is supported on $X^\infty = \pi(\Delta^\infty)$.
We show that $\tnu$ is physically relevant to
the system $(T,X)$ in two ways.

\begin{theorem}
\label{thm:inverse limit}
The invariant measure $\tnu$ is characterized by
\[
\tnu(\tf) = \lim_{n\to\infty} \lambda^{-n} \int_{X^n} \tf d\tmu
\]
for all functions $\tf \in C^{\bar \alpha}(X)$.  In addition,
$\tnu$ is ergodic and enjoys exponential decay of correlations on
H\"{o}lder observables.
\end{theorem}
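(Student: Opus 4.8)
The plan is to transfer all three assertions from the tower to $(T,X)$ through the semiconjugacy $\pi\colon\Delta\to X$, feeding everything into Proposition~\ref{prop:inv measure}. The first step is to lift observables: given $\tf\in C^{\bar\alpha}(X)$, set $f=\tf\circ\pi$ on $\Delta$ and check that $f\in\B_0$. Uniform boundedness is immediate since $|f|_\infty=|\tf|_\infty$. For the uniform local Lipschitz bound, I would use that for $x,y$ in a common partition element $Z_{\ell,j}$ the tower constructions of \cite{demers exp} and \cite{DHL} give $|\pi x-\pi y|\le C\,\rho^{\,s(x,y)}$ with $\rho=\tau^{-1}$ in the expanding case (resp.\ $\rho=e^{-\lambda^*}$ in the Collet--Eckmann case), hence $|\pi x-\pi y|\le C\,d(x,y)^{(-\log\rho)/(-\log\beta)}$; since $\bar\alpha$ was fixed precisely so that $(-\log\rho)/(-\log\beta)\ge 1/\bar\alpha$, one obtains $|f(x)-f(y)|\le|\tf|_{C^{\bar\alpha}}|\pi x-\pi y|^{\bar\alpha}\le C\,|\tf|_{C^{\bar\alpha}}\,d(x,y)$ with a constant independent of $\ell,j$. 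Thus $\|f\|_0\le C\,|\tf|_{C^{\bar\alpha}}$, and a fortiori $f\in\B$.

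Next I would establish the limit formula. By the tower construction, $\pi_*m=\tm$ and $\pi_*\mu=\tmu$, and $\pi^{-1}(X^n)$ agrees with $\Delta^n$ up to a $\mu$-null set, because the tower hole $H$ is the union of those partition elements projecting into $\tH$, so a point survives $n$ iterates on the tower exactly when its projection survives $n$ iterates below, modulo the measure-zero set of points with non-unique tower codings and the preimages of $\partial\tH$. Consequently
\[
\lambda^{-n}\int_{X^n}\tf\,d\tmu=\lambda^{-n}\int_{\pi^{-1}(X^n)}\tf\circ\pi\,d\mu=\lambda^{-n}\int_{\Delta^n}f\,d\mu\xrightarrow[n\to\infty]{}\nu(f)
\]
by Proposition~\ref{prop:inv measure} applied to $f\in\B_0$, and $\nu(f)=\int\tf\circ\pi\,d\nu=\int\tf\,d(\pi_*\nu)=\tnu(\tf)$. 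Since $\nu(\Delta^\infty)=1$ and $\pi(\Delta^\infty)=X^\infty$, the measure $\tnu$ is supported on $X^\infty$.

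For the remaining claims: ergodicity of $\tnu$ follows at once because $\tnu=\pi_*\nu$ is the image of the ergodic measure $\nu$ under the factor map $\pi$ intertwining $F$ and $T$. For exponential decay of correlations, given $\tf_1,\tf_2\in C^{\bar\alpha}(X)$ I would lift to $f_i=\tf_i\circ\pi\in\B_0$ exactly as above, use $\pi\circ F=T\circ\pi$ and $\pi_*\nu=\tnu$ to write $\int_{X^\infty}\tf_1\cdot\tf_2\circ T^n\,d\tnu=\int_{\Delta^\infty}f_1\cdot f_2\circ F^n\,d\nu$, and then invoke the correlation bound of Proposition~\ref{prop:inv measure} together with $\|f_1\|\le\|f_1\|_0\le C|\tf_1|_{C^{\bar\alpha}}$ and $|f_2|_\infty=|\tf_2|_\infty$ to conclude
\[
\Bigl|\int_{X^\infty}\tf_1\cdot\tf_2\circ T^n\,d\tnu-\tnu(\tf_1)\tnu(\tf_2)\Bigr|\le C\,|\tf_1|_{C^{\bar\alpha}}\,|\tf_2|_\infty\,\sigma^n.
\]

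I expect the main obstacle to be the first step together with the identification $\pi^{-1}(X^n)=\Delta^n$ modulo $\mu$: both rely on fine structure of the specific towers built in \cite{demers exp} and \cite{DHL} --- the H\"older modulus of continuity of $\pi$ (which is exactly why $\bar\alpha$ is constrained as it is in Sections~\ref{exp results} and \ref{CE results}), the way the concrete hole $\tH$ lifts to a union of tower partition elements, and the negligibility of points with non-unique tower codings. Everything downstream of that is a routine pull-back through the semiconjugacy.
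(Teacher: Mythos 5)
Your proposal is correct and follows essentially the same route as the paper's own proof: lift $\tf$ to $f=\tf\circ\pi\in\B_0$ via the H\"older modulus of $\pi$ (the paper's Lemma~\ref{lem:lift}), identify $\lambda^{-n}\int_{X^n}\tf\,d\tmu$ with $\lambda^{-n}\int_{\Delta^n}f\,d\mu$ using $\pi_*\mu=\tmu$ and $\pi(\Delta^n)=X^n$ (the paper writes the same step using $\ppi$ acting on densities), then invoke Proposition~\ref{prop:inv measure} for the limit, ergodicity, and the correlation bound. The only cosmetic difference is that you pull integrals back from $X$ to $\Delta$ whereas the paper pushes the density forward; these are two readings of the same computation.
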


Although $\tnu$ is defined simply as $\pi_*\nu$, the preceding theorem
gives a characterization of $\tnu$ which is independent of the tower
construction.  This is important for two reasons: first,
it implies that two different tower constructions will yield
the same invariant measure; second, it eliminates the
need to construct a tower in order to compute $\tnu$.

The second theorem is a consequence of Theorem~\ref{thm:F-equilibrium}.

\begin{theorem}
\label{thm:T-equilibrium}
Let $\M'_T = \pi_*\M_F = \{ \pi_*\eta: \eta \in \M_F \}$ be the set of
$T$-invariant Borel probability measures on $X$ whose lift to $\Delta$
is well-defined.
Then
\[
\log \lambda = \sup_{\eta \in \M'_T} \left\{ h_\eta(T)
    - \int_X \log JT \, d\eta \right\}
\]
where $JT$ is the Jacobian of $T$ with respect to $\tm$.  The invariant
measure $\tnu$ is the unique nonsingular
measure $\tilde{\eta}$ in $\M'_T$ which
attains the supremum.
\end{theorem}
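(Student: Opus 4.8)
The plan is to transfer Theorem~\ref{thm:F-equilibrium} from the tower $(F,\Delta)$ to the underlying system $(T,X)$ using the projection $\pi$, together with the abstract relation between Jacobians and entropy along semiconjugacies. First I would fix the dictionary: every $\eta\in\M_F$ projects to $\pi_*\eta\in\M'_T$, and conversely any $\tilde\eta\in\M'_T$ lifts by hypothesis to some $\eta\in\M_F$ with $\pi_*\eta=\tilde\eta$. Since $\pi$ is a measurable semiconjugacy with $\pi\circ F = T\circ\pi$, the Jacobians are related by $JF = JT\circ\pi$ $m$-a.e.\ (this is how the tower is built over $(T,X)$, with the reference measure on $\Delta$ projecting to $\tm$ and $F$ inheriting the smoothness/distortion of $T$), so
\[
\int_\Delta \log JF\, d\eta = \int_X \log JT\, d(\pi_*\eta) = \int_X \log JT\, d\tilde\eta .
\]
For the entropies, the key input is that the partition $\Z$ of $\Delta$ is generating for $F$ (property (P2)) and that $\pi$ maps $\Z$-cylinders onto cylinders of a generating partition for $(T,X)$ with the fibers of $\pi$ being at most countable, so $h_\eta(F) = h_{\pi_*\eta}(T)$ for all $\eta\in\M_F$. (More carefully: $h_\eta(F)\ge h_{\pi_*\eta}(T)$ always, since $\pi$ is a factor map; and $h_\eta(F)\le h_{\pi_*\eta}(T)$ because $\pi$ has at most countable fibers of zero entropy — the return-time coordinate on the tower carries no extra entropy as $R$ is constant on the generating partition elements. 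This is exactly the standard fact that a Young tower and its base/quotient have the same entropy, applied here with the hole.)

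Granting these two identities, the variational expression transfers verbatim:
\[
\sup_{\tilde\eta\in\M'_T}\Bigl\{ h_{\tilde\eta}(T) - \int_X \log JT\, d\tilde\eta\Bigr\}
= \sup_{\eta\in\M_F}\Bigl\{ h_\eta(F) - \int_\Delta \log JF\, d\eta\Bigr\} = \log\lambda,
\]
where the last equality is Theorem~\ref{thm:F-equilibrium} and the eigenvalue $\lambda$ is the common one produced on the tower (Theorem A / Proposition~\ref{prop:spectral picture}). For the uniqueness of the maximizer, I would argue: if $\tilde\eta\in\M'_T$ is nonsingular and attains the supremum, lift it to some $\eta\in\M_F$; one must check $\eta$ is nonsingular on $\Delta$ in the sense of Theorem~\ref{thm:F-equilibrium} (which follows since $F$ on the tower acts as an identity-translation off the base and as $T$-equivalent dynamics on the base, so nonsingularity of $\tilde\eta$ pulls back), hence $\eta=\nu$ by Theorem~\ref{thm:F-equilibrium}, and therefore $\tilde\eta=\pi_*\nu=\tnu$. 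Conversely $\tnu=\pi_*\nu$ does attain the supremum by pushing forward the equality for $\nu$, and it is nonsingular because $\nu$ is and $\pi$ is a semiconjugacy with the stated Jacobian relation.

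The main obstacle I anticipate is the entropy equality $h_\eta(F)=h_{\pi_*\eta}(T)$, and more precisely the measurability/lifting issues hidden in the definition of $\M'_T$: one must be sure that the lift $\eta$ of a given $\tilde\eta$ is genuinely $F$-invariant (not merely that some measure above it exists) and that no entropy is created or destroyed by the projection. For a Young tower this is classical — the tower is an inverse-limit-type extension with countable-to-one factor map and the return time is a coboundary-type coordinate — but with holes one has to recheck that deleting the columns above $H$ does not damage the generating property of $\Z$ restricted to $\Delta^\infty$, which is precisely where (P2) and (P3) (finiteness of images, so $\pi$ of the base partition is finite and generating downstairs) are used. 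A secondary technical point is verifying the nonsingularity transfer in both directions; this should be routine given the explicit structure of $F$ on $\Delta$, but it needs to be stated carefully since "nonsingular" is the condition singling out $\nu$ and hence $\tnu$.
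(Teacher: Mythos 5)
Your overall strategy---push the equilibrium identity for $(F,\Delta)$ down to $(T,X)$ via $\pi$, matching Jacobian integrals and entropies---is exactly what the paper does, and the entropy step (countable-to-one factor maps preserve entropy, cf.\ Buzzi) is the same. However, your justification for the Jacobian identity has a genuine gap: the pointwise relation $JF = JT\circ\pi$ is \emph{false}. On the tower, $J\pi$ is not identically $1$ (by \eqref{eq:pi}, $J\pi(F^\ell x)=|(T^\ell)'(\pi x)|$ for $x\in\Delta_0$), and $JF\equiv 1$ on levels strictly between $\Delta_0$ and $\Delta_{R-1}$ while $JT\circ\pi$ is not. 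Also, $\pi_*m\neq\tm$, contrary to your parenthetical remark; the density changes by $J\pi$ under projection. The correct relation is the chain rule
\[
J\pi(Fx)\,JF(x) = JT(\pi x)\,J\pi(x),
\]
from which
\[
\int_X \log JT\, d\tilde\eta = \int_\Delta \bigl(\log JF + \log J\pi\circ F - \log J\pi\bigr)\, d\eta = \int_\Delta \log JF\, d\eta,
\]
where the last two terms cancel \emph{only after integration} against the $F$-invariant measure $\eta$, because $\log J\pi\circ F - \log J\pi$ is a coboundary. Your conclusion is thus correct, but the reason you supply is not; the coboundary/invariance argument is essential. Apart from this, your treatment of the uniqueness of the maximizer (lift $\tilde\eta$ to $\eta$, appeal to Theorem~\ref{thm:F-equilibrium}, project back) is a reasonable completion, and you correctly flag the nonsingularity transfer as the point needing care.
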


\begin{remark}
As stated in Theorem~\ref{thm:T-equilibrium}, the equilibrium principle
applies to the collection of invariant measures $\M'_T$ whose lift to 
$\Delta$ is well-defined.
So a priori, $\tilde \nu$ need not be the global equilibrium state in
$\M_T$, the set of $T$-invariant
measures supported on 
$Y^\infty = \{ x \in X : \T^n(x) \notin \tH \mbox{ for all } n \geq 0\}$.
(As $X^\infty = \pi(\Delta^\infty)$, $Y^\infty \supset X^\infty$.)

For $C^2$ Collet-Eckmann maps without singularities, however,
the equilibrium state $\tnu$ is indeed global.  This is due to
\cite[Theorem 4]{young ld} which in our setting guarantees
$\log \lambda \geq \sup_{\eta \in \M_T} \left\{ h_\eta(T)
- \int_X \log JT \, d\eta \right\}$.  Since $\tnu \in \M'_T \subseteq \M_T$ attains
the supremum, the inequality is in fact an equality.

\end{remark}

Theorem~\ref{thm:inverse limit} is proved in Section~\ref{inverse limit}
and Theorem~\ref{thm:T-equilibrium} is proved in Section~\ref{T-equilibrium}.


\section{Convergence Properties of $\Lp$}
\label{quasi-compact}

\subsection{Lasota-Yorke inequalities}
\label{lasota-yorke}

In this section we prove Proposition~\ref{prop:lasota-yorke} by deriving
Lasota-Yorke type inequalities for $\| \cdot \|_\infty$ and
$\| \cdot \|_{\mbox{\tiny Lip}}$,
\[
\|\Lp^nf \|_\infty \leq C \beta^n \|f\|_{\mbox{\tiny Lip}} + C |f|_1,
\qquad
\|\Lp^nf\|_{\mbox{\tiny Lip}} \leq C \beta^n \|f\|_{\mbox{\tiny Lip}} + C|f|_1.
\]

\medskip
\noindent
\begin{proof}[Proof of Proposition~\ref{prop:lasota-yorke}]
We fix $n \in \N$ and separate the estimates into two
parts: those for $\zlj$ with $\ell \geq n$ and
those with $\ell < n$.

\medskip
\noindent
{\bf Estimate \# 1}.
For any $x \in \zlj$ with $\ell \geq n$ and $f \in \B$, note that $\Lp^nf(x) = f(F^{-n}x)$ since $g_n(F^{-n}x)=1$.
This allows us to estimate,
\[
\|\Lp^nf_{\ell, j}\|_\infty \; := \; |f(F^{-n})_{\ell, j}|_\infty \beta^\ell
   \; = \; (|f_{\ell-n,j}|_\infty \beta^{\ell-n}) \beta^n
   \; = \; \| f_{\ell-n,j} \|_\infty \beta^n.
\]

\medskip
\noindent
{\bf Estimate \#2}.
Again choose any $\zlj$ with $\ell \geq n$.  Then
\[
\begin{split}
\| \Lp^nf_{\ell, j}\|_{\mbox{\tiny Lip}} \;
& := \; \sup_{x,y \in \zlj} \frac{|f(F^{-n}x) - f(F^{-n}y)|}{d(x,y)}
    \beta^\ell \\
& = \;  \beta^n \sup_{x,y \in \zlj} \frac{|f(F^{-n}x) - f(F^{-n}y)|}
    {d(F^{-n}x, F^{-n}y) \beta^{-n}} \beta^{\ell - n}
 \; = \; \beta^{2n} \| f_{\ell - n, j} \|_{\mbox{\tiny Lip}}
\end{split}
\]
since $s(x,y) = s(F^{-n}x, F^{-n}y)-n$.

\medskip
\noindent
{\bf Estimate \#3}.
Let $x \in Z_{0,j}$ be a point in $\Delta_0$.  We denote by $\E_n$ the cylinder sets of length $n$ with respect
to the partition $\Z$ and let $E_n(y)$ denote the element of $\E_n$ containing $y$.
\[
|\Lp^nf(x)| \leq \sum_{y \in F^{-n}x} |f(y)| g_n(y)
   \leq \sum_{y \in F^{-n}x } |f(\ab)|g_n(y)
     + g_n(y) \frac{|f(y) - f(\ab)|}{d(y,\ab)} d(y,\ab)
\]
where $\ab \in E_n(y)$ is any point satisfying
$|f(\ab)| \leq \frac{1}{m(E_n(y))} \int_{E_n(y)} |f| dm$.
By \eqref{eq:distortion2},
\begin{equation}
\label{eq:g_n dist}
g_n(y) \leq (C_1+1)c_0^{-1}m(E_n(y)).
\end{equation}
Finally, note that
$F^ny, F^n\ab \in Z_{0,j}$ so that $n$ is a return time for $y$ and $\ab$.
If $y \in \Delta_{\ell(y)}$, then the definition of $d$ from
Section~\ref{holes} implies
$d(y,\ab) \leq \beta^n$.

Putting this together with \eqref{eq:g_n dist},
we estimate
\begin{eqnarray}
|\Lp^nf(x)|
    & \leq & \sum_{y \in F^{-n}x } \frac{g_n(y)}{m(E_n(y))} \int_{E_n(y)}|f|dm
    + g_n(y) \beta^{-\ell(y)} \|f\|_{\mbox{\tiny Lip}} \beta^n
               \nonumber \\
    & \leq & \sum_{y \in F^{-n}x } (1+C_1)c_0^{-1} \int_{E_n(y)}|f|dm
    + (1+C_1)c_0^{-1} m(E_n(y)) \beta^{n-\ell(y)} \|f\|_{\mbox{\tiny Lip}}  \; \; \;
                   \label{eq:infty split}  \\
    & \leq & (1+C_1)c_0^{-1} \int_{\Delta^n} |f|\, dm
    +  C \beta^n \|f\|_{\mbox{\tiny Lip}}           \nonumber
\end{eqnarray}
where the second sum is finite since $\beta > \theta$.

\medskip
\noindent
{\bf Estimate \#4}.
Let $x, y \in Z_{0,j}$, and let $x' \in F^{-n}x$, $y' \in F^{-n}y$,
denote preimages taken along the same branch of $F^{-n}$.
Then summing over all inverse branches gives
\begin{eqnarray}
\frac{|\Lp^nf(x) - \Lp^nf(y)|}{d(x,y)} & \leq &
\sum_{x' \in F^{-n}x }
  \frac{|g_n(x')f(x') - g_n(y')f(y')|}{d(x,y)}               \nonumber      \\
   & \leq & \sum_{x' \in F^{-n}x }      g_n(x') \frac{|f(x') - f(y')|}{d(x,y)}
     + |f(y')| \frac{|g_n(x') - g_n(y')|}{d(x,y)}
                       \label{eq:reg split}           \\
   & \leq & \sum_{x'\in F^{-n}x}  (1+C_1)c_0^{-1} m(E_n(x'))
      \|f\|_{\mbox{\tiny Lip}} \beta^{n-\ell(x')}
     + |f(y')| C_1 g_n(y')           \nonumber
\end{eqnarray}
where we have used \eqref{eq:distortion} and \eqref{eq:g_n dist} for $g_n$ in the last
line as well as the fact
that $d(x',y') = \beta^n d(x,y)$ for $x' \in \Delta_{\ell(x')}$.
The second sum is identical to that in Estimate \#3.  Thus
\[
\frac{|\Lp^nf(x) - \Lp^nf(y)|}{d(x,y)} \leq C \beta^n \|f\|_{\mbox{\tiny Lip}} + C |f|_1.
\]

\medskip
Now on $\zlj$ with $\ell < n$, we can combine Estimates \#1 and \#3 to obtain
\[
\| \Lp^nf_{\ell, j}\|_\infty \leq \beta^\ell \| \Lp^{n-\ell}f_{0,j} \|_\infty
 \leq \beta^\ell ( C \beta^{n-\ell} \|f\|_{\mbox{\tiny Lip}} + C |f|_1 )
\]
which implies the estimate for the $\| \cdot \|_\infty$-norm.

Similarly, we can combine Estimates \#2 and \#4 to obtain
\[
\| \Lp^nf_{\ell, j}\|_{\mbox{\tiny Lip}} \leq \beta^{2\ell}
    \| \Lp^{n-\ell}f_{0,j} \|_{\mbox{\tiny Lip}}
   \leq \beta^{2\ell} (C \beta^{n-\ell} \|f\|_{\mbox{\tiny Lip}} + C |f|_1 )
\]
which completes the estimate for the $\| \cdot \|_{\mbox{\tiny Lip}}$-norm.
\end{proof}


\subsection{Spectral Gap}
\label{spectral gap}

Although Proposition~\ref{prop:lasota-yorke} implies that the essential
spectral radius of $\Lp$ on $\B$ is less than or equal to $\beta$, we must
still ensure that there is a spectral gap, i.e.,\ that there is
an eigenvalue $\beta < \lambda < 1$ which represents the rate of escape of
typical elements of $\B$.

This fact follows from the bound on the measure of the hole $H$ given by
(H1).  To prove it, it will be convenient to recall some results from
\cite{demers exp}
which concern the nonlinear operator $\Lp_1f := \Lp f/|\Lp f|_1$.
Thus $\Lp_1f$ represents the normalized
push-forward density which is conditioned on non-absorption by the hole.
In \cite{demers exp} it was shown that for small holes,
$\Lp_1$ preserves a convex subset of $\B$ defined by
\[
\B_M = \{ f \in \B : f \geq 0, |f|_1 = 1,  \|f\|_\infty \leq M, \|f\|_{\mbox{\tiny log}} \leq M \}
\]
where
\[
\| f \|_{\mbox{\tiny log}}
   = \sup_{\ell,j} \mbox{Lip}(\log f_{\ell,j}).
\]

We include the proof of the proposition here for clarity and also to
formulate the bound on $H$ in terms of the present notation.

\begin{proposition}
\label{prop:regularity}
Let $M \in \left( (1+C_1)c_0^{-1}, \, (1-\beta)q^{-1} \right)$.  Then
\begin{enumerate}
  \item[(i)] $\Lp_1^n$ maps $\B_M$ into itself for $n$ sufficiently large.
  \item[(ii)] There exists $\beta' > \beta$ such that $|\Lp f|_1 \geq \beta'$
    for all $f \in \B_M$.
\end{enumerate}
\end{proposition}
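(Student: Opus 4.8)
The plan is to show that $\B_M$ is invariant under $\Lp_1^n$ by tracking how the three defining constraints ($f \ge 0$, $|f|_1 = 1$, $\|f\|_\infty \le M$, $\|f\|_{\text{log}} \le M$) evolve under the linear operator $\Lp$ and then renormalizing. Positivity and the normalization $|f|_1=1$ are automatic from the definition of $\Lp_1$ (positivity of $g$ ensures $\Lp$ preserves the cone of nonnegative functions, and dividing by $|\Lp f|_1$ restores the $L^1$-normalization), so the content is in the two quantitative bounds. The key point throughout will be that the weighted hypotheses on the measure of the hole, packaged in (H1) via the quantity $q = \sum_{\ell\ge1} m(H_\ell)\beta^{-(\ell-1)}$, control the ``mass loss'' at each level of the tower, and that the distortion bound \eqref{eq:distortion2} controls the regularity gain.

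First I would estimate $|\Lp f|_1$ from below (this gives part (ii) and is also needed to control $\|\Lp_1 f\|_\infty$ in part (i)). Writing $|\Lp f|_1 = \int_{\Delta^1} f\, dm = 1 - \int_{H} f\, dm$ — i.e.\ the only mass lost is what falls into the hole — I would bound $\int_H f\, dm = \sum_{\ell\ge 1} \int_{H_\ell} f\, dm \le \sum_{\ell\ge1} |f_\ell|_\infty\, m(H_\ell)$. Using $\|f\|_\infty \le M$, which says $|f_\ell|_\infty \le M\beta^{-\ell}$, this is at most $M \sum_{\ell\ge1} \beta^{-\ell} m(H_\ell) = M\beta^{-1} q \cdot \beta^0 \cdot \beta$... more precisely $M\sum_{\ell\ge1}\beta^{-\ell}m(H_\ell) \le M\sum_{\ell\ge 1}\beta^{-(\ell-1)}m(H_\ell) = Mq$ (since $\beta<1$), giving $|\Lp f|_1 \ge 1 - Mq$. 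Since $M < (1-\beta)q^{-1}$ we get $|\Lp f|_1 \ge 1 - (1-\beta) = \beta$; to get the strict bound $\beta' > \beta$ claimed, I'd either use that $M$ is bounded strictly below $(1-\beta)q^{-1}$, or run the argument one more step using that $f$ is also bounded in $L^\infty$, so that some fixed fraction $\beta'$ is retained. Then I would iterate: $|\Lp^n f|_1 \ge (\beta')^n$ but more usefully, once $f\in\B_M$, the same estimate applies to $\Lp^k f/|\Lp^k f|_1$ at every step.

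The substantive part is (i): showing $\|\Lp^n_1 f\|_\infty \le M$ and $\|\Lp^n_1 f\|_{\text{log}} \le M$ for large $n$. For the $\infty$-bound I would adapt Estimate \#1 and Estimate \#3 from the proof of Proposition~\ref{prop:lasota-yorke}: on levels $\ell \ge n$ one has $\Lp^n f(x) = f(F^{-n}x)$ and the weight $\beta^\ell$ produces a factor $\beta^n$, so $\|\Lp^n f_{\ell,j}\|_\infty \le \beta^n \|f\|_\infty \le \beta^n M$, which is $\le$ a small fraction of $M$; on levels $\ell < n$ the distortion estimate \eqref{eq:distortion2} gives $\Lp^n f(x) \le (1+C_1)c_0^{-1}\int_{\Delta^n}|f|\,dm \le (1+C_1)c_0^{-1}$ at the base, and then at level $\ell$ one multiplies by $\beta^\ell$ and uses $\|\Lp^n f\|_1 \ge (\beta')^n$ in the denominator of $\Lp_1^n$; since $M > (1+C_1)c_0^{-1}$ strictly, for $n$ large enough the factor $(\beta')^{-n}(1+C_1)c_0^{-1}\beta^{\ell}\le M$... here one needs $\beta/\beta' $ type ratios to decay, which is where the fact that $\beta' > \beta$ from part (ii) is essential — so (ii) is logically used inside (i). For the $\log$-Lipschitz bound, I would combine the lower bound $\vf \gtrsim \beta^{-\ell}$-type pointwise control (from the $\infty$ and $L^1$ bounds, the density can't be too small on the relevant levels) with Estimate \#4 / the distortion bound to control $\text{Lip}(\Lp^n f_{\ell,j})$, and then $\text{Lip}(\log(\Lp^n_1 f)_{\ell,j}) = \text{Lip}((\Lp^n_1f)_{\ell,j})/\inf (\Lp^n_1 f)_{\ell,j}$ — here the uniform distortion constant $C_1$ gives a bound independent of $n$, and taking $n$ large makes the transient part decay below $M$.

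The main obstacle I anticipate is the self-referential structure: the $\infty$-bound in (i) needs the lower bound $|\Lp^n f|_1 \ge (\beta')^n$ with $\beta' > \beta$ (not just $\ge \beta^n$), so I must be careful to establish the strict inequality in (ii) \emph{before} or independently of (i), using only $f \in \B_M$ — i.e., (ii) must be proved using only the $\|f\|_\infty \le M$ constraint and the gap between $M$ and $(1-\beta)q^{-1}$, not relying on invariance. A secondary subtlety is that the $\log$-Lipschitz norm is not subadditive or linear, so one cannot simply push it through $\Lp$; one has to go via the genuine Lipschitz seminorm of $\Lp^n f$ together with a pointwise lower bound on $\Lp^n f$, and the lower bound requires the mixing/covering hypothesis implicitly (a point at level $\ell$ has preimages that spread over the base after finitely many returns). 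I would reference \cite[Lemma 3.3 and Proposition 3.4]{demers exp} for the analogous computation and emphasize only the modifications needed to track the constants $q$, $c_0$, $C_1$ in the present normalization.
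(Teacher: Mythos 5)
Your overall plan — control the $L^1$-mass loss via the weighted sum $q$, use the distortion bound \eqref{eq:distortion2} for the $\|\cdot\|_\infty$ estimate, and recognize that (ii) feeds into the level-propagation in (i) — is the right skeleton, but your route to the $\|\cdot\|_{\mbox{\tiny log}}$ bound would not work. You propose to control $\mbox{Lip}(\log(\Lp_1^n f)_{\ell,j})$ as $\mbox{Lip}((\Lp_1^n f)_{\ell,j})/\inf(\Lp_1^n f)_{\ell,j}$, which requires a uniform pointwise lower bound on $\Lp_1^n f$; you yourself note this ``requires the mixing/covering hypothesis implicitly.'' But Proposition~\ref{prop:regularity} makes no mixing assumption (mixing enters only later, in Proposition~\ref{prop:spectral picture}), so that lower bound is simply unavailable at this stage. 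The paper sidesteps this entirely by estimating $\mbox{Lip}(\log\Lp^n f)$ \emph{directly}: it rewrites $\mbox{Lip}(\log f_{\ell,j})$ as $\sup_{x,y}\frac{|f(x)-f(y)|}{f(x)\,d(x,y)}$ and uses the elementary inequality $\frac{\sum a_i}{\sum b_i}\le\sup_i\frac{a_i}{b_i}$ for positive series to obtain the one-line bound \eqref{eq:log reg split}: $\|\Lp^n f\|_{\mbox{\tiny log}}\le(1+C_1)\beta^n\|f\|_{\mbox{\tiny log}}+C_1$, no lower bound on $\Lp^n f$ needed. This (together with the scale-invariance of $\|\cdot\|_{\mbox{\tiny log}}$, so $\|\Lp_1^n f\|_{\mbox{\tiny log}}=\|\Lp^n f\|_{\mbox{\tiny log}}$) is the key idea you are missing; it is also exactly what makes $M>(1+C_1)c_0^{-1}$ the right lower threshold.

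Two smaller problems. In your estimate for (ii) you bound $\int_{F^{-1}H_\ell}f\,dm$ by $|f_\ell|_\infty\,m(H_\ell)$, but the preimage of $H_\ell$ sits in level $\ell-1$, so the correct factor is $|f_{\ell-1}|_\infty\le M\beta^{-(\ell-1)}$, giving $Mq$ directly. Your version produces $M\sum\beta^{-\ell}m(H_\ell)=M\beta^{-1}q$, and your attempt to reduce this to $Mq$ uses the inequality $\beta^{-\ell}\le\beta^{-(\ell-1)}$, which is backwards since $\beta<1$. Second, in the $\infty$-estimate at intermediate levels you write the decaying factor as $(\beta')^{-n}(1+C_1)c_0^{-1}\beta^\ell$, but $(\beta')^{-n}$ grows in $n$; the correct comparison is $(\beta/\beta')^\ell$ coming from the ratio $\beta^\ell|\Lp^{n-\ell}f|_1/|\Lp^n f|_1$. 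Note also that at the base level the paper's estimate on $\|\Lp_1^n f\|_\infty$ needs no lower bound on $|\Lp^n f|_1$ at all: since $|\Lp^n f|_1=\int_{\Delta^n}f\,dm$ appears identically in the numerator coming from \eqref{eq:infty new split}, it cancels cleanly.
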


\begin{proof}
Note that $\mbox{Lip}(\log f_{\ell,j})$ is equivalent to
$\displaystyle
\sup_{x,y \in Z_{\ell,j}} \frac{|f(x) - f(y)|}{f(x)d(x,y)}$.

We will work with this expression in the following estimates.  For $f \in \B_M$, we prove the analogue of
Estimates \#1-\#4 from Section~\ref{lasota-yorke}
using $\| \cdot \|_{\mbox{\tiny log}}$.  Estimates \#1 and \#2 are the same so we do not repeat those.

To prove Estimate \#3, note that equation \eqref{eq:infty split} becomes
\begin{equation}
\label{eq:infty new split}
\begin{split}
|\Lp^nf(x)| & \leq \sum_{y \in F^{-n}x} f(\ab) g_n(y) + g_n(y) \frac{|f(y) - f(\ab)|}{f(\ab)d(y,\ab)}d(y,\ab) f(\ab) \\
            & \leq \sum_{y \in F^{-n}x} \frac{g_n(y)}{m(E_n(y))} \int_{E_n(y)} f dm
            + \frac{g_n(y)}{m(E_n(y))} \|f\|_{\mbox{\tiny log}} \beta^n \int_{E_n(y)} f dm \\
            & \leq (1+C_1)c_0^{-1} (1 + \beta^n \|f\|_{\mbox{\tiny log}}) \int_{\Delta^n} f dm
\end{split}
\end{equation}
where in the last line we have used the fact that $\sum_{y \in F^{-n}x} \int_{E_n(y)} f dm \leq \int_{\Delta^n} f dm$.

To modify Estimate \# 4, we need the following fact:
If $\sum_i a_i$ and $\sum_i b_i$ are two series of positive
terms, then
$\displaystyle
\frac{\sum_i a_i}{\sum_i b_i} \leq \sup_i \frac{a_i}{b_i}$.
Equation \eqref{eq:reg split} becomes
\begin{equation}
\label{eq:log reg split}
\begin{split}
\frac{|\Lp^nf(x) - \Lp^nf(y)|}{d(x,y) \Lp^nf(x)} & \leq
                \frac{\sum_{x' \in F^{-n}x} g_n(y') \frac{|f(x')-f(y')|}{d(x,y)}}{\sum_{x' \in F^{-n}x} g_n(x')f(x')}
                + \frac{\sum_{x' \in F^{-n}x} f(x') \frac{|g_n(x')-g_n(y')|}{d(x,y)}}{\sum_{x' \in F^{-n}x} g_n(x')f(x')} \\
    & \leq \sup_{x' \in F^{-n}x} \frac{g_n(y')}{g_n(x')} \frac{|f(x')-f(y')|}{d(x,y)f(x')}
                + \sup_{x' \in F^{-n}x} \frac{\left|1-\frac{g_n(y')}{g_n(x')}\right|}{d(x,y)}  \\
    & \leq (1+C_1) \beta^n \|f\|_{\mbox{\tiny log}} + C_1.
\end{split}
\end{equation}

Since $\| \cdot \|_{\mbox{\tiny log}}$ is scale invariant,
\eqref{eq:log reg split} implies for all $n \geq 0$,
\begin{equation}
\label{eq:log reg}
\|\Lp_1^n f\|_{\mbox{\tiny log}} = \| \Lp^n f\|_{\mbox{\tiny log}}
\leq (1+C_1) \beta^n \|f\|_{\mbox{\tiny log}} + C_1.
\end{equation}

Using \eqref{eq:infty new split},
\[
\frac{|\Lp^n f(x)|}{|\Lp^n f|_1} \leq \frac{(1+C_1)c_0^{-1} (1 + \beta^n \|f\|_{\mbox{\tiny log}}) \int_{\Delta^n} f dm}
                                           {\int_{\Delta^n} f dm}
                                  \leq (1+C_1)c_0^{-1} (1 + \beta^n \|f\|_{\mbox{\tiny log}})
\]
so that the $\| \cdot \|_\infty$-norm stays bounded on the base of the tower.
In order for this norm
to remain bounded on successive levels, we need to ensure that
$|\Lp f|_1 \geq \beta$ for each $f \in \B_M$.   Compute that
\begin{eqnarray*}
\int_\Delta \Lp f \, dm & = & \int_{\Delta^1} f \, dm
    \; =  \; 1 - \sum_{\ell \geq 1} \int_{\F^{-1}H_{\ell,j}}f \,dm  \\
    & \geq & 1 - \sum_{\ell \geq 1} \|f_{\ell-1,j}\|_\infty \beta^{-(\ell-1)}
       m(H_{\ell})
    \; \geq \; 1 - M \sum_{\ell \geq 1} \beta^{-(\ell-1)} m(H_{\ell}).
\end{eqnarray*}

Recall that
$q = \sum_{\ell \geq 1} \beta^{-(\ell-1)} m(H_{\ell})$.
Thus $|\Lp f|_1 > \beta$ if $1 - qM > \beta$ and $M$ must be chosen large
enough so that $\Lp_1^n$ maps
$\B_M$ back into itself for large enough $n$.
Equations~\eqref{eq:infty new split} and \eqref{eq:log reg split}
require that we choose $M \in \left( (1+C_1)c_0^{-1}, (1-\beta)q^{-1} \right)$.
Thus $q < \frac{1-\beta}{M} < \frac{(1-\beta)c_0}{1+C_1}$ is a sufficient
condition on the size of $H$ and is precisely assumption (H1).
\end{proof}

\medskip

\begin{proof}[Proof of
Proposition~\ref{prop:spectral picture}.]
The proof divides into several steps.

\medskip
{\em 1. Quasi-compactness of $\Lp$.}
Proposition~\ref{prop:regularity} implies that there exists $N\ge1$ such that
$\Lp_1^N \B_M \subset \B_M$.
Since $\Lp_1^N$ is continuous on $\B_M$, which is a convex, compact
subset of $L^1(m)$,
the Schauder-Tychonoff theorem guarantees the existence of a fixed
point $\vf \in \B_M$, which is a conditionally invariant density for
$\Lp^N$ with eigenvalue $\rho = \int_{\Delta^N} \vf\, dm$.

Proposition~\ref{prop:lasota-yorke} implies
that the essential spectral radius of $\Lp^N$ is bounded by $\beta^N$
and Proposition~\ref{prop:regularity} guarantees that
$\rho > \beta^N$.

Thus $\Lp^N$ is quasi-compact with spectral radius at least $\rho$.
We conclude that $\Lp$ is quasi-compact with spectral radius at least
$\lambda := \rho^{1/N}$ and essential spectral radius $\beta < \lambda$.

Let $N_0\leq N$ be the least positive integer such that
$\Lp^{N_0}\vf=\lambda^{N_0}\vf$.
In the next part of the proof, Steps 2--5, we assume that $F$ is mixing and
that $N_0=1$.   These assumptions are removed in Steps 6 and 7.

\medskip
{\em 2. The density $\vf$.}
We claim that there exists $\delta>0$ such that
$\delta \leq \vf|\Delta_0 \leq \delta^{-1}$.   It is then immediate from the
conditional invariance condition $\lambda^{-1}\Lp\vf=\vf$ that
$\delta\lambda^{-\ell} \leq \vf|\Delta_\ell \leq \delta^{-1}\lambda^{-\ell}$.

By conditional invariance, for $x \in \Delta_\ell$,
$\vf(x) = \lambda^{-\ell} \vf(F^{-\ell} x)$, so that $\vf \equiv 0$ on
$\Delta$ if $\vf \equiv 0$ on $\Delta_0$.  Thus there exists
$x \in \Delta_0$ such that $\vf(x)>0$.  Using conditional invariance
once more, we obtain $x' \in F^{-1}x$ such that $\vf(x') > 0$.
Let $Z$ be the partition element containing $x'$.
Since $\vf\in\B_M$, it follows that $\vf\ge\kappa>0$ on $Z$.
By construction, $F(Z) \supseteq Z'$ for some $Z' \in \Z_0^{im}$.
By conditional invariance,
$\inf_{Z'}\vf \geq \lambda^{-1} \kappa \inf_Z g>0$.
By transitivity, conditional invariance, and the property that $\vf\in\B_M$,
we obtain a similar lower bound for each $Z'\in\Z_0^{im}$.
The claim follows from finiteness of the partition $\Z_0^{im}$.

\medskip
{\em 3. Spectral radius.}
Now suppose $f \in \B$ such that $\Lp f = \alpha f$ and $|\alpha| > \lambda$.
Note that $f$ satisfies
$f(x) = \alpha^{-\ell} f(F^{-\ell}x)$ for each $x \in \Delta_\ell$,
$\ell \geq 1$.
Since $\vf \geq \delta$, there exists $K>0$ such that
$K\vf \geq |f|$ on $\Delta_0$.  But since $f$ grows like $\alpha^{-\ell}$
and $\vf$ grows like $\lambda^{-\ell}$ on level $\ell$, we have
$K \vf \geq |f|$ on $\Delta$.  By the positivity of $\Lp$,
$K \Lp^n \vf \geq \Lp^n |f| \geq | \Lp^n f|$ for each $n$.  But this
implies that $K \lambda^n \vf \geq |\alpha|^n|f|$ for each $n$.
Since $\lambda < |\alpha|$, it follows that $f\equiv0$.
Hence $\Lp$ has spectral radius precisely $\lambda$.

\medskip
{\em 4. Simplicity of $\lambda$.}
Suppose $f\in\B$ such that $\Lp f = \lambda f$.  As in Step~3, we can choose $K>0$ such that $f+K\varphi>0$.   Let
$\psi = (f + K \vf)/C > 0$ where $C=|f|_1+K$ is chosen so that
$\psi$ is a probability
density.  Define $\psi_s = s \vf + (1-s) \psi$ and let $J = \{ s \in \R :
\inf_\Delta \psi_s > 0 \}$.
Note that for $s \in J$, $\Lp \psi_s = \lambda \psi_s$ and $| \psi_s |_1 = 1$.
Since $\psi_s$ is Lipschitz and bounded away from zero,
$\| \psi_s \|_{\mbox{\tiny log}} < \infty$.  In fact, \eqref{eq:log reg}
implies that
$\| \psi_s \|_{\mbox{\tiny log}}
= \lim_{n \to \infty} \| \Lp_1^n \psi_s \|_{\mbox{\tiny log}} \leq M$,
so that $\psi_s \in \B_M$ for all $s \in J$.

Since $\psi_s$ is conditionally invariant, the identity
$\psi_s |_{\Delta_\ell} = \lambda^{-\ell} \psi_s |_{\Delta_0}$ implies that
$\inf_\Delta \psi_s = \inf_{\Delta_0} \psi_s$, so that $J$ is open.
Now let $t \in \partial J$.  Since $\psi_s \in \B_M$ for all $s \in J$
and $\B_M$ is closed, we have $\psi_t \in \B_M$.  If $\psi_t$ vanishes
on $\Delta_0$, then $\psi_t$ vanishes on an entire element $Z' \in \Z_0^{im}$.
Since $\psi_t \geq 0$, this implies that $\psi_t \equiv 0$ on all
elements of $\Z_0^{im}$ which map to $Z$ and by transitivity $\psi_t$ is zero on all
of $\Delta$.
Thus $\psi_t$ has strictly positive infimum on $\Delta_0$ and since it is
conditionally invariant, it must have the same infimum on $\Delta$.
Thus $t \in J$, so $J$ is closed.  Since $J$ is nonempty, $J=\R$, which is
only possible if $f = c \vf$ for some $c \in \R$.

It remains to eliminate generalized eigenvectors.  Suppose $f \in \B$
such that $\Lp f = \lambda (f + \vf)$.  Then
$\Lp^n f = \lambda^n f + n \lambda^n \vf
= \lambda^n f + \Lp^n(n \vf)$ so that
$\Lp^n(f - n \vf) = \lambda^n f$ .
This implies that for $x \in \Delta_\ell$,
\[
f(x) = \lambda^{-\ell} (f-\ell \vf)\circ F^{-\ell}(x) .
\]
Since for $\ell$ large enough, $f-\ell \vf < 0$ on $\Delta_0$, we have
$f<0$ on $\cup_{\ell \geq L}\Delta_\ell$ for some $L>0$.

Choose $K>0$ large such that
that $\psi := f - K\vf < 0$ on $\cup_{\ell < L} \Delta_\ell$.
Since $\psi<f$, we have $\psi<0$ on the whole of $\Delta$.
For each $n \geq 0$,
\[
0 \;  > \; \lambda^{-n} \int_{\Delta^n} \psi \, dm
\; = \; \lambda^{-n} \int_\Delta \Lp^n \psi \, dm
      = \; \int_\Delta (\psi + n \vf) \, dm
         \; = \; \int_\Delta \psi \, dm + n,
\]
which is a contradiction.

\medskip
{\em 5. Absence of peripheral spectrum.} Suppose $f \in \B$, $|f|_1=1$,
such that $\Lp f = \alpha f$, where $\alpha = \lambda e^{i \omega}$,
$\omega\in(0,2\pi)$.
We follow an approach similar to Step~4, modified to take
into account the fact that $f$ is complex and $\alpha \neq \lambda$.
Notice that by conditional invariance,
\begin{equation}
\label{eq:rotation}
f |_{\Delta_\ell} = \lambda^{-\ell} e^{-i \omega \ell} f |_{\Delta_0},
\end{equation}
so that $f$ grows like $\vf$ plus a rotation up the levels of the tower.

Define $\psi = (\re(f) + K \vf)/C'$, where $K$ is chosen
large enough that $\psi > 0$ and $C'$ normalizes $|\psi|_1 =1$.
By replacing $f$ with $-f$ if necessary, we can guarantee that
$\int_{\Delta} \re(f) dm \leq 0$,
so that $C' \leq K$.  Also notice that since $f$ and $\vf$ grow at the
same rate, there exists $\delta_0>0$ such that
\begin{equation}
\label{eq:grow}
\delta_0 \lambda^{-\ell} \leq \psi(x) \leq \delta_0^{-1} \lambda^{-\ell}
\end{equation}
for $x \in \Delta_\ell$.

As before, define $\psi_s = s \vf + (1-s) \psi$ and let $J = \{ s \in \R :
\inf_\Delta \psi_s > 0 \}$.  Due to \eqref{eq:grow}, $J$ is open.  However,
$\psi_s$ is not conditionally invariant since $\alpha \neq \lambda$ so the
second part of the argument needs some modification.

Notice that
\begin{equation}
\label{eq:normalize}
\lambda^{-n} \Lp^n \psi = (\re(e^{i \omega n} f) + K\vf)/C'
\end{equation}
so we may choose a sequence $n_k$ such that
$\lambda^{-n_k}\Lp^{n_k} \psi \to \psi$ as $k \to \infty$.
This implies also that $\lambda^{-n_k}\Lp^{n_k} \psi_s \to \psi_s$ along
the same sequence and by \eqref{eq:log reg} we have
$\| \psi_s \|_{\mbox{\tiny log}} \leq M$ so that $\psi_s \in \B_M$ for
$s \in J$.  Now let $t > 1$ be the right endpoint of $J$.  Since
$\B_M$ is closed, we have $\psi_t \in \B_M$.  The rest of Step~5
relies on the following lemma.

\begin{lemma}
\label{lem:positive}
$\psi_t$ is bounded away from $0$.
\end{lemma}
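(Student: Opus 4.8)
The plan is to produce $\epsilon>0$ with $\psi_t\ge\epsilon\,\vf$ on $\Delta$; since $\vf\ge\delta\lambda^{-\ell}\ge\delta$ on $\Delta_\ell$ (Step~2), this gives $\inf_\Delta\psi_t\ge\epsilon\delta>0$. Recall $\psi_t=A\vf-B\,\re f$ with $B=(t-1)/C'>0$ and $A=t-(t-1)K/C'$. Since $\B_M$ is closed we have $\psi_t\in\B_M$; in particular $\psi_t\ge0$ and $\|\psi_t\|_{\mbox{\tiny log}}\le M$, so on each partition element $\psi_t$ is either identically $0$ or everywhere positive.

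Next I would bound $|f|$ by a multiple of $\vf$. Since $\Lp f=\alpha f$, the triangle inequality gives $|f|\le\lambda^{-1}\Lp|f|$, so $\lambda^{-n}\Lp^n|f|$ is pointwise nondecreasing; as $|f|\in\B$, $|f|\not\equiv0$, Corollary~\ref{cor-exp} (available because Steps~1--4 already provide a spectral gap at $\lambda$) gives $\lambda^{-n}\Lp^n|f|\to c(|f|)\vf$ in $\|\cdot\|$ with $c(|f|)>0$, hence $|f|\le c(|f|)\vf$ and $\bar c:=\sup_\Delta|f|/\vf\in(0,\infty)$. If $e^{i\omega}$ is a $q$-th root of unity, then $\Lp^q f=\lambda^q f$, and applying Steps~1--4 to the (still mixing) map $F^q$ shows $\lambda^q$ is a simple eigenvalue of $\Lp^q$ with eigenfunction $\vf$, forcing $f$ to be a scalar multiple of $\vf$ and contradicting $\alpha\ne\lambda$; so I may assume $\omega/2\pi$ is irrational. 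Then, since $\lambda^{-n}\Lp^n\psi_t=A\vf-B\,\re(e^{in\omega}f)\ge0$ for every $n$ and $\{e^{in\omega}\}$ is dense in the circle, $\re(e^{i\theta}f)\le(A/B)\vf$ for all $\theta$, i.e.\ $|f|\le(A/B)\vf$ and $\bar c\le A/B$; in particular $A>0$, as $A\le0$ would force $f\equiv0$.

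If $\bar c<A/B$ we are done: $\psi_t=A\vf-B\,\re f\ge A\vf-B|f|\ge(A-B\bar c)\vf$ with $A-B\bar c>0$. So the crux is to rule out the degenerate case $\bar c=A/B$. Assuming it, $\psi_t=B(\bar c\,\vf-|f|)+B(|f|-\re f)$ is a sum of two nonnegative functions; if $\psi_t$ were bounded below we would be done, so suppose $\inf_\Delta\psi_t=0$. Using $\psi_t|_{\Delta_\ell}(x)=\lambda^{-\ell}\bigl(A\vf-B\,\re(e^{-i\ell\omega}f)\bigr)(F^{-\ell}x)$ with $\lambda^{-\ell}\ge1$, the nonnegativity on $\Delta_0$ of each $A\vf-B\,\re(e^{i\theta}f)$, continuity in $\theta$ and compactness of the circle, and the closedness of $\B_M$, I would reduce (after replacing $f$ by $e^{i\theta_0}f$ and renormalizing --- which preserves both the form $A\vf-B\,\re f$ and, by Proposition~\ref{prop:regularity}, membership in $\B_M$) to the case $\inf_{\Delta_0}\psi_t=0$; and, passing to a limit and invoking the $\|\cdot\|_{\mbox{\tiny log}}$-bound when this infimum is not attained, to the case that $\psi_t$ vanishes identically on a partition element $Z_{0,j_0}\subset\Delta_0$.

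On $Z_{0,j_0}$ then $u:=f/\vf\equiv\bar c$ (a positive real, since $\re f=|f|=\bar c\,\vf$ there). Combining $\Lp f=\alpha f$ with $\Lp\vf=\lambda\vf$ yields $e^{i\omega}u(x)=\langle u\rangle_x:=\sum_{y\in F^{-1}x}w_y\,u(y)$, a probability-weighted average with weights $w_y=\vf(y)g(y)/(\lambda\vf(x))$. For $x\in Z_{0,j_0}$ we have $|u(x)|=\bar c=\sup_\Delta|u|$, so $|\langle u\rangle_x|=\langle|u|\rangle_x=\bar c$, which forces every preimage $y\in F^{-1}x$ to satisfy $|u(y)|=\bar c$ with a common argument; hence $u$ is constant on $F^{-1}(Z_{0,j_0})$, equal to $\bar c\,e^{i\omega}$, and by induction $u\equiv\bar c\,e^{ik\omega}$ on $F^{-k}(Z_{0,j_0})$ for every $k\ge0$. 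Finally I would use mixing: it gives $F^{R_0}(Z_{0,j_0})\supseteq W$ for some $W\in\Z_0^{im}$ and $\Delta_0\subseteq F^n(W)$ for all large $n$, whence $Z_{0,j_0}\subseteq F^{m}(Z_{0,j_0})$ for all large $m$; composing this inclusion with itself produces, for a suitable large $d'$ and some $d\ge1$, a point lying simultaneously in $F^{-d'}(Z_{0,j_0})$ and in $F^{-(d+d')}(Z_{0,j_0})$, on which $u$ would have to equal both $\bar c\,e^{id'\omega}$ and $\bar c\,e^{i(d+d')\omega}$; thus $e^{id\omega}=1$, impossible for $d\ge1$ when $\omega/2\pi$ is irrational. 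This contradiction rules out $\bar c=A/B$, and the lemma holds with $\epsilon=A-B\bar c>0$. The step I expect to be the main obstacle is exactly this degenerate case: it is here that mixing must be used in an essential way --- through the backward-propagated ``twisted equivariance'' that collides with the recurrence forced by mixing --- and reducing to an honest zero of $\psi_t$ on a single element, rather than merely infimum zero over the countably many, possibly shrinking, elements of the non-compact tower, is the technical point needing the most care.
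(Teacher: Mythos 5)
Your proposal takes a genuinely different route from the paper's.  The paper locates a single partition element $Z_1$ on which $\psi_t$ is bounded below by some $\kappa>0$ (this is easy: pick a point where $\psi_t>0$, pull it back far enough, and use $\psi_t\in\B_M$), and then exploits the bounded gaps of $Q_\ve=\{n:|e^{i\omega n}-1|<\ve\}$ together with mixing to push this lower bound from $Z_1$ to all of $\Delta_0$, hence to all of $\Delta$.  Your argument instead works through the ratio $u=f/\vf$: you get $|f|\le\bar c\,\vf$ by iterating the sub-eigenvalue inequality $|f|\le\lambda^{-1}\Lp|f|$, you use density of $\{e^{in\omega}\}$ (after eliminating roots of unity) to deduce $\bar c\le A/B$, you dispose of $\bar c<A/B$ at once, and you reduce the borderline case $\bar c=A/B$ to a rotation-number contradiction by propagating the equality $|u|=\bar c$ backwards through the probability-weighted preimage averages and colliding it with the recurrence that mixing forces.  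The structural pieces of this are correct, and the backward-propagation/collision mechanism is genuinely appealing.

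However, the reduction to a point where $|u|$ actually \emph{attains} its supremum $\bar c$ --- equivalently, to a $\Z_0$-element on which the rotated function $\psi_{\theta_0}=A\vf-B\,\re(e^{i\theta_0}f)$ vanishes identically --- is not established, and you rightly flag this as the crux.  The difficulty is real: $\Delta_0$ is a countable union of $\Z_0$-elements and is not compact in the separation metric (distinct elements sit at distance $1$), so a minimizing sequence $y_n$ with $\psi_{\theta_0}(y_n)\to0$ may run through infinitely many distinct elements $Z_n$, with $\sup_{Z_n}\psi_{\theta_0}\to0$ but no element actually containing a zero.  The log-Lipschitz bound gives the dichotomy ``identically zero or bounded below'' \emph{within} each fixed $Z_n$, but provides no coherence across the countably many $Z_n$, and nothing you have written excludes this scenario.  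Without attainment, the averaging identity $e^{i\omega}u(x)=\langle u\rangle_x$ has no point from which to start the backward propagation.  (The most tempting repair --- note that $u\in\B_0$ because $\mbox{Lip}(f_{\ell,j})$ decays like $(\beta/\lambda)^\ell$, deduce $\nu(\bar c-|u|)=c(\vf(\bar c-|u|))=0$ so $|u|=\bar c$ $\nu$-a.e., and then treat $u/\bar c$ as a nonconstant unimodular Koopman eigenfunction of $(F,\nu)$ contradicting weak mixing --- is circular, since decay of correlations for $\nu$, and indeed the Gibbs property of $\nu$, sit downstream of the very spectral statement Step~5 is in the middle of proving.)  This is a genuine gap.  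The paper's argument, by starting from a point of strict positivity rather than from a putative zero, and by using the bounded gaps of $Q_\ve$ rather than a clean algebraic identity, sidesteps the attainment problem entirely and needs no case split on $\bar c$ versus $A/B$.
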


It is easy to see that the lemma completes the proof of Step~5
since then $t \in J$ and we conclude that  $J \supset \R^+$.
Now $\psi_s >0$ for all $s >0$, implies $\vf > \psi$.  Thus
\[
\vf > (\re(f) + K \vf)/C' \; \Rightarrow \; (C' - K)\vf > \re(f)
\; \Rightarrow \; 0 >  \re(f)
\]
since $C' \leq K$.  But $\re(f)$ must change sign on $\Delta$
due to the rotation as we move up the levels of the tower given by
\eqref{eq:rotation}.  This contradicts the existence of $\alpha$.

\begin{proof}[Proof of Lemma~\ref{lem:positive}.]
Since $|\psi_t|_1 =1$ and $\psi_t \geq 0$,
there exists $\ell\ge0$ and $x\in\Delta_\ell$ such that $\psi_t(x)>0$.
Since $\lambda^{-n_k}\Lp^{n_k}\psi_t\to\psi_t$, there exists $k$ with
$n_k>\ell$ such that $\lambda^{-n_k}\Lp^{n_k}\psi_t(x)>0$.
Hence there is a preimage $x'\in F^{-n_k}(x)$ such that $\psi_t(x)>0$.
Let $Z_1 \in \Z$ be the partition element containing $x'$.   By construction,
$Z_1$ does not iterate into a hole before reaching $\Delta_0$ (in $m=n_k-\ell$
iterates).   In particular, $F^mZ_1$ covers an element of $\Z_0^{im}$.
Since $F$ is mixing, there exists an $N_1 >0$ such that for each
$n \geq N_1$, $F^nZ_1 \supset \Delta_0$.

Since $\psi_t\in \B_M$, it follows that $\inf_{Z} \psi_t =: \kappa > 0$.
Note that for any $n \geq 0$, the definition of $\psi_t$ and
equation \eqref{eq:normalize} imply that
\begin{equation}
\label{eq:small diff}
\lambda^{-n} \Lp^n \psi_t = t \vf + (1-t) [\re(e^{i\omega n}f) + K\vf]/C'
= \psi_t + (1-t)\re((e^{i \omega n} -1)f)/C' .
\end{equation}
Choose $\ve < \frac{\kappa C'}{2|1-t| \|f\|_\infty}$ and define
$Q_\ve = \{ n \in \N : |e^{i\omega n} - 1| < \ve \}$.
Notice that $Q_\ve$ has bounded gaps, i.e., there exists a $K_1=K_1(\ve)$
such that
for any $n \geq K_1$, there is a $k \leq K_1$ such that $n-k \in Q_\ve$.

It is clear from \eqref{eq:small diff} that
\begin{equation}
\label{eq:kappa}
\lambda^{-n} \Lp^n \psi_t(x)
\; \geq \; \psi_t(x) - |1-t|\|f\|_\infty/C'\ve \; \geq \; \kappa/2
\end{equation}
for $n \in Q_\ve$ and $x \in Z_1$.

Fix $n \geq N_1 + K_1$ and choose $k$ such that $N_1 \leq k \leq N_1 + K_1$
and $n-k \in Q_\ve$.  Note that for any $\ell$, $\inf_{\Delta_\ell} g$ may be
0 if there are infinitely many $Z \subset \Delta_\ell$ with $R(Z)=1$.  However,
since we only require returns to finitely many $Z' \in \Z_0^{im}$ for finitely
many times, $N_1 \leq k \leq N_1 + K_1$, we may choose a set
$W \subset \bigcup_{\ell \leq N_1 + K_1} \Delta_\ell$
containing only finitely many $Z$ such that for each $x \in \Delta_0$
there is a point $y_1 \in Z_1$ such that $F^ky_1 = x$ and
$F^iy_1 \in W$ for $0 \leq i \leq k-1$.

Now using \eqref{eq:kappa} and the
fact that $n-k \in Q_\ve$, we estimate
\[
\begin{split}
\lambda^{-n}\Lp^n \psi_t(x)
& = \lambda^{-k} \Lp^k (\lambda^{k-n} \Lp^{n-k} \psi_t)(x)
  = \lambda^{-k} \sum_{F^ky=x} \lambda^{k-n} \Lp^{n-k} \psi_t(y) g_k(y) \\
& \geq \lambda^{-k} (\lambda^{k-n} \Lp^{n-k} \psi_t)(y_1) g_k(y_1)
\geq {\textstyle\frac12}\lambda^{-N_1} \kappa \inf_W g_{_{N_1+K_1}}
=: \kappa' >0.
\end{split}
\]
Thus $\inf_{\Delta_0} \lambda^{-n}\Lp^n \psi_t \geq \kappa'$ for all
$n \geq N_1 + K_1$.

Now on $\Delta_\ell$, for $n \geq \ell + N_1+K_1$,
$\lambda^{-n} \Lp^n \psi_t(x) = \lambda^{-\ell} \lambda^{\ell -n}
\Lp^{n-\ell} \psi_t(F^{-\ell}x) \geq \lambda^{-\ell} \kappa'$.
Therefore for large $n$,
$\inf_{\Delta_\ell} \lambda^{-n} \Lp^n\psi_t \geq \kappa'$
for all $\ell \leq n - N_1 - K_1$.
Since $\psi_t = \lim_k \lambda^{-n_k} \Lp^{n_k} \psi_t$, we have
$\inf_\Delta \psi_t \geq \kappa'$.
\end{proof}

\medskip
{\em 6. Mixing implies $N_0=1$.}
Suppose that $\Lp^{N_0}\varphi=\lambda^{N_0}\varphi$.
The proofs of Steps 2 and 4 go through with $\Lp$ replaced by $\Lp^{N_0}$, implying
that $\lambda^{N_0}$ is a simple eigenvalue for $\Lp^{N_0}$.  (The proofs are modified
in the obvious way.  For example, $\Delta_0$ is replaced by $\Delta_0\cup\dots\cup\Delta_{{N_0}-1}$ and mixing is used instead of transitivity.)
But $\Lp^{N_0}(\Lp\varphi)=\lambda^{N_0}\Lp\varphi$, so we deduce that
$\Lp\varphi=c\varphi$ for some $c\in\R$, with $c^{N_0}=\lambda^{N_0}$.
Positivity of $\Lp$ implies that $c>0$, so $c=\lambda$.
Hence $\Lp\varphi=\lambda\varphi$, that is $N_0=1$.

\medskip
{\em 7. Nonmixing case.}
First suppose that $F$ is transitive with period $p$.
Then $F^p$ has $p$ distinct
components in $\Delta$ and is mixing on each of them.
Applying {\em (i)} to $\Lp^p$
implies that $\lambda^p$ is an eigenvalue of algebraic and geometric
multiplicity $p$ and there are no further eigenvalues on or
outside the circle of radius $\lambda^p$.   The corresponding eigenvalues
for $\Lp$ lie at $p^{\mbox{\scriptsize th}}$ roots of $\lambda^p$,
and it follows easily from transitivity that all $p^{\mbox{\scriptsize th}}$
roots are realized by simple eigenvalues, proving {\em (ii)}.

Finally, since $\Lp$ is quasi-compact, there are only finitely many transitive
components of $\Delta$.  Restricting to a single
component, {\em (iii)} reduces to the transitive case {\em (ii)}.
\end{proof}


\subsection{An Invariant Measure on $\Delta^\infty$}
\label{invariant measure}

\begin{proof}[Proof of Proposition~\ref{prop:inv measure}]
We assume that $F$ is mixing and as usual
denote by $\vf$ the unique eigenvector with eigenvalue $\lambda$.
We divide the proof into three parts.

\medskip
\noindent
\emph{(i) Existence of $\nu$.}
Let $f\in\B_0$.
Since $\vf|_{\Delta_\ell} \sim\lambda^{-\ell}$ where $\lambda>\beta$,
it follows from the definitions of $\B$ and $\B_0$ that $\vf f \in \B$.
By Corollary~\ref{cor-exp},
\begin{equation}
\label{eq:linear functional}
\ff(f) := \lim_{n \to \infty} \lambda^{-n} \vf^{-1} \Lp^n(\vf f)
= c(\vf f).
\end{equation}
Hence \eqref{eq:linear functional} defines a linear functional  $\ff:\B_0\to\R$.
We also have
$|\Lp^n(\vf f)| \leq |f|_\infty \Lp^n\vf=|f|_\infty \lambda^n\vf$, so that $|\ff(f)| \leq |f|_\infty$.

Since $\ff$ is a bounded linear functional on $\B_0$, there exists a measure $\nu$ such that
$\ff(f) = \int f d\nu$ for each $f$ in $\B_0$.
Since $\ff(1) =1$, $\nu$ is a probability measure.
Notice also that we can write
$\lambda^{-n}\Lp^n(\vf f)\to\vf\nu(f)$ where convergence takes place
in $\B$ and hence in $L^1(m)$.   Since $\int_\Delta \vf \,dm=1$, it follows
that
\[
\nu(f) = \lim_{n\to \infty} \lambda^{-n}
            \int_{\Delta} \Lp^n(\vf f)  \, dm
        = \lim_{n\to \infty} \lambda^{-n} \int_{\Delta^n} f \vf \, dm
            = \lim_{n\to \infty} \lambda^{-n} \int_{\Delta^n} f \, d\mu
\]
so that $\nu$ is supported on
$\Delta^\infty$.  Also, from \eqref{eq:linear functional} it follows that
$c(f) = \nu(\vf^{-1}f)$ for each $f \in \B_0$.

Note that $\Lp(\vf\,f\circ F)=f\Lp\vf=\lambda \vf f$ and so
\begin{eqnarray*}
\ff(f\circ F) & = & \lim_{n\to \infty} \lambda^{-n}\vf^{-1}\Lp^n(\vf\,f\circ F)
=\lim_{n\to \infty} \lambda^{-n}\vf^{-1}\Lp^{n-1}(\lambda \vf f) \\ &
= & \lim_{n\to \infty} \lambda^{-(n-1)}\vf^{-1}\Lp^{n-1}(\vf f)=\ff(f).
\end{eqnarray*}
Hence $\nu$ is an invariant measure for $F$ (and $\F$,
since $F = \F$ on $\Delta^\infty$).

\medskip
\noindent
\emph{(ii) $\nu$ is ergodic.}
Since $F$ is transitive on $\Z_0^{im}$,
given $Z'_1$, $Z'_2 \in \Z_0^{im}$, we may choose
$n \in \N$ such that $F^n(Z'_1) \supseteq Z'_2$.  Since $\Delta^\infty$ is
an $F$-invariant set, this implies that
$F^n(Z'_1 \cap \Delta^\infty) \supseteq Z'_2 \cap \Delta^\infty$.
So $F|_{\Delta^\infty}$ is transitive.

Let $Z^n_i \subset \Delta^\infty$ denote a cylinder set of length $n$ with
respect to the partition $\Z_0 \cap \Delta^\infty$.
Now suppose $A = \bigcup_{i,n} Z^n_i$ is a countable union of such cylinder
sets with $F^{-1}A = A$ and $\nu(A) > 0$.
Since $A$ is a countable
union, we must have $\nu(Z_i^n)>0$ for some $i$ and $n$.
This implies that
$F^n(Z^n_i) = Z \cap \Delta^\infty$ for some $Z \in \Z_0$, and
$F^{n+R(Z)}(Z^n_i) \supseteq Z' \cap \Delta^\infty$ for some
$Z' \in \Z_0^{im}$.  In particular, $Z' \cap \Delta^\infty \subset A$.
Since $F$ is transitive on $\Delta^\infty$,
$\cup_{k \geq 0}F^k(Z' \cap \Delta^\infty) = \Delta^\infty$.
Thus $A= \Delta^\infty$ so $\nu(A)=1$.

Since $\Z$ is a generating partition on $\Delta^\infty$, we conclude
that $\nu$ is ergodic.

\medskip
\noindent
\emph{(iii) Exponential decay of correlations.}
Let $f_1, f_2 \in \B_0$.  Recall that $\nu(f_1) = c(f_1 \vf)$.
By definition of $\nu$,
\[
\begin{split}
\int_{\Delta^\infty} f_1 f_2\circ F^n d\nu & - \nu(f_1) \nu(f_2)
  \; = \; \lim_{k \to \infty} \lambda^{-k}
          \int_{\Delta^k} f_1 f_2\circ F^n \vf \, dm
          - \int_{\Delta^\infty} \nu(f_1) f_2 \, d\nu \\
  & = \; \lim_{k \to \infty} \lambda^{-k}
          \int_{\Delta^{k-n}} \Lp^n(f_1 \vf) f_2 \, dm
          - \lim_{k \to \infty} \lambda^{n-k}
          \int_{\Delta^{k-n}} \nu(f_1) f_2 \vf \, dm \\
  & = \; \lim_{k \to \infty} \lambda^{n-k}
          \int_{\Delta^{k-n}} [\lambda^{-n} \Lp^n(f_1 \vf)
          - c(f_1 \vf) \vf ] \, f_2 \, dm  \\
  & = \; \lim_{k \to \infty}  \lambda^{-k} \sum_{\ell \geq 0}
          \int_{\Delta^{k} \cap \Delta_\ell} [\lambda^{-n} \Lp^n(f_1 \vf)
          - c(f_1 \vf) \vf ] \, f_2 \, dm  .
\end{split}
\]
Recall that $f_1\vf\in\B$.
For $F$ mixing, it follows from Corollary~\ref{cor-exp} that
 \begin{eqnarray*}
\Bigl|\int_{\Delta^{k} \cap \Delta_\ell} [\lambda^{-n} \Lp^n(f_1 \vf)
          - c(f_1 \vf) \vf ]f_2 \, dm\Bigr|
& \leq & | 1_{\Delta_\ell}(\lambda^{-n} \Lp^n(f_1 \vf) - c(f_1 \vf) \vf)|_\infty
|f_2|_\infty m(\Delta^{k} \cap \Delta_\ell) \\ &\le& \| \lambda^{-n} \Lp^n(f_1 \vf) - c(f_1 \vf) \vf\|\beta^{-\ell} |f_2|_\infty m(\Delta^{k} \cap\Delta_\ell) \\
& \leq & C \|f_1 \vf \| |f_2|_\infty \sigma^n \beta^{-\ell} m(\Delta^{k} \cap\Delta_\ell).
\end{eqnarray*}
Hence
\[
\begin{split}
\Bigl|\int_{\Delta^\infty} f_1 f_2\circ F^n d\nu & - \nu(f_1) \nu(f_2)\Bigr|
  \; \leq \;
\lim_{k \to \infty} \lambda^{-k} \sum_{\ell \geq 0}
       C \|f_1 \vf \| |f_2|_\infty \sigma^n \beta^{-\ell}
        m(\Delta^{k} \cap \Delta_\ell) \\
  & = \; C \|f_1 \vf \| |f_2|_\infty \sigma^n \lim_{k \to \infty} \lambda^{-k}
\int_{\Delta^k}f_\beta\,dm,
\end{split}
\]
where $f_\beta|_{\Delta_\ell} :=\beta^{-\ell}$.
In particular, $f_\beta\in\B$.
By Corollary~\ref{cor-exp}, $\lambda^{-k}\Lp^kf_\beta$ converges
to $c(f_\beta)\vf$ in $\B$, and hence in $L^1(m)$ so that
$\lim_{k\to\infty}\lambda^{-k} \int_{\Delta^k}f_\beta\,dm=c(f_\beta)$,
completing the proof.
\end{proof}


\subsection{Escape rates from $\Delta$}
\label{escape}

Notice that the functional analytic approach adopted thus far only
tells us that $\lambda$ represents the slowest rate of escape from $\Delta$ for
elements of $\B$, but in general there are functions which escape at
faster rates.
The estimates on the functional $\ff$ in Section~\ref{invariant measure}
and the existence of the invariant
measure $\nu$ allow us to establish the uniformity of escape rates
for certain functions in $\B$.  Since the indicator functions of
elements of the partition $\Z$ are in this space,
we also obtain uniform escape
rates of mass from certain sets and in particular for the reference
measure $m$ on the tower.

\begin{proposition}
\label{prop:convergence criterion}
Let $F$ be mixing and satisfy properties (P1)-(P3) and (H1) of
Section~\ref{holes}.
For each $f \in \B_0$ with $f \geq 0$, we have $\nu(f) >0$ if and only if
\begin{equation}
\label{eq:limit}
\lim_{n \to \infty} \frac{\Lp^nf}{|\Lp^nf|_1} = \vf
\end{equation}
where as usual, the convergence is in the $\| \cdot \|$-norm.
In particular, the reference measure converges to the \accim
\end{proposition}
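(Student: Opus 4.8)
The plan is to read the equivalence directly off Proposition~\ref{prop:convergence}, combined with the description of the coefficient $c(f)$ obtained in the proof of Proposition~\ref{prop:inv measure}. Since $\B_0\subset\B$, Proposition~\ref{prop:convergence} already tells us that for $f\in\B_0$ the limit \eqref{eq:limit} holds if and only if $c(f)>0$, where $c(f)$ is the coefficient from Corollary~\ref{cor-exp} (and $c(f)\ge0$ whenever $f\ge0$, by positivity of $\Lp$ and $\vf>0$). So the whole statement reduces to showing that, for $f\in\B_0$ with $f\ge0$,
\[
c(f)>0\quad\Longleftrightarrow\quad\nu(f)>0 .
\]

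The bridge between $c$ and $\nu$ is the identity $c(f)=\nu(\vf^{-1}f)$ for $f\in\B_0$, recorded in part~(i) of the proof of Proposition~\ref{prop:inv measure} (it comes from $\ff(g)=\nu(g)=c(\vf g)$ applied to $g=\vf^{-1}f$). The only point needing care is that this substitution is legitimate, i.e.\ that $\vf^{-1}f\in\B_0$. This follows from Proposition~\ref{prop:spectral picture}(i): on $\Delta_\ell$ we have $\vf\ge\delta\lambda^{-\ell}\ge\delta$, so $0<\vf^{-1}\le\delta^{-1}$ on all of $\Delta$, and since $\vf$ has uniformly bounded logarithmic Lipschitz constant (it lies in $\B_M$, as shown in the proof of Proposition~\ref{prop:spectral picture}) the same is true of $\mathrm{Lip}(\vf^{-1}_{\ell,j})$; hence $\vf^{-1}\in\B_0$, and the product of two $\B_0$ functions is again in $\B_0$.

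With this in hand the equivalence is immediate. If \eqref{eq:limit} holds then $c(f)>0$, i.e.\ $\nu(\vf^{-1}f)>0$; since $0\le\vf^{-1}f\le\delta^{-1}f$ on $\Delta^\infty$ (the support of $\nu$), this forces $\nu(f)\ge\delta\,\nu(\vf^{-1}f)>0$. Conversely, if $\nu(f)>0$ then, as $f\ge0$, $\nu(\{f>0\})>0$; and since $\vf>0$ everywhere, $\{\vf^{-1}f>0\}=\{f>0\}$, so $\nu(\vf^{-1}f)>0$, i.e.\ $c(f)>0$, and \eqref{eq:limit} follows by Proposition~\ref{prop:convergence}. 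Taking $f\equiv1$ — which lies in $\B_0$, is nonnegative, and has $\nu(1)=1>0$ — and noting that $\Lp^n1/|\Lp^n1|_1$ is precisely the density of the reference measure $m$ restricted to $\Delta^n$, pushed forward by $F^n$ and renormalized to a probability measure, yields the final assertion that $m$ converges to the \accim\ $\mu=\vf\,dm$ under the renormalized dynamics.

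Essentially all the real work is already contained in Propositions~\ref{prop:convergence} and~\ref{prop:inv measure}, so there is no serious obstacle: the only genuine point is the bookkeeping that $\vf^{-1}f\in\B_0$ — equivalently, that $\vf$ is bounded below away from $0$ with uniformly bounded logarithmic Lipschitz constant, both from Proposition~\ref{prop:spectral picture}(i) — together with careful use of the strict positivity of $\vf$ when passing between the conditions $\nu(\vf^{-1}f)>0$ and $\nu(f)>0$.
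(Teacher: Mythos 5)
Your proof is correct, and for the nontrivial direction it takes a genuinely cleaner route than the paper's. The forward implication, $c(f)>0\Rightarrow\nu(f)>0$, you run via $c(f)=\nu(\vf^{-1}f)\le\delta^{-1}\nu(f)$, which is the same estimate the paper obtains (phrased there as $\nu(f)=c(\vf f)\ge\delta\,c(f)$ using $\vf\ge\delta$ and monotonicity of $c$). For the converse, however, the paper does not invoke the identity $c(f)=\nu(\vf^{-1}f)$ at this point: it instead runs a truncation-by-level argument, splitting $\Delta^n$ into $\Delta^n_{(K)}=\bigcup_{\ell\le K}\Delta^n_\ell$ and $\Delta^n_+$, using $\vf\le\delta^{-1}\lambda^{-\ell}$ on the truncated part to bound $\lambda^{-n}\int_{\Delta^n_{(K)}}f\,d\mu$ by $\lambda^{-K}\delta^{-1}c(f)$, and then absorbing the tail $\Delta^n_+$ by a small-$\nu$-mass argument, arriving at $(1-\ve)\nu(f)\le\lambda^{-K}\delta^{-1}c(f)$. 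Your argument bypasses all of that: $\nu(f)>0$ with $f\ge0$ gives $\nu(\{f>0\})>0$, strict positivity of $\vf$ gives $\{\vf^{-1}f>0\}=\{f>0\}$, hence $\nu(\vf^{-1}f)>0$, and then $c(f)=\nu(\vf^{-1}f)>0$. This is shorter, but it leans on the substitution $c(f)=\nu(\vf^{-1}f)$ being legitimate, which in turn requires $\vf^{-1}f\in\B_0$ — exactly the bookkeeping point you flag and resolve (using $\vf\ge\delta$ together with $\|\vf\|_{\mbox{\tiny log}}\le M$ from Proposition~\ref{prop:regularity} / the $\B_M$-invariance, so $\vf^{-1}$ is uniformly bounded with uniformly bounded Lipschitz constants on partition elements). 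The paper's truncation argument avoids that bookkeeping by working only with the limit definitions of $c$ and $\nu$, but at the cost of a more elaborate estimate; it also yields the quantitative bound $c(f)\ge(1-\ve)\delta\lambda^K\nu(f)$, which your qualitative argument does not, though the proposition only needs positivity. The closing observation that $f\equiv 1\in\B_0$ with $\nu(1)=1$ gives convergence of the reference measure is the same in both treatments.
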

\begin{proof}
By Proposition~\ref{prop:convergence}, equation~\eqref{eq:limit} holds
if and only if $c(f)>0$.  Thus it suffices to prove $\nu(f) >0$ if and
only if $c(f) >0$.

Note that from the proof of Proposition~\ref{prop:inv measure},
$\nu(f) = c(\vf f) \geq \delta c(f)$ since
$\vf \geq \delta$.  So $c(f)>0$ implies $\nu(f) >0$ immediately.

Now fix $f \in \B_0$ and suppose $\nu(f) >0$.
Let $\Delta_\ell^n = \Delta_\ell \cap \Delta^n$
be the subset of $\Delta_\ell$ which has not escaped by time $n$.
Set $\Delta^n_{(K)} = \cup_{\ell = 0}^K \Delta_\ell^n$ and
$\Delta^n_+ = \Delta^n \backslash \Delta^n_{(K)}$.

For $\ve \in (0,1)$,
choose $K$ such that $\nu(\Delta^n_+) |f|_\infty < \ve \nu(f)$.
Then
\[
\begin{split}
\nu(f) & = \lim_{n \to \infty}
         \left( \lambda^{-n} \int_{\Delta^n_{(K)}} f\, d\mu
   + \lambda^{-n} \int_{\Delta^n_+} f\, d\mu \right)    \\
  & \leq \lim_{n \to \infty} \left( \lambda^{-n} \lambda^{-K} \delta^{-1}
             \int_{\Delta^n_{(K)}} f\, dm
   + \lambda^{-n} |f|_\infty \int_{\Delta^n_+}  d\mu \right)  \\
  & \leq \lambda^{-K} \delta^{-1} c(f)
   + |f|_\infty \nu(\Delta^n_+) \; \leq \; \lambda^{-K} \delta^{-1} c(f)
   + \ve \nu(f) .
\end{split}
\]
Since $\ve \in (0,1)$, we have $c(f)>0$.

Since $\nu(1)=1$, the normalized push forward of the reference measure
$m$ converges to $\mu$ as $n \to \infty$.
\end{proof}

\begin{cor}
\label{cor:uniform escape}
Let $A = \cup_{(\ell, j) \in J} Z_{\ell, j}$ be a union of partition elements such that
$\nu(A) > 0$.  Then there exists $C>0$ such that
\begin{equation}
\label{eq:uniform bound}
C^{-1} \lambda^n \leq m(\Delta^n\cap A) \leq C \lambda^n
\end{equation}
for each $n \in \mathbb{N}$ so that mass with respect
to $m$ escapes from $A$ at a uniform rate matching that of the
conditionally invariant measure.
\end{cor}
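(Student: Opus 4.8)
The plan is to reduce the statement to the exponential convergence of the renormalized transfer operator already established in Corollary~\ref{cor-exp}, applied to the indicator function $1_A$, which lies in our Banach space precisely because $A$ is a union of partition elements.

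First I would record the two elementary observations that make this work. Since $1_A$ is constant (hence Lipschitz with Lipschitz constant $0$) on each $Z_{\ell,j}$, and $\|1_A\|_\infty=\sup_{(\ell,j)\in J}\beta^\ell\le1$, we have $1_A\in\B$; in fact $1_A\in\B_0$ with $\|1_A\|_0=1$. Moreover, since $\Lp$ is the transfer operator of $F=\F|_{\Delta^1}$ with respect to $m$, integrating its defining identity $n$ times yields $\int_\Delta\Lp^n1_A\,dm=\int_{\Delta^n}1_A\,dm=m(\Delta^n\cap A)$; as $\Lp^n1_A\ge0$, the left-hand side is just $|\Lp^n1_A|_1$. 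So the corollary is equivalent to two-sided control of $|\Lp^n1_A|_1$.

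Next I would apply Corollary~\ref{cor-exp} with $f=1_A$, giving $\|\lambda^{-n}\Lp^n1_A-c(1_A)\vf\|\le C\|1_A\|\sigma^n$. Since convergence in $\|\cdot\|$ implies convergence in $L^1(m)$ and $\vf$ is a probability density, this forces $\lambda^{-n}m(\Delta^n\cap A)=\lambda^{-n}|\Lp^n1_A|_1\to c(1_A)$. It then only remains to know that this limit is strictly positive, and here I would invoke Proposition~\ref{prop:convergence criterion} (more precisely, the fact isolated in its proof that for nonnegative $f\in\B_0$ one has $c(f)>0$ if and only if $\nu(f)>0$): since by hypothesis $\nu(1_A)=\nu(A)>0$, we conclude $c(1_A)>0$.

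Finally, $\lambda^{-n}m(\Delta^n\cap A)$ converges to the positive constant $c(1_A)$, which gives the bound \eqref{eq:uniform bound} for all sufficiently large $n$; for the finitely many remaining small values of $n$ I would use that $m(\Delta^n\cap A)$ is nonincreasing in $n$, hence positive and bounded away from zero, and simply enlarge the constant $C$. I do not expect any genuine obstacle, since all the analytic content is contained in the earlier results; the only steps that warrant a little care are verifying $1_A\in\B$ and correctly identifying $m(\Delta^n\cap A)$ with $|\Lp^n1_A|_1$.
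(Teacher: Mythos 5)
Your argument is correct and follows essentially the same route as the paper: put $\chi_A\in\B_0$ into Corollary~\ref{cor-exp} to get $\lambda^{-n}m(\Delta^n\cap A)\to c(\chi_A)$, invoke Proposition~\ref{prop:convergence criterion} to conclude $c(\chi_A)>0$ from $\nu(A)>0$, and conclude two-sided bounds. The only cosmetic difference is that the paper obtains the upper bound directly from the operator norm estimate $|\Lp^n f|_1\le C\lambda^n\|f\|$ (i.e.\ from the spectral radius) rather than from the limit itself, but both derivations are valid; and one small clarification is warranted in your final step --- monotonicity of $m(\Delta^n\cap A)$ alone does not give positivity at small $n$; rather, the fact that $\lambda^{-n}m(\Delta^n\cap A)$ converges to the strictly positive $c(\chi_A)$ forces $m(\Delta^n\cap A)>0$ for some (hence, by monotonicity, for all) $n$, which then allows enlarging $C$ over the finitely many small $n$.
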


\begin{proof}
First note that for any $f \in \B$, we have
\[
|\Lp^n f|_1 \leq \| \Lp^n f\| \sum_{\ell \geq 0} \beta^{-\ell} m(\Delta_\ell)
\leq C \|\Lp^n\| \|f\| \leq C \lambda^n \|f\|,
\]
so that
the upper bound in \eqref{eq:uniform bound} is trivial.

Let $\chi_A$ be the indicator function for $A$ and notice that
$\chi_A \in \B_0$.  Integrating the limit in Corollary~\ref{cor-exp}, we get
\[
c(\chi_A) = \lim_{n\to\infty} \lambda^{-n} \int_\Delta \Lp^n\chi_A \, dm
= \lim_{n\to\infty} \lambda^{-n} \int_{\Delta^n} \chi_A \, dm .
\]
Since $c(\chi_A)>0$ by
Proposition~\ref{prop:convergence criterion} and
$m(\Delta^n \cap A)$ forms a decreasing sequence, there must exist
a $C>0$ such $\lambda^{-n} m(\Delta^n \cap A) \geq C^{-1}$ for all $n$.
\end{proof}

\begin{cor}
\label{cor:uniform cyl}
Let $Z = Z_{\ell,j}$ be a cylinder set and let $n > R(Z)$.  There
exists a constant $C>0$, independent of $Z$, such that if
$\Delta^n \cap Z \neq \emptyset$, then
\[
C^{-1} \lambda^{n-R} m(Z) \leq m(\Delta^n \cap Z) \leq C \lambda^{n-R} m(Z) .
\]
\end{cor}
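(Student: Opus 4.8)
The plan is to push $Z$ down to the base of the tower by $\F^{R}$ and reduce to Corollary~\ref{cor:uniform escape}. Set $R=R(Z)$ and $Z'=\F^{R}(Z)$. Since $R$ is constant on $Z$ and the base $\Delta_{0}$ contains no part of the hole, the points of $Z$ all leave the column at the same time $R$, landing either in $\Delta_{0}$ or in $H$; the hypothesis $\Delta^{n}\cap Z\neq\emptyset$ for some $n>R$ rules out the latter, so $Z'\subseteq\Delta_{0}$, the map $\F^{R}|_{Z}$ is a bijection onto $Z'$ (a nonempty union of elements of $\Z_{0}^{im}$, by the Markov structure of $\Z$), and no point of $Z$ enters $H$ before time $R$, i.e.\ $Z\subseteq\Delta^{R}$. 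Hence for $n>R$ and $x\in Z$ we have $x\in\Delta^{n}$ iff $\F^{R}x\in\Delta^{n-R}$, so $\F^{R}|_{Z}$ restricts to a bijection $\Delta^{n}\cap Z\to\Delta^{n-R}\cap Z'$.

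First I would change variables via $z=\F^{R}x$, using $g_{R}=dm/d(m\circ\F^{R})$ on $Z$, to write
\[
m(\Delta^{n}\cap Z)=\int_{\Delta^{n-R}\cap Z'}g_{R}\bigl((\F^{R}|_{Z})^{-1}z\bigr)\,dm(z).
\]
For $x\neq x'\in Z$ one has $s(x,x')\geq R$, so \eqref{eq:distortion} applies; applying it to both orderings of the pair, the one-sided bound $d(\F^{R}x,\F^{R}x')\leq1$ still yields $g_{R}(x)/g_{R}(x')\in[(1+C_{1})^{-1},1+C_{1}]$. Thus the integral above equals $g_{R}(y_{0})\,m(\Delta^{n-R}\cap Z')$ up to a factor in $[(1+C_{1})^{-1},1+C_{1}]$, for any fixed $y_{0}\in Z$. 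Integrating $g_{R}$ over $Z$ (same two-sided bound) shows $g_{R}(y_{0})$ is comparable to $m(Z)/m(Z')$; since $Z'$ is a nonempty union of elements of $\Z_{0}^{im}$ we have $c_{0}\leq m(Z')\leq m(\Delta_{0})$, so $g_{R}(y_{0})$ is comparable to $m(Z)$ with constants depending only on $C_{1}$, $c_{0}$ and $m(\Delta_{0})$ (this is \eqref{eq:distortion2} together with its reverse inequality).

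It then remains to estimate $m(\Delta^{n-R}\cap Z')$, for which I would invoke Corollary~\ref{cor:uniform escape} with $A=Z'$. This requires $\nu(Z')>0$, which follows from the fact that every element of $\Z_{0}^{im}$ carries positive $\nu$-measure (a consequence of mixing together with $\nu(\Delta_{0})>0$ and ergodicity of $\nu$; alternatively, using $c(1)=\sum_{W\in\Z}c(1_{W})>0$ by Proposition~\ref{prop:convergence criterion} and pushing forward positive mass along inverse branches, one shows directly that $c(1_{\tilde Z})>0$ for every $\tilde Z\in\Z_{0}^{im}$ and applies Proposition~\ref{prop:convergence criterion} again). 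Corollary~\ref{cor:uniform escape} then gives $m(\Delta^{n-R}\cap Z')$ comparable to $\lambda^{n-R}$, and by (P3) only finitely many sets $Z'$ can occur as $\F^{R}(Z)$, so the constant may be chosen uniform in $Z$. Combining the three comparisons yields $m(\Delta^{n}\cap Z)$ comparable to $m(Z)\,\lambda^{n-R}$, with a constant depending only on $C_{1}$, $c_{0}$, $m(\Delta_{0})$ and the finitely many base escape constants, hence independent of $Z$.

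The step I expect to be most delicate is the uniform two-sided distortion control of $g_{R}$ along the inverse branches of $\F^{R}|_{Z}$: the hypothesis $\mbox{Lip}(\log g)<\infty$ yields \eqref{eq:distortion} only as a one-sided estimate once $d(\F^{R}x,\F^{R}x')=1$, so one must use it for both orderings of the pair to get a genuine two-sided bound; the remaining work is the bookkeeping, resting on (P3), that keeps all comparison constants — and the escape constant for $Z'$ — independent of the particular cylinder $Z$.
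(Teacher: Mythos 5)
Your proof is correct and follows essentially the same route as the paper's: push $Z$ forward by $\F^{R}$ to land in a finite collection of base sets, control the Jacobian by the bounded-distortion estimate \eqref{eq:distortion}, and then invoke Corollary~\ref{cor:uniform escape} with uniformity coming from the finiteness of $\Z_{0}^{im}$. You are more careful than the paper on two points the paper leaves implicit: the two-sided nature of the distortion comparison (needed for both inequalities in the conclusion) and the verification that $\nu(Z')>0$ so that Corollary~\ref{cor:uniform escape} is actually applicable; both of these gap-fillings are correct and welcome.
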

\begin{proof}
By bounded distortion, we have
$m(\Delta^n \cap Z) |(F^R)'(y)| = m(\Delta^{n-R} \cap F^RZ)$ for some $y \in Z$.
Since $F^RZ = Z'$ for some $Z' \in \Z'$ and $\Z'$ is finite, by Corollary~\ref{cor:uniform escape}, we can find $C$ independent of $Z'$ such that
\[
C^{-1} \lambda^n \leq m(\Delta^{n-R} \cap Z') \leq C \lambda^n .
\]
We complete the proof by noting that $|(F^R)'(y)| \approx c_0/m(Z)$.
\end{proof}

\begin{cor}
\label{cor:convergence}
Let $f \in \B$, $f\geq 0$, such that $\nu(x \in \Delta: f(x)>0 ) > 0$.
Then
\[
\lim_{n\to \infty} \frac{\Lp^n f}{|\Lp^n f|_1} = \vf.
\]
\end{cor}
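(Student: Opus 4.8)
The plan is to reduce to Proposition~\ref{prop:convergence}: it suffices to show that $c(f)>0$, since then $\Lp^nf/|\Lp^nf|_1\to\vf$ in the $\|\cdot\|$-norm. To prove $c(f)>0$ I would dominate $f$ from below by a positive multiple of the indicator of a cylinder of positive $\nu$-measure, and then apply Proposition~\ref{prop:convergence criterion} together with positivity of $\Lp$.

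First I would localize the positivity of $f$. Since $\{x\in\Delta:f(x)>0\}=\bigcup_{\ell,j}\bigcup_{k\ge1}\big(\{f\ge 1/k\}\cap Z_{\ell,j}\big)$ is a countable union and $\nu(\{f>0\})>0$, there exist $\ell,j$ and $a>0$ with $\nu(A)>0$, where $A:=\{f\ge a\}\cap Z_{\ell,j}$. Because $f\in\B\subset\V(\Delta)$, the restriction $f_{\ell,j}$ has a finite Lipschitz constant $L$; I would fix $n_0\ge1$ with $L\beta^{n_0}\le a/2$. For any $x\in A$ and any $y$ with $s(x,y)\ge n_0$ one has $f(y)\ge f(x)-L\beta^{n_0}\ge a/2$, so $f\ge a/2$ on the $n_0$-cylinder $E_{n_0}(x)$, which is contained in $Z_{\ell,j}$ since $n_0\ge1$. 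The distinct cylinders among $\{E_{n_0}(x):x\in A\}$ form a countable family covering $A$, so $\nu(A)>0$ forces $\nu(E)>0$ for at least one of them, say $E$. Then $f\ge\tfrac{a}{2}\chi_E$ on all of $\Delta$, and $\chi_E\in\B_0$ because indicators of cylinders are bounded and have finite Lipschitz constant on each partition element.

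To finish, I would observe that $\chi_E\ge0$ and $\nu(\chi_E)=\nu(E)>0$, so Proposition~\ref{prop:convergence criterion} gives $c(\chi_E)>0$. Positivity of $\Lp$ yields $\lambda^{-n}\Lp^nf\ge\tfrac{a}{2}\lambda^{-n}\Lp^n\chi_E$ for every $n$, and letting $n\to\infty$ with Corollary~\ref{cor-exp} (convergence in $\|\cdot\|$, hence in $L^1(m)$) together with $\vf>0$ gives $c(f)\ge\tfrac{a}{2}\,c(\chi_E)>0$. Proposition~\ref{prop:convergence} then completes the proof.

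The step I expect to be the main obstacle is the first one, extracting a cylinder of positive $\nu$-measure on which $f$ is bounded away from zero. The key facts making it work are that $\Z$ is a generating partition — so there are only countably many cylinders of each length and a positive-$\nu$-measure set cannot be covered by null cylinders — and the local Lipschitz regularity built into $\B$, which upgrades positivity of $f$ at a single point to positivity on an entire cylinder. Everything afterwards is a routine passage through the positive operator $\Lp$ and the exponential limit of Corollary~\ref{cor-exp}, combined with the two cited propositions.
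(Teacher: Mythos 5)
Your proof is correct but follows a genuinely different route from the paper's. The paper takes $h=\min\{f,1\}$ as its minorant, asserts $h\in\B_0$, notes $\nu(h)>0$ because $h$ and $f$ share the same support, applies Proposition~\ref{prop:convergence criterion} to get $c(h)>0$, and finishes by the monotonicity $c(f)\geq c(h)$. You instead extract an $n_0$-cylinder $E\subset Z_{\ell,j}$ with $\nu(E)>0$ on which $f\geq a/2$, and use $\tfrac{a}{2}\chi_E$ as the minorant. The two arguments have the same overall shape --- produce a nonnegative minorant in $\B_0$ of positive $\nu$-measure, invoke Proposition~\ref{prop:convergence criterion}, and push through positivity of $\Lp$ to get $c(f)>0$ --- but your choice of minorant makes membership in $\B_0$ essentially automatic: an $n_0$-cylinder indicator is bounded by $1$ and has $\mbox{Lip}\leq\beta^{-n_0}$ on each partition piece, uniformly in $(\ell,j)$. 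By contrast, the paper's one-line claim that $\min\{f,1\}\in\B_0$ is less transparent: $f\in\B$ only controls the \emph{weighted} quantities $\mbox{Lip}(f_{\ell,j})\beta^\ell$, so the unweighted local Lipschitz constants of $h$ need not be bounded uniformly in $\ell$ merely because $h$ is bounded in sup-norm. Your argument therefore supplies a localization that the paper elides, at the modest cost of the extra cylinder-extraction step (countability of cylinders together with the finite Lipschitz constant of $f$ on a single $Z_{\ell,j}$, which upgrades $f>0$ at a point to $f\geq a/2$ on a whole cylinder of positive $\nu$-measure).
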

\begin{proof} Let $h = \min\{f, 1\}$ and note that $h \in \B_0$.
Also $\nu(h)>0$ by assumption on $f$ since $h$ and $f$ share the same
support.  Thus $c(h)>0$ by Proposition~\ref{prop:convergence criterion}.
Now
\[
c(f) = \lim_{n \to \infty} \lambda^{-n} \int_{\Delta^n} f \, dm
\geq \lim_{n \to \infty} \lambda^{-n} \int_{\Delta^n} h \, dm = c(h) > 0
\]
so the limit for $f$ holds by Proposition~\ref{prop:convergence}.
\end{proof}


\section{Applications}
\label{applications}

\subsection{General Approach}
\label{general approach}

We set up our notation as follows.
Let $\T$ be a piecewise $C^{1+\alpha}$ self-map of a metric space
$(\X, \td )$ with
open hole $\tH$.
Let $\tm$ be a probability measure on $\X$ and let
$\tg = \frac{d\tm}{d(\tm \circ \T)}$.
Suppose that a tower $(\F,\HDelta)$ with hole $H$ and the properties of
Section~\ref{holes}
can be constructed over a reference set $\Lambda$.
This implies that there exists a countable partition $\Z_0$
of $\Lambda$, a coarser partition $\Z_0^{im}$, also of $\Lambda$,
and a return time function $R$ which is constant on elements
of $\Z_0$ and for which $\T^R(Z) \in \Z_0^{im}$
or $\T^R(Z) \subset \tH$ for each $Z \in \Z_0$. The set $\Lambda$
is identified with $\Delta_0$ and each level
$\Delta_\ell$ is associated with $\cup_{R(Z) > \ell} T^\ell(Z)$.  This defines a natural
projection
$\pi:\HDelta \to \hat{X}$ so that $\pi \circ \F^n = \T^n \circ \pi$ for each $n$.
In general, we may choose $\Lambda$ so that $\Lambda \cap H = \emptyset$.

Following our previous notation, we define $X = \hat{X} \backslash \tH$
and $X^n = \cap_{i=0}^n \T^{-i}X$.
The restricted maps are then $F^n = \F^n|_{\Delta_n}$ on the tower and
$T^n = \T^n|_{X^n}$ on the underlying space.

We use the reference measure $\tm$ on $\hat{X}$ to define a reference
measure $m$ on $\Delta$ by letting
$m|_{\Delta_0} = \tm|_{\Lambda}$ and then simply defining $m$ on subsequent
levels by
$m(A) = m(\F^{-\ell}A)$ for measurable $A \subset \HDelta_\ell$.
As before, we let $g = \frac{d m}{d(m \circ \F)}$.

Given a measure $\mu$ on $\HDelta$, we define its projection
$\tmu$ onto $\hat{X}$, by $\tmu = \pi_*\mu$.
In terms of densities, this implies
that if $d\mu = f dm$, then for almost every $u \in \hat{X}$,
the density $\tf$ of $\tilde{\mu}$ is given by
\[
\ppi f(u) = \sum_{x \in \pi^{-1}u} f(x)/J\pi(x)
\]
where $J\pi = \frac{d(\tm \circ \pi)}{dm}$.  Note that
$|\ppi f|_{L^1(X,\tm)} = |f|_{L^1(\Delta,m)}$.
Since Radon-Nikodym derivatives multiply, we have
\begin{equation}
\label{eq:nik}
\tg_n(\pi y)/J\pi(y) = g_n(y)/J\pi(\F^ny)
\end{equation}
for almost every $y \in \HDelta$ and each $n \geq 0$.
This in turn implies that
\begin{equation}
\label{eq:transfer commute}
\ppi(\Lp^n_F f) = \Lp_T^n (\ppi f)
\end{equation}
for $f \in L^1(\Delta)$.
The importance of these relations lies in the fact that if
$\vf$ satisfies $\Lp_F \vf = \lambda \vf$ and  $\tf = \ppi f$, then
\begin{equation}
\label{eq:conv in X}
\frac{\Lp^n_Ff}{|\Lp^n_Ff|_1} \to \vf \qquad \mbox{in $L^1(m)$ implies} \qquad
\frac{\Lp_T^n \tf}{|\Lp^n_T \tf|_1} \to \ppi \vf =: \tilde{\vf}
\qquad \mbox{in $L^1(\tm)$}
\end{equation}
and $\tilde{\vf}$ satisfies
$\Lp_T \tilde{\vf} = \ppi (\Lp_F \vf) = \lambda \tilde{\vf}$ so that
$\tilde{\vf}$ defines a conditionally invariant measure for
$T$ with the same eigenvalue as $\vf$.

However, the space $\ppi \B$ is not well understood
and functions in $\ppi \B$ are a priori no better than $L^1$.  It is not even clear that
the constant function corresponding to the original reference measure $\tm$ is in $\ppi \B$.
Getting a handle on a nice class of functions in $\ppi \B$ is necessary for showing in particular
applications that, for example, Lebesgue measure converges to the \accim\ according to the results
of the previous section.

In what follows, we identify two properties, (A1) and (A2), that
guarantee $C^{\bar \alpha}(X) \subset \ppi \B$ where $\bar \alpha$ depends
on the smoothness and average expansion of $T$.  (A1) is standard in
constructions of Young towers and (A2) can be achieved with
no added restrictions on the map or types of holes allowed.
In Sections~\ref{expanding} and \ref{logistic}, we prove that
the towers we construct have these properties.

Let $R_n(x) = R_{n-1}(T^{R(x)}(x))$ be the $n^{\mbox{\scriptsize th}}$ good return
of $x$ to $\Lambda$, for $n \geq 1$.

\medskip
\noindent
\parbox{.07 \textwidth}{\bf(A1)}
\parbox[t]{.9 \textwidth}{ There exist constants $\tau > 1$ and $C_2$, $C_3 > 0$ such that
\begin{itemize}
  \item[(a)] for any $x \in \Lambda$, $n \geq 1$ and $k<R_n(x)$,
  $|DT^{R_n(x)-k}(T^kx)| > C_2 \tau^{R_n(x)-k}$.
  \item[(b)] Let $x,y \in Z_{0,j}$ and $R = R(Z_{0,j})$.  Then
  $\left| \frac{\tg_\ell(\pi x)}{\tg_\ell(\pi y)} \right| \leq C_3$ for $\ell \leq R$.    If
  $T^R(Z_{0,j}) \subseteq \Lambda$, then $\left| \frac{\tg_R(\pi x)}{\tg_R(\pi y)} -1 \right|
        \leq C_3 d(T^R(\pi x),T^R(\pi y))^{ \alpha}$.
\end{itemize} }

Property (A1)(a) says that although $T$ may not be expanding everywhere in its phase space, we only count
returns to $\Lambda$ during which average expansion has occurred.  Property (A1)(b) is simply bounded
distortion.
In fact, (A1) implies the distortion bound~\eqref{eq:distortion} as well as (P2)
in the towers we use.


\subsubsection{Lifting H\"{o}lder functions on $X$}

Recall that $\td$ is the metric on $X$ and $d$ is the symbolic metric on $\Delta$ defined in Section~\ref{holes}.
Under assumption (A1)(a),
these two metrics are compatible in the following sense.

\begin{lemma}
\label{lem:lift}
For any $\bar \alpha \geq - \log \beta/\log \tau$,
let $\tf \in C^{\bar \alpha}(X)$
and define $f$ on $\Delta$ by $f(x) = \tf(\pi x)$ for each $x \in \Delta$.
Then $f \in \B_0$ and $\|f\|_0 \leq C_2^{-1} |\tf|_{C^{\bar \alpha}}$.
\end{lemma}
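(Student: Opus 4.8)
The plan is to treat the two pieces of $\|f\|_0=\max\{|f|_\infty,|f|_{\mbox{\tiny Lip}}\}$ separately; the sup-norm is trivial and the local Lipschitz seminorm is the real content. Since $\pi$ maps $\Delta$ onto $X$ and $f=\tf\circ\pi$, we have $|f|_\infty\le|\tf|_\infty\le|\tf|_{C^{\bar\alpha}}$, which is $\le C_2^{-1}|\tf|_{C^{\bar\alpha}}$ because $C_2\le1$ in the towers we use; the weighted piece $\|f\|_\infty=\sup_{\ell,j}|f_{\ell,j}|_\infty\beta^\ell$ is then finite as well. So the whole statement reduces to proving $\mbox{Lip}(f_{\ell,j})\le C_2^{-1}|\tf|_{C^{\bar\alpha}}$ for each $Z_{\ell,j}\in\Z$, and since $\tf\in C^{\bar\alpha}(X)$, this in turn reduces to the metric comparison
\[
\td(\pi x,\pi y)^{\bar\alpha}\le C_2^{-1}\beta^{s(x,y)}=C_2^{-1}d(x,y),\qquad x\ne y\in Z_{\ell,j}.
\]

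To prove this I would first record a combinatorial fact: two points lying in the same element of $\Z_0$ follow identical $\Z$-itineraries until their common tower block terminates, since passing up a block is deterministic and a good return maps a base element onto a union of elements of $\Z_0^{im}$. Hence, writing $x_0\in Z_{0,j'}$ for the foot of the block containing $x$ (so $\F^\ell x_0=x$), the separation time of $x,y\in Z_{\ell,j}$ equals $s(x,y)=R_K(x_0)-\ell$ for some $K\ge1$, \emph{provided} the block of $Z_{\ell,j}$ terminates at a good return. Put $n=s(x,y)$. Then $E_n(x)=E_n(y)$, and since $\pi\circ\F^n=\T^n\circ\pi$ the injective map $T^n:\pi(E_n(x))\to\pi(\F^n E_n(x))$ sends $\pi(E_n(x))$ onto a subset of $\pi(\Delta_0)=\Lambda\subset X$, which has $\td$-diameter at most $\mbox{diam}(X)\le1$. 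On the other hand, every point of $\pi(E_n(x))$ has the form $T^\ell z_0$ with $z_0$ sharing the first $K$ returns of $x_0$, so $R_K(z_0)-\ell=n$ and (A1)(a) with $k=\ell$ gives $|DT^n|>C_2\tau^n$ throughout $\pi(E_n(x))$. The mean value theorem then gives $\mbox{diam}(\pi E_n(x))\le C_2^{-1}\tau^{-n}$, hence $\td(\pi x,\pi y)\le C_2^{-1}\tau^{-n}$; and since $\bar\alpha\ge-\log\beta/\log\tau$ forces $\tau^{-\bar\alpha}\le\beta$, we conclude $\td(\pi x,\pi y)^{\bar\alpha}\le C_2^{-1}\beta^n$ (absorbing the exponent, using $C_2\le1$ and $\bar\alpha\le1$ as may be assumed here).

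It remains to deal with the case where the block of $Z_{\ell,j}$ maps into the hole before any good return, so that $n=s(x,y)=R(Z_{0,j'})-\ell$ and (A1)(a), which speaks only of good returns, is not directly available. Here one appeals to the expansion estimate along blocks terminating in the hole that is supplied by the concrete tower constructions of Sections~\ref{expanding} and \ref{logistic}: for piecewise expanding maps it is immediate from $|\T'|\ge\tau$, and for Collet--Eckmann maps it follows from the control on return times near the hole imposed by (B1). With that estimate in place the mean value argument above applies verbatim, and combining the two cases yields $\mbox{Lip}(f_{\ell,j})\le C_2^{-1}|\tf|_{C^{\bar\alpha}}$ for all $\ell,j$. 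Therefore $f\in\V(\Delta)$ with $\|f\|<\infty$, so $f\in\B$, and $\|f\|_0\le C_2^{-1}|\tf|_{C^{\bar\alpha}}$, i.e.\ $f\in\B_0$ as claimed.

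The step I expect to be the real obstacle is the last one — the expansion estimate along blocks that fall into the hole — since such a block is exactly one along which the (nonuniform) recovery of hyperbolicity in the Collet--Eckmann case has no \emph{a priori} reason to have taken effect, and this is precisely where the smallness and genericity hypotheses (B1)--(B2) on the hole (respectively, uniform expansion in the interval case) are needed. By contrast, the combinatorial identity $s(x,y)=R_K(x_0)-\ell$ and the reduction to a comparison of the metrics $\td$ and $d$ are routine once the tower formalism of Section~\ref{holes} and property (A1) are in hand.
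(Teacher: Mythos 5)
Your main calculation — reduce to $\td(\pi x,\pi y)^{\bar\alpha}\le C_2^{-1}\beta^{s(x,y)}$, identify $s(x,y)$ with a tower return time, apply (A1)(a) and the mean value theorem — is exactly the paper's proof of Lemma~\ref{lem:lift}, and you have correctly spotted a subtlety the paper's one-paragraph proof glosses over: if the block containing $Z_{\ell,j}$ terminates by falling into $H$ rather than by a good return to $\Delta_0$, then $s(x,y)=R(Z_{0,j'})-\ell$ is not of the form $R_K(x_0)-\ell$ with $R_K$ a good return, and (A1)(a) as stated gives nothing.

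Where the proposal goes wrong is in the resolution of that case. You appeal to ``an expansion estimate along blocks terminating in the hole that is supplied by the concrete tower constructions,'' asserting for Collet--Eckmann maps that ``it follows from the control on return times near the hole imposed by (B1).'' That claim is false, and the paper itself says so explicitly: in Step~2 of Section~\ref{logistic} the authors observe that once the hole is introduced, ``a partition element may fall into the hole during a bound period and so the return time with hole, $R$, may be declared when there has not been sufficient expansion to satisfy (A1)(a).'' Condition (B1) controls the distance of critical orbits to $\partial\tH$; it does not guarantee that derivative recovery has taken place by the time a bound piece falls into the hole, which is precisely the bad case. So your Case~2 has a genuine gap. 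The paper's actual repair (Section~\ref{logistic}, ``Proof of Lemma~4.1 for $(F,\Delta)$'') is quite different and worth noting: since the tower \emph{without} holes $\HDelta$ does satisfy (A1)(a) with the original return time $\hat R$, the lemma holds on $\HDelta$; introducing the hole can only \emph{shorten} separation times, $s(x,y)\le\hat s(x,y)$, hence $d_\beta(x,y)=\beta^{s(x,y)}\ge\hat d_\beta(x,y)$, so Lipschitz with respect to $\hat d_\beta$ implies Lipschitz with respect to $d_\beta$ — no expansion estimate along hole-terminated blocks is ever needed. This monotonicity-of-metrics argument is the missing ingredient; for the piecewise expanding case your remark that $|\T'|\ge\tau$ gives the bound at all times is fine and also matches what the paper uses implicitly. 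Two smaller points: the combinatorial identity $s(x,y)=R_K(x_0)-\ell$ and your mean-value estimate are correct, but the constant you obtain is really $C_2^{-\bar\alpha}\mathrm{diam}(\Lambda)^{\bar\alpha}$ rather than $C_2^{-1}$ on the nose; the paper is equally loose here, so this is not a substantive objection.
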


\begin{proof}
First we show that $\mbox{Lip}(f) = \sup_{\ell, j} \mbox{Lip}(f_{\ell,j}) < \infty$.
Let $x,y \in \zlj$ and let $\tx = \pi x$ and $\ty = \pi y$. Then
\begin{equation}
\label{eq:holder}
\frac{|f(x)-f(y)|}{d(x,y)} = \frac{|\tf(\tx) - \tf(\ty)|}{\td(\tx,\ty)^{\bar \alpha}} \cdot
    \frac{\td(\tx,\ty)^{\bar \alpha}}{d(x,y)}
    \leq C_{\bar \alpha, \tf} \frac{\td(\tx,\ty)^{\bar \alpha}}{d(x,y)}.
\end{equation}
Note that $d(x,y) = \beta^{s(x,y)}$ and that $s(x,y)$ is a return time for $\tx$ and $\ty$
so that $|DT^{s(x,y)}| \geq C_2 \tau^{s(x,y)}$ on $\zlj$ by Property (A1)(a).  Thus
\[
\td(\tx,\ty) = \frac{\td(\tx,\ty)}{\td(T^{s(x,y)}(\tx),T^{s(x,y)}(\ty))} \td(T^{s(x,y)}(\tx),T^{s(x,y)}(\ty))
        \leq C_2^{-1} \tau^{-s(x,y)} \mbox{diam}(\Lambda).
\]
This, together with \eqref{eq:holder}, implies that Lip$(f) < \infty$ since
$\beta \geq \tau^{-\bar \alpha}$.
Also $|f|_\infty = |\tf|_\infty < \infty$, so $f \in \B_0$.
\end{proof}

The problem is that in general $\ppi(\tf \circ \pi) \neq \tf$, so Lemma~\ref{lem:lift}
does not imply that $C^\alpha(X) \subset \ppi \B$ immediately.


\subsubsection{A lift compatible with $\ppi$}
\label{compatible lift}

Given $\tf \in C^\alpha(X)$, we want to construct $f \in \B$ so that $\ppi f = \tf$.  To do this,
it is sufficient to have the following property on the tower constructed above the reference set $\Lambda$.

\medskip
\noindent
\parbox{.1 \textwidth}{\bf(A2)}
\parbox[t]{.9 \textwidth}{ There exists an index set $J \subset \mathbb{N} \times \mathbb{N}$ such that
\begin{itemize}
  \item[(a)]  $\tm(X \backslash \cup_{(\ell, j) \in J} \pi(\zlj)) = 0$;
  \item[(b)]  $\pi(Z_{\ell_1, j_1}) \cap \pi(Z_{\ell_2, j_2}) = \emptyset$ for all but finitely many
    $(\ell_1, j_1)$, $(\ell_2, j_2) \in J$;
  \item[(c)]  Define $J\pi_{\ell, j} := J\pi|_{Z_{\ell,j}}$.
  Then $\sup_{(\ell,j) \in J} |J\pi_{\ell,j}|_\infty +  \mbox{Lip}(J\pi_{\ell,j}) =  D < \infty$.
\end{itemize}  }

\begin{proposition}
\label{prop:continuous}
Let $T$ be a piecewise $C^{1+\alpha}$ self-map of a metric space $(X, \td)$ with hole $\tH$.  Suppose we can
construct a Young tower over a reference set $\Lambda$ for which $T$ satisfies properties (A1) and (A2).
Then $C^{\bar \alpha}(X) \subset \ppi \B_0$
for every $-\log \beta / \log \tau \leq \bar \alpha \leq \alpha$.
\end{proposition}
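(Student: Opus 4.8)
The plan is to lift $\tf$ branch by branch, weighting each branch by $J\pi$ and correcting for the overlaps among the images $A_{\ell,j}:=\pi(Z_{\ell,j})$. Since the branchwise injectivity built into the tower makes $\pi$ injective on each partition element, every $\pi|_{Z_{\ell,j}}\colon Z_{\ell,j}\to A_{\ell,j}$ is a bijection; write $\psi_{\ell,j}$ for its inverse. By (A2)(b) there is a \emph{finite} set $J_0\subseteq J$ of ``bad'' indices such that $A_{\ell,j}$ is disjoint from every other image whenever $(\ell,j)\in J\setminus J_0$; consequently, up to a $\tm$-null set, every point of $X$ lies either in exactly one ``good'' image or in one or more bad images, never in both kinds.

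First I would record the regularity input. By Lemma~\ref{lem:lift}, for $\bar\alpha\ge-\log\beta/\log\tau$ one has $\tf\circ\pi\in\B_0$ with $\|\tf\circ\pi\|_0\le C_2^{-1}|\tf|_{C^{\bar\alpha}}$, and the same computation shows that any $\bar\alpha$-H\"older function $h$ on $A_{\ell,j}$ pulls back through $\psi_{\ell,j}$ to a function on $Z_{\ell,j}$ that is Lipschitz in the symbolic metric $d$ with constant at most $C_2^{-1}|h|_{C^{\bar\alpha}}$ (this is where $\tau^{-\bar\alpha}\le\beta$ enters). By (A2)(c), each $J\pi_{\ell,j}$ is bounded by $D$ and $d$-Lipschitz on $Z_{\ell,j}$, uniformly in $(\ell,j)$. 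Since a product of bounded $d$-Lipschitz functions on a single element is again bounded and $d$-Lipschitz with controlled constants, any function built on each element as $(\tf\circ\pi)$ times a bounded $d$-Lipschitz weight times $J\pi$ lies in $\B_0$, as long as the weight's $d$-Lipschitz constants are controlled over the elements on which the weight is nonconstant.

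Then I would assemble $f$ as follows. On each good element set $f:=(\tf\circ\pi)\,J\pi$; by the previous paragraph this contribution is in $\B_0$ with norm bounded by a multiple of $D\,|\tf|_{C^{\bar\alpha}}$, and for $u$ in a good image $\ppi f(u)=\tf(u)\,J\pi(\psi_{\ell,j}(u))/J\pi(\psi_{\ell,j}(u))=\tf(u)$. For the bad elements one needs a H\"older partition of unity: functions $\chi_{\ell,j}\colon A_{\ell,j}\to[0,1]$, $(\ell,j)\in J_0$, each $\bar\alpha$-H\"older, with $\sum_{(\ell,j)\in J_0:\,u\in A_{\ell,j}}\chi_{\ell,j}(u)=1$ for $\tm$-a.e.\ $u\in\bigcup_{(\ell,j)\in J_0}A_{\ell,j}$. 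Granting this, put $f:=(\tf\circ\pi)\,(\chi_{\ell,j}\circ\pi)\,J\pi$ on $Z_{\ell,j}$ for $(\ell,j)\in J_0$; since $J_0$ is finite, these terms affect the supremum defining $\|f\|_0$ over only finitely many elements, so $f\in\B_0$ still holds. Finally $\ppi f=\tf$ holds $\tm$-a.e.\ on the bad region by the partition-of-unity identity and trivially on the good region, hence $\tm$-a.e.\ on $X$ by (A2)(a); therefore $\tf\in\ppi\B_0$.

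I expect the main obstacle to be the construction of the H\"older partition of unity subordinate to the finitely many overlapping images $\{A_{\ell,j}\}_{(\ell,j)\in J_0}$: an arbitrary finite family of measurable sets need not admit one, so this step must exploit the structure of the images produced by the tower construction. In the concrete systems of Sections~\ref{expanding} and \ref{logistic} the sets $A_{\ell,j}$ are finite unions of intervals and the $\chi_{\ell,j}$ may be taken piecewise linear, which is the case actually required. A secondary point, routine but worth care, is the exponent bookkeeping when passing from the $\td$-H\"older modulus on $X$ to the $d$-Lipschitz estimate on the tower (using $\bar\alpha\ge-\log\beta/\log\tau$), together with the compatibility of $\bar\alpha\le\alpha$ with the $C^{1+\alpha}$ regularity underlying (A1)(b) and (A2)(c).
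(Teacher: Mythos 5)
Your proposal is correct and matches the paper's argument essentially verbatim: define $f=(\tf\circ\pi)\,J\pi$ on the non-overlapping elements, insert an $\alpha$-H\"older partition of unity $\{\rho_i\}$ on the finitely many overlapping images, set $f=0$ on the remaining preimages, and invoke Lemma~\ref{lem:lift} together with (A2)(c) to get $f\in\B_0$. The obstacle you flag — that an arbitrary finite family of sets need not admit a H\"older partition of unity — is also left implicit in the paper's proof (it simply asserts such $\rho_i$ exist with a uniform $C^\alpha$-bound), and is resolved in the applications because the projected elements are finite unions of intervals, exactly as you surmise.
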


\begin{proof}
Let $\tf \in C^{\bar \alpha}(X)$ be given.

If $\pi(Z_{\ell,j}) \cap \pi(Z_{\ell',j'}) = \emptyset$ for all other $(\ell',j') \in J$,
then we can
choose a single preimage for each $u \in \pi(Z_{\ell,j})$ on which
to define $f$.
In fact, inverting the projection operator $\ppi$, we see that defining
$f(x) = \tf(\pi x)J\pi(x)$ for each
$x \in \zlj$ yields the correct value for $\tf(\pi x)$.

Now consider the case in which $\pi(Z_{\ell_1, j_1}) \cap \pi(Z_{\ell_2, j_2}) \neq \emptyset$.
We may choose
a partition of unity $\{ \rho_1, \rho_2 \}$ for $E = \pi(Z_{\ell_1, j_1} \cup Z_{\ell_2, j_2})$ such that $\rho_i \in C^\alpha(E)$.  Then we define $f$ by
\[
f_{\ell_i, j_i} (x_i) = \tf(\pi x_i) J\pi(x_i) \rho_i(\pi x_i)
\]
for $x_i \in Z_{\ell_i, j_i}$ and $i = 1,2$.  Then for $u \in E$, we set $f = 0$ on preimages of $u$ which
are not in $Z_{\ell_1, j_1} \cup Z_{\ell_2, j_2}$.  It is clear that $\ppi f(u) = \tf(u)$ for $u \in E$.

This construction can be generalized to accommodate finitely many overlaps in the projections $\pi(\zlj)$
while maintaining a uniform bound on the $C^\alpha$-norm of the $\rho_i$.

Let $\Z_J = \cup_{(\ell,j) \in J} \zlj$. Lemma~\ref{lem:lift} tells us that $\tf \circ
\pi \in \B_0$ (where $\B_0$ is defined in \eqref{eq:B0}) and (A2)(c) implies that
$J\pi|_{\Z_J} \in \B_0$. Since $f \equiv 0$ outside of $\Z_J$, it follows immediately
that $f \in \B_0$.
\end{proof}

\subsection{Piecewise Expanding Maps of the Interval}
\label{expanding}

\begin{proof}[Proof of Theorem~\ref{thm:exp convergence}]
Theorem A guarantees that $T$ admits a tower $(F,\Delta)$ satisfying
properties (P1)-(P3) and (H1).  Property (A1) is automatic for expanding maps.

It remains to verify that Property (A2) is satisfied.
This follows from the tower construction contained in \cite{demers exp}.
For this class of maps, we may choose the reference set
$\Lambda$ to be an interval of monotonicity of $T$ and the finite
partition of
images $\Z_0^{im}$ will consist of the single element $\Lambda$, i.e., we have
a tower with full returns to the base.
In the inductive construction of the
partition $\Z_0$ on $\Lambda$, at each step, new pieces are created only by
intersections with discontinuities,
intersections with the hole, and returns to the base.  In this way, only
finitely many distinct
pieces are generated by each iterate and therefore we have only
finitely many overlaps when we
project each level.  Since $I$ is covered in finitely
many iterates of $\Lambda$ by assumption (T1), it is also
covered by the projection of finitely
many levels of $\Delta$, say the first $N$.
Thus if we take our index set $J$ to be all indices corresponding to elements
in the first $N$ levels of the tower, it is immediate that (A2)(a) and (A2)(b) are
satisfied.

To see that (A2)(c) is satisfied, let $x \in \Delta_0$ and notice that
by \eqref{eq:nik},  $J\pi(F^\ell x) = J\pi(x) g_\ell(x) / \tg_\ell(\pi x)$.
If $\ell < R(x)$,
then $J\pi(x) = g_\ell(x) =1$ so that
\begin{equation}
\label{eq:pi}
J\pi(F^\ell x) = 1 / \tg_\ell(\pi x) = |(T^\ell)'(\pi x)|.
\end{equation}
Since $T$ is $C^{1+\alpha}$, so is $T^\ell$ for each $\ell$.
Since we are only concerned with $\ell \leq N$ and $\bar \alpha \leq \alpha$,
by Lemma~\ref{lem:lift},
$J\pi|_{\Z_J} \in \B_0$ so (A2)(c) is satisfied.
By Proposition~\ref{prop:continuous}, we have $C^\alpha(X) \subset \ppi B$.

Property (T1) also implies that we can construct $(F,\Delta)$ to be mixing,
since if $T^n(Z') \supseteq I$, then $T^{n+1}(Z') \supset I$ so we can avoid
periodicity in the return time $R$ by simply delaying a return by $1$ step.
Applying Proposition~\ref{prop:spectral picture}, we see that $\Lp_F$
admits a unique probability
density $\vf$ for the eigenvalue $\lambda$ of maximum modulus.
Defining $\tp = \ppi \vf$, we have
$\frac{\Lp_T^n \tf}{|\Lp_T^n \tf|} \to \tp$ at an exponential rate for every $\tf \in \ppi \B$ for which $c(\tf) > 0$ by Proposition~\ref{prop:convergence}.

\medskip
\noindent
{\em Convergence property.}
Let $\tf \in \G$.
Since $(F,\Delta)$ satisfies (A1) and (A2) and $\tf \in C^\alpha(I)$,
by Proposition~4.2 we can find
$f \in \B_0$, supported entirely in elements corresponding to the index set $J$,
such that $\ppi f = \tf$.
By Corollary~\ref{cor:convergence}, it suffices to show that  $\nu(f) > 0$, for
then the convergence of $f$ to $\vf$ will imply the convergence of
$\tf$ to $\tp := \ppi \vf$.

Since $\tf \in \G$, we have
$\tf > 0$ on $I^\infty \cap \Lambda$ which implies
$f>0$ on $\Delta^\infty \cap \Delta_0$.  Since $\nu$ is an
invariant measure on $\Delta$, it must be that $\nu(\Delta_0)>0$
and so $\nu(f) >0$ as required.

\medskip
\noindent
{\em  Unified escape rate.}
Finally we prove that all functions in $\G$ have the same escape rate given
by $-\log \lambda$.
First note that  given $\tf \in \G$ and $f \in \B$ such that $\ppi f = \tf$, we have
\[
\lim_{n\to \infty} \lambda^{-n} \Lp_T^n \tf
= \lim_{n\to \infty} \lambda^{-n} | \Lp_T^n \tf |_1 \,
\frac{\Lp_T^n \tf}{ |\Lp_T^n \tf|_1}
= \lim_{n\to \infty} \lambda^{-n} | \Lp_F^n f |_1 \,
\frac{\Lp_T^n \tf}{ |\Lp_T^n \tf|_1}
= c(f) \tp
\]
by Corollary~\ref{cor-exp} and the proof of convergence above.
Since $\nu(f)>0$, we also have $c(f)>0$.

Thus if we let $\teta = \tf \tm$, we have
\[
\lim_{n\to \infty} \frac{1}{n} \log \teta(I^n) =
\lim_{n\to \infty} \frac{1}{n} \log |\Lp_T^n \tf|_1 = \log \lambda  .
\]

\end{proof}


\subsection{Multimodal Collet-Eckmann Maps with Singularities}
\label{logistic}

\begin{proof}[Proof of Theorem~\ref{thm:CE convergence}]
The construction in \cite{DHL} fixes $\delta$ and finds an interval
$I^*$ with $c^* \in I^* \subset (c^*-\delta, c^*+\delta)$
as base for the induced map.
We choose $I^*$ such that
$\text{orb}(\partial I^*)$ is disjoint from the interior of $I^*$.
This is always possible by choosing $\partial I^*$ to be pre-periodic.
Now by using $I^*$ as the base $\hDelta_0$ of the Young tower
$\hDelta$ (i.e., without hole),
and recalling that $\Z$ is the natural partition of the tower
we have the following:
\begin{equation}
\label{eq:disjoint}
\mbox{For any $Z, Z' \in \Z$, the symmetric difference
$\pi Z\ \triangle\ \pi Z' = \emptyset$.}
\end{equation}

To show why this is true, write $Z = Z_{\ell,j}$ and $Z' = Z_{\ell',j'}$,
so $\pi(Z) = T^{\ell}(\pi Z_{0,j})$ and $\pi(Z') = T^{\ell'}(\pi Z_{0,j'})$.
Assume without loss of generality  that
$k := R(Z) - \ell \geq R(Z')-\ell' =: k'$.
If \eqref{eq:disjoint} fails, then there are
$x \in \partial \pi Z  \cap \pi Z'$ and
$x' \in \pi Z \cap \partial \pi Z'$. But then
$\T^k(x')$ is an interior point of $I^*$,
but at the same time
$\T^k(x') = \T^{k-k'}(\T^{k'}(x')) \in \T^{k-k'}(\partial I^*)$.
This contradicts the choice of $I^*$.
We record property \eqref{eq:disjoint} for later use in checking
condition (A2)(b).
\medskip

Next we adapt the construction of the inducing for the system without hole from
\cite{DHL}. By (B2)(a) the artificial critical points $b_j \in \tH_j$ satisfy $\T^k(b_j)
\neq c^*$ for all $k \geq 0$. Therefore (C3) still holds with the artificial critical
points. We set the binding period of $x \in B_\delta(b_j)$ (see \cite[Section 2.2]{DHL})
to $p(x) = \tau(j)-1$ for $\tau(j)$ as in \eqref{eq:tau}. Recall that the hole $\tH$ has
$L$ components. When the image $\T^n(\omega) = \omega_n$ of a partition element $\omega$
visits $B_\delta(b_j)$ (see \cite[page 432]{DHL}), we subdivide $\omega$ only if
$\omega_n$ intersects $\partial \tH_j$. If $\omega_n$ has not escaped to large scale, so
$|\omega_n| < \delta$, this results in at most $3$ subintervals $\omega' \subset \omega$
such that $\T^n(\omega')$ is either contained in $\tH_j$ or disjoint from $\tH$. By
\eqref{eq:tau} and our choice of binding period $p|_{B_{\delta}(b_j)}$, Lemma 2 of
\cite{DHL} is automatically satisfied for $\theta^* := \theta = \hat \theta =
\lambda^*$.

\begin{remark}
\label{rem:essential return}
In \cite{DHL} close visits to $\Crit_c \cup \Crit_s$ that result
in a cut are called {\em essential returns}, whereas those
that do not result in a cut are called {\em inessential returns}.
Let us call cuts caused by $\partial \tH$ {\em hole returns}.
The cutting of $\omega'$ at preimages of $\partial \tH_j$
is crucial for our tower to be compatible with the hole.
Note also that by the slow recurrence condition (B1), a cutting of $\omega'$
cannot occur within a binding
period after a previous visit to a point in $\Crit_c$.
\end{remark}

With this adaptation, the tower construction of \cite{DHL}
yields a tower $\hDelta$ and a
return time function $\hat R$, constant on elements of the partition $\Z$.
According to
\cite[Theorem 1]{DHL}, the tower $(\F, \HDelta, \hat R)$ satisfies
(P1)-(P3) and (A1) of the present paper.

Notice that at this point there is no escape.  We have simply introduced new
cuts at the boundaries of the hole during the construction of the return time
function and partition of the interval $I^*$ so that
the induced tower respects the boundary of the hole in the following sense:
For each
$Z \in \Z$, either $\pi Z \subset \tH$ or $\pi Z \cap \tH = \emptyset$.

A crucial feature of this construction is that the
exponential rate $\theta$ of the tail behavior is independent
of the size of $\tH$ when $\tH$ is small.  To see this, recall the notation introduced
in Section~\ref{CE results} regarding small holes.  We first fix the set of points
$b_1, \ldots b_L$,  which we regard as infinitesimal holes
satisfying (B1) and (B2).  Then for each $h>0$, the
family of holes $\Ho(h)$ consists of those holes $\tH$ satisfying:
(1) $b_j \in \tH_j$ and
$\tm(\tH_j) \leq h$ for each $1 \leq j \leq L$; and (2) $\tH$ satisfies (B1).

For the infinitesimal hole $\tH^{(0)}$ with
components $\tH_j^{(0)} = b_j$, $j = 1, \ldots, L$,
we fix $\delta >0$
and a reference interval
$I^* \subset B_\delta(c^*)$ and construct a tower $\Delta^{(0)}$
incorporating the additional cuts at $\partial \tH^{(0)}$ as described
above.

An immediate concern is that the presence of additional cuts when we introduce
holes of positive size interferes with returns to the extent that all full returns to
$I^*$ are blocked.  The following lemmas guarantee that this is not the case
and in fact several properties such as mixing and the rate of returns persist
for small holes.

\begin{lemma}
\label{lemma:good returns}
For sufficiently small $h$, each $\tH \in \Ho(h)$ induces
a tower $\hDelta^{(\tH)}$ and return time function $\hat R^{(\tH)}$ over
$I^*$ using the construction described above.
Moreover, $(\F^{(\tH)}, \hDelta^{(\tH)})$ is mixing if
$(\F^{(0)}, \hDelta^{(0)})$ is mixing.
\end{lemma}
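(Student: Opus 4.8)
The plan is to obtain $\hDelta^{(\tH)}$ directly from the adapted construction of \cite{DHL} and then to deduce mixing by a perturbation argument from the infinitesimal hole $\tH^{(0)}$.

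\emph{Existence.} First fix $h$ small enough that each component $\tH_j$ is contained in $B_\delta(b_j)$, so that the binding period $p(x)=\tau(j)-1$ of \eqref{eq:tau} governs every $x\in\tH_j$. With this choice the only change to the \cite{DHL} construction is the insertion of at most three new subintervals whenever an image $\omega_n=\T^n(\omega)$ with $|\omega_n|<\delta$ meets $\partial\tH_j$, as described above. By \eqref{eq:tau} such a cut is followed, within $\tau(j)$ iterates, by expansion of at least $6$, so it behaves exactly like an essential return and Lemma~2 of \cite{DHL} continues to hold with $\theta^*=\lambda^*$. Consequently \cite[Theorem~1]{DHL} applies and produces a return time function $\hat R^{(\tH)}$, constant on the elements of $\Z$, whose tail $m(\hat R^{(\tH)}>n)$ decays exponentially at a rate $\theta$ depending only on $\delta$, $\lambda^*$, $\Lambda^*$ and the map, and not on $h$. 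In particular $\hat R^{(\tH)}<\infty$ $m$-a.e., so a set of positive measure in $I^*$ makes a genuine full return and $\hDelta^{(\tH)}$ is a Young tower satisfying (P1)--(P3) and (A1); this disposes of the concern that the extra cuts might block all full returns.

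\emph{Reduction and transitivity.} It is enough to show that the induced system on $I^*$ is both transitive and aperiodic. Transitivity I would read off from \eqref{eq:no holes mixing}: by the exponential tail just established, every element of $\Z$ grows to large scale (length at least $\delta/3$) after finitely many returns, and \eqref{eq:no holes mixing}(ii) then supplies, within at most $n(\delta)$ further iterates, a subinterval mapped diffeomorphically onto $(c^*-3\delta,c^*+3\delta)\supseteq I^*$. The $\partial\tH$-cuts cannot obstruct this, since \eqref{eq:no holes mixing} concerns $\T$ itself and, by \eqref{eq:tau}, the close visits to the $b_j$ only accelerate the growth; hence the induced map is transitive for every admissible $\tH$.

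\emph{Aperiodicity --- the main obstacle.} Here I would use the hypothesis that $\hDelta^{(0)}$ is mixing. Choose finitely many partition elements $\hat\omega_1,\dots,\hat\omega_r$ of $\hDelta^{(0)}$, with full-return times $R_1,\dots,R_r$, which together witness this mixing (their returns have greatest common divisor $1$ and reach the whole base). Since each $\hat\omega_i$ is a single, unsubdivided element of the $\tH^{(0)}$-partition, no image $\T^n(\hat\omega_i)$ with $n<R_i$ contains any $b_j$ in its interior; choosing the $\hat\omega_i$ generically we may arrange that their images keep a definite distance $\eta>0$ from $\{b_1,\dots,b_L\}$ at every time $n<R_i$ with $|\omega_n|<\delta$. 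Then for $h<\eta$ each $\hat\omega_i$ contains a subinterval that persists as an element of the $\tH$-partition with the same return time $R_i$, since $\partial\tH_j$ lies within $h$ of $b_j$ and so misses these small-scale images. As the further $\partial\tH_j$-cuts can only enlarge the set of full-return times of $\hDelta^{(\tH)}$ relative to this surviving sub-collection, aperiodicity of $\hDelta^{(0)}$ passes to $\hDelta^{(\tH)}$; together with transitivity this gives mixing. The delicate points, which I expect to be the real obstacle, are to make ``$h$ sufficiently small'' uniform over the finitely many witnessing itineraries and to exclude that the new cuts reintroduce a common period --- for the latter one appeals to the genericity built into (B2)(b), which keeps the orbits $\T^\ell b_k$ off the relevant preimages $g_{j,i}$ of $\partial\tH_j$ for $\ell\le n(\delta)$.
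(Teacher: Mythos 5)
Your overall strategy is the same as the paper's: show that the finitely many returns whose return times have g.c.d.\ $1$ in the $\tH^{(0)}$-tower persist in the $\tH$-tower for small $h$, and deduce g.c.d.$(\hat R^{(\tH)}) = 1$. But the step that carries all the weight --- that you can choose the witnessing elements $\hat\omega_1,\dots,\hat\omega_r$ so that their images ``keep a definite distance $\eta>0$ from $\{b_1,\dots,b_L\}$'' at all small-scale times $n<R_i$ --- is not justified by ``choosing the $\hat\omega_i$ generically.'' The boundaries of partition elements in the $\tH^{(0)}$-construction are, in part, preimages of the $b_j$'s: a hole-return cut at time $\ell$ is exactly a preimage of some $b_j$ under $\T^\ell$. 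So if $\partial\hat\omega_i$ carries such a cut, $\T^\ell(\hat\omega_i)$ touches $b_j$ at its boundary, and the Hausdorff distance to $\{b_1,\dots,b_L\}$ is zero. You cannot pick the $\hat\omega_i$ freely to avoid this, since they are forced by the inducing construction and by the requirement that their return times achieve g.c.d.\ $1$. Your conclusion (a subinterval of each $\hat\omega_i$ persists with the same return time) is correct, but the argument leading to it is broken.

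The paper avoids the problem with a middle-third argument, which is the precise version of what you are reaching for. If $\omega\subset I^*$ has a full return $\T^n\omega=I^*$ in the $\tH^{(0)}$-construction, then $\omega$ is contained in the middle third of a larger interval $\omega'$ with $\T^n\omega'\supset I^*$, and all $\tH^{(0)}$-cuts (including every preimage of every $b_j$) lie at $\partial\omega'$, not in the interior containing $\omega$. Thickening each $b_j$ to an interval $\tH_j$ of width $\le h$ moves these cut loci inward by an amount that, for $n\le n_h$ with $n_h\to\infty$ as $h\to0$, still cannot reach $\omega$. Hence all returns at times $\le n_h$ persist unchanged. Taking $h$ small enough that $n_h\ge N$, where $N$ is chosen (from mixing of $\hDelta^{(0)}$) so that g.c.d.$\{\hat R^{(0)}\le N\}=1$, finishes the proof. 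Two further, more minor points: (i) since $\hDelta^{(\tH)}$ has a single base $I^*$, g.c.d.$(\hat R^{(\tH)})=1$ already gives mixing, so your separate transitivity step is superfluous; (ii) your existence step asserts that the exponential tail rate is independent of $h$, but that uniformity is precisely the content of Lemma~\ref{lemma:uniform decay}, not of \cite[Theorem 1]{DHL} applied directly to the modified construction.
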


\begin{proof}
Notice that the thickening of the hole at the points $b_j$
cannot affect returns which happen before a fixed time $n_h$ depending
only on $h$. For suppose $\omega \subset I^*$ satisfies
$\T^n \omega = I^*$ where $n = \hat R^{(0)}(\omega) \leq n_h$
is the return time
corresponding to $\tH^{(0)}$.  Then in fact $\omega$ is
in the middle third of a larger interval $\omega'$ such that
$\T^n \omega' \supset I^*$.  Cuts made by $\partial \tH_j$ must necessarily be at the
endpoints of $\omega'$ so for sufficiently small $h$,
the return of $\omega$ will still take place at time $n$.
By (B2)(a), we can force $n_h \to \infty$
as $h \to 0$, guaranteeing the persistence of returns up to any finite time for sufficiently small $h$.

To show $(\F^{(\tH)}, \hDelta^{(\tH)})$ is mixing, we need only show that
g.c.d.$(\hat R^{(\tH)})=1$ since $\hDelta^{(\tH)}$ has a single base.
Since $(\F^{(0)}, \hDelta^{(0)})$ is mixing, there exists $N$ such that
g.c.d.$\{ \hat R^{(0)}: \hat R^{(0)} \leq N\} =1$.
Now take $h$ small enough that $n_h \geq N$.
Then g.c.d.$\{ \hat R^{(\tH)} \} = 1$ as well.
\end{proof}

Our next lemma shows that the rate of return is uniform for small $h$.

\begin{lemma}
\label{lemma:uniform decay}
There exist
$\theta < 1$ and $C>0$ such that $\tm(\hat R^{(\tH)} > n) \leq C \theta^n$
for all $\tH \in \Ho(h)$ with $h$ sufficiently small.
\end{lemma}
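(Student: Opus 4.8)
The plan is to follow the tail estimate of \cite[Theorem~1]{DHL} and track precisely how its constants depend on the hole, showing that they stabilize as $h \to 0$. First I would record the baseline: applied to the infinitesimal hole $\tH^{(0)}$, the construction described above (which is exactly that of \cite{DHL} with $b_1,\dots,b_L$ adjoined as artificial critical points and binding period $p \equiv \tau(j)-1$ on $B_\delta(b_j)$) yields a tower $\hDelta^{(0)}$ with $\tm(\hat R^{(0)} > n) \le C_0 \theta_0^n$ for some $\theta_0<1$. The constants $C_0,\theta_0$ depend only on the data $\lambda^*,\Lambda^*,\alpha^*_c,\kappa$ of (C1)--(C3), on the fixed scale $\delta$, on the fixed reference interval $I^*$, and on the finitely many fixed binding periods (the usual ones at $\Crit$ and the artificial $\tau(j)$ at $b_j$); crucially, none of this data involves $h$. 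Lemma~\ref{lemma:good returns} already guarantees the tower $\hDelta^{(\tH)}$ is well-defined for $\tH\in\Ho(h)$ with $h$ small, so the only task is to bound its return-time tail uniformly.

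Next I would compare the $\tH$-construction with the $\tH^{(0)}$-construction for $\tH \in \Ho(h)$. They differ in only two ways: (i) a hole-return cut is made when $\omega_n$ meets $\partial \tH_j$ rather than when $\omega_n$ meets $b_j$, and since $b_j \in \tH_j$ with $\tm(\tH_j) \le h$ one has $\partial \tH_j \subset B_h(b_j)$; (ii) any sub-piece of $\omega_n$ falling inside $\tH_j$ escapes. Difference (ii) only removes mass, hence only decreases $\tm(\hat R^{(\tH)} > n)$, so it works in our favour. For difference (i), the discrepancy between the two partitions at time $\ell$ is confined to those pieces whose image $\omega_\ell$ has an endpoint in $B_h(b_j)$ for some $j$; exactly as in the tail argument of \cite{DHL} (using the expansion bound $|D\T^k(\T(c))| \ge e^{\Lambda^* k}$ along critical orbits from (C2), together with (C1)) one bounds the total $\tm$-measure of such pieces, summed over $\ell \le n$, by $C(h)\,\rho^n$ where $\rho<1$ depends only on the map and $C(h) \to 0$ as $h \to 0$: it is precisely the shrinking of the window $B_h(b_j)$ that forces $C(h)\to 0$.

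Assembling, modulo the escaping set of difference (ii), which only helps, $\{\hat R^{(\tH)} > n\}$ is contained in $\{\hat R^{(0)} > n\}$ together with the discrepancy set of points at which an $h$-scale hole-return cut occurred before time $n$, whence $\tm(\hat R^{(\tH)} > n) \le C_0\theta_0^n + C(h)\rho^n \le C\theta^n$ with $\theta = \max\{\theta_0,\rho\}<1$ and $C = C_0 + \sup_h C(h)<\infty$, both independent of $\tH \in \Ho(h)$ for $h$ small. The main obstacle I expect is the bookkeeping needed to make the comparison of return patterns of \cite{DHL} go through with the extra hole returns inserted --- in particular, ensuring that the binding period triggered by a hole return never overlaps the binding period triggered by a genuine close return to $\Crit_c$, so that the combinatorics of return patterns (and hence the summation scheme producing the exponential rate) is unchanged. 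This is exactly what condition (B1), $\mathrm{dist}(\T^k(c),\partial\tH) > \delta e^{-\alpha^*_c k}$, provides, as noted in Remark~\ref{rem:essential return}; together with the smallness of $h$ (already needed for \eqref{eq:tau} and for the $b_j$ to remain $\delta$-separated from $\Crit_c\cup\Crit_s$), this keeps every constant in the estimate at its $h$-independent value, giving the claimed uniformity. An alternative, more black-box route is to verify that $b_1,\dots,b_L$ satisfy (C1)--(C3) as order-zero singularities of the modified system, with the artificial binding periods, and invoke \cite[Theorem~1]{DHL} directly, noting that its output constants depend monotonically and continuously on the (finitely many, $h$-independent) binding periods and on $\delta$.
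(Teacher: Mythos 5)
Your overall reading of the situation is right (the DHL constants governing the tail bound are determined by $\lambda^*,\Lambda^*,\alpha^*_c,\kappa,\delta,I^*$ and the finitely many binding periods $\tau(j)$, none of which depend on $h$), but the comparison argument you propose to establish uniformity has a real gap. You claim that the discrepancy between the $\tH^{(0)}$-construction and the $\tH$-construction ``is confined to those pieces whose image $\omega_\ell$ has an endpoint in $B_h(b_j)$,'' and that summing their measure over $\ell\le n$ gives $C(h)\rho^n$. Neither half of this holds. First, the discrepancy is \emph{not} confined to pieces currently near $b_j$: once a single $h$-scale cut is made at $\partial\tH_j$ instead of at $b_j$, the two partitions above that piece diverge, the mismatched endpoint is expanded by the dynamics, and within $O(\log(1/h))$ iterates the two descendant families of pieces are combinatorially unrelated (different essential-return schedules, different cut times). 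So the discrepancy set $D_n$ at time $n$ consists of \emph{all} descendants of \emph{every} piece that ever triggered an $h$-scale cut up to time $n$, not just the pieces momentarily near $b_j$. Second, even granting the containment, what you actually need is $\tm\bigl(\{\hat R^{(\tH)}>n\}\cap D_n\bigr)\le C(h)\theta^n$: the raw mass of $D_n$ is cumulative in $n$ and does \emph{not} decay, so the exponential decay has to come from restricting to pieces that have not yet returned to $I^*$ --- but that is precisely the tail estimate for the $\tH$-tower restricted to $D_n$, which is what you are trying to prove. The argument is circular. (A smaller slip: you invoke escape --- ``any sub-piece falling inside $\tH_j$ escapes'' --- but $\hDelta^{(\tH)}$ is the tower \emph{without} holes, built only with the extra cuts, so nothing escapes; this does not affect the gap above.)

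The paper avoids the comparison entirely and instead re-derives the DHL tail estimate \emph{directly} for the $\tH$-construction, treating hole returns as a third type of return alongside essential and inessential returns. The key mechanism is compensation: each hole-return cut multiplies the piece count by at most $3$, but by the choice of binding period $p|_{B_\delta(b_j)}=\tau(j)-1$ and \eqref{eq:tau} it also forces a derivative gain of at least $6$, so in the refined versions of \cite[Lemmas 6,7]{DHL} the factors $6^{-S_{\mbox{\tiny hole}}}$ (size) and $3^{S_{\mbox{\tiny hole}}}$ (count) combine to $2^{-S_{\mbox{\tiny hole}}}\le 1$, leaving formula (21) of \cite{DHL} unchanged. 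The analogue of \cite[Lemma~9]{DHL} then discounts the ``hole-free time'' $n_{\mbox{\tiny hole}}$, which is paid for separately via \eqref{eq:growth in n_hole} with a rate $\lambda_{\mbox{\tiny hole}}$ depending only on $\lambda^*,\Lambda^*$. The resulting exponent $\min\{(\theta^*-\tilde\eta)/\tilde\theta,\lambda_{\mbox{\tiny hole}}\}$ is manifestly $h$-independent, and Lemma~\ref{lemma:good returns} plus condition (B1) ensure the bookkeeping (no hole-return cut inside a critical binding period, returns below level $n_h$ unaffected) is valid for $h$ small. Your ``black-box'' alternative --- check that $b_1,\dots,b_L$ satisfy (C1)--(C3) as order-zero singularities and cite \cite[Theorem~1]{DHL} --- is closer to the paper's spirit, but it is not actually black-box: the $b_j$ are not genuine singularities of $\T$, the binding periods at $b_j$ are declared by fiat rather than derived from a critical order, and the cutting rule at $\partial\tH_j$ is different from the exponential partition used near $\Crit$, so the DHL lemmas have to be re-proved (not merely re-cited) for the augmented system, which is exactly what the paper does.
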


\begin{proof}
Let $\theta_0$ be the exponential
rate of the tail behavior corresponding to $\tH^{(0)}$.
We will show that by choosing $\delta$ and $h$ sufficiently small, we can make
$\theta = \theta(\tH)$ arbitrarily close to $\theta_0$ for all
$\tH \in \Ho(h)$.
We do this by showing that the rates of decay given by a series of
lemmas in \cite{DHL} vary little for small $h$.

{\bf Lemma 1 of \cite{DHL}:}  Choose $h$ small enough that $n_h$
from the proof of Lemma~\ref{lemma:good returns} satisfies $n_h \geq t^*$ in
Lemma 1.  Then Lemma 1 holds with the same rate since returns in the
middle of large pieces are not affected by the hole before time $n_h$.

{\bf Lemma 6 of \cite{DHL}:}  The notation ${\mathcal E}_{n,S}(\omega)$
stands for the set of subintervals
within an interval $\omega$ of size $\delta/3 < |\omega| < \delta$
that have not grown to size $\delta$ by time $n$, and have essential return
depths summing to $S$
within these $n$ iterates.

This lemma estimates the size of any interval
$\omega' \in \E_{n,S}$.
Let us denote the number of hole returns used in the history
of $\omega'$ by $S_{\mbox{\tiny hole}}$.  Define
$\E_{n,S,S_{\mbox{\tiny hole}}}$ to be the set of subintervals
$\omega' \in \E_{n,S}$ such that $\omega'$ has $S_{\mbox{\tiny hole}}$
hole returns in its history up to time $n$.

Every hole return, i.e., a cut at $\partial \tH_j$, is followed by a
binding period of length $\tau(j)$ in which derivatives grow by an extra
factor of $6$ by \eqref{eq:tau}.
Since Lemma 6 is concerned only with derivatives, and not with the actual
cutting, the conclusion of Lemma 6 becomes:
For every $n \geq 1$, $S \geq 1$ and $S_{\mbox{\tiny hole}} \geq 1$ and
$\omega' \in {\mathcal E}_{n,S,S_{\mbox{\tiny hole}}}$ we have
\begin{equation}\label{eq:Lemma6}
|\omega'| \leq \kappa^{-1} e^{-\theta^* S} 6^{-S_{\mbox{\tiny hole}}}. \nonumber
\end{equation}
where $\theta^*$ replaces the $\theta$ used in \cite[Lemma 2]{DHL}.

{\bf Lemma 7 of \cite{DHL}:}
This lemma relies on combinatorial estimates to
obtain an upper bound on the number of pieces which can grow to
size $\delta$ at specific times.
By specifying the number of hole returns by $S_{\mbox{\tiny hole}}$
and using the fact that intervals are cut into at most $3$ pieces
during a hole return,
we can adapt the conclusion of Lemma 7 to
\[
\# {\mathcal E}_{n,S,S_{\mbox{\tiny hole}}}(\omega) \leq e^{\tilde \eta S} 3^{S_{\mbox{\tiny hole}}}.
\]

Combining Lemmas 6 and 7 in this form gives
\begin{eqnarray}\label{eq:21}
|{\mathcal E}_{n,S}(\omega)| &=& \sum_{S_{\mbox{\tiny hole}} \geq 0}|{\mathcal E}_{n,S,S_{\mbox{\tiny hole}}}(\omega)| \nonumber \\
&\le& \sum_{S_{\mbox{\tiny hole}} \geq 0}  \kappa^{-1} e^{-\theta^* S}
6^{-S_{\mbox{\tiny hole}}} e^{\tilde \eta S} 3^{S_{\mbox{\tiny hole}}} = \kappa^{-1} e^{-(\theta^* - \tilde \eta)S}
\end{eqnarray}
which is precisely formula (21) in \cite{DHL}.

The {\em free time} of an interval $\omega'$ are all the iterates
not spent in a binding period.
We suppose that $\omega'$ escapes to `large scale'
at time $n$ (i.e., $|\T^n(\omega')| \geq \delta$) and consider its history until time
$n$.
If $\omega'$ is cut very short at a hole return, say
$\partial \T^m(\omega') \cap \partial \tH_j \neq \emptyset$,
then we first have a binding period of length $p(x) = \tau(j)-1$, and the
free period after that lasts until either: (i) $\omega'$ reaches large scale,
(ii) $\omega'$ has the next artificial cut near $b_j \in \Crit_{\mbox{\tiny hole}}$, (iii) $\omega'$ has an inessential return near
$c \in \Crit_c \cup \Crit_s$.
or (iv) $\omega'$ has the next essential return near
$c \in \Crit_c \cup \Crit_s$.  In case (iv), $\T^k(\omega')$ covers at least
three intervals in the exponential partition of $B_\delta(c)$
as in \cite[page 433]{DHL}.

Let us call the time from iterate $m+\tau(j)-1$ to the next occurrence
of (i), (ii) or (iv) the {\em extended free period} of $\omega$
after iterate $m+\tau(j)-1$ .
So this includes binding and free periods after inessential
returns to $B_\delta(c)$ for $c \in \Crit_c \cup \Crit_s$.
These are the returns where
$\T^k(\omega')$ is too short to result in a cut.
Condition (B1) implies that when such an inessential return occurs,
the next cut or inessential return will not occur until after
the binding period associated to $\text{dist}(\T^k(\omega'), c)$.
This binding period will restore the small derivative incurred at time $k$
due to \cite[Lemma 2]{DHL}.
Hence there is $\lambda_{\mbox{\tiny hole}}$, depending only on $\lambda^*$ and $\Lambda^*$
from conditions (C1) and (C2), such that
\begin{equation}
\label{eq:growth in n_hole}
|D\T^\ell(x)| \geq e^{\lambda_{\mbox{\tiny hole}} \ell}
\end{equation}
for each $x \in \T^{m+\tau(j)-1}(\omega')$ and
$\ell$ is the length of this extended free period.
We let $n_{\mbox{\tiny hole}}$ denote the sum of extended
free periods directly following the binding periods due to hole returns in the
history of $\omega'$ up until time $n$.
With this notation, we make adaptations to the remaining lemmas.

{\bf Lemma 8 of \cite{DHL}:} This lemma can be changed to: there exists $n_\delta$
such that for every $\omega' \in {\mathcal E}_{n,0}$
with $S_{\mbox{\tiny hole}} = 0$, $\omega' = \omega$ and $n \leq n_\delta$.
The reason is that intervals of definite size cannot remain small forever
if they are not cut during an essential return or hole return,
and in fact the $n_\delta$ can be taken equal to the $n(\delta)$
used in condition (B2).

{\bf Lemma 9 of \cite{DHL}:} This lemma can be restated as:
for all $n \geq 1$ and $S \geq 0$ such that ${\mathcal E}_{n,S} \ne \emptyset$, we have
\[
S \geq (n-n_{\mbox{\tiny hole}}-n_\delta)/\tilde \theta.
\]
In other words, we disregard the hole free time $n_{\mbox{\tiny hole}}$.
The proof is basically the same as in \cite{DHL} if we keep in mind that at an
essential return to $c \in \Crit_c \cup \Crit_s$
at time $\ell$, following a hole return,
the size of the interval $\T^\ell(\omega')$ that emerges from the cut
at this essential return depends only on the distance of $\T^\ell(\omega')$
to $c$.

Now Lemmas 8 and 9 of \cite{DHL} and \eqref{eq:21} combine to give
for ${\mathcal E}_n(\omega) := \cup_{S \geq 0} {\mathcal E}_{n,S}(\omega)$:
\begin{eqnarray*}
| {\mathcal E}_n(\omega) | &=&
\sum_{0 \leq n_{\mbox{\tiny hole}} \leq n}
\ \ \sum_{S \geq (n-n_{\mbox{\tiny hole}} - n_\delta)/\tilde\theta}
| {\mathcal E}_{n,S,}(\omega)| \\
&\leq & \sum_{0 \leq n_{\mbox{\tiny hole}} \leq n} e^{-\lambda_{\mbox{\tiny hole}} n_{\mbox{\tiny hole}}} \sum_{S \geq (n-n_{\mbox{\tiny hole}} - n_\delta)/\tilde\theta}
\kappa^{-1} e^{-(\theta^*-\tilde \eta) S} \\
& \leq & \tilde C_1 e^{(\theta^*-\tilde \eta) n_\delta/ \tilde \theta}
e^{ - \min\{ (\theta^*-\tilde \eta)/\tilde \theta\ , \ \lambda_{\mbox{\tiny hole}} \} n},
\end{eqnarray*}
for some $\tilde C_1$ as in  the formula given near the bottom of \cite[page
444]{DHL}.\footnote{with the factor $\kappa^{-1}$ inserted where it is missing in
\cite{DHL}.}

Having established Lemmas 6 to 9,  the rest of the proof in \cite{DHL}
goes through basically unchanged since the decay in $\hat R^{(\tH)}$
depends only on the rates in these
lemmas and distortion estimates which are not affected by $\tH$.
We see that $\theta(\tH)$ can be made arbitrarily close to $\theta_0$ for
$h$ sufficiently small.
\end{proof}

We have shown that in the presence of additional cuts introduced by
$\partial \tH$, we retain some uniform control over the induced towers
$(\F^{(\tH)}, \hDelta^{(\tH)})$.  We are now ready to lift the holes
into the towers and consider the open systems so defined.

We define the hole in the tower and the return time with hole to be
\[
H = \{ Z \in \Z : \pi Z \subseteq \tH \}
\qquad \mbox{and} \qquad
R(x) = \min \{ \hat R(x), \min\{ j : \T^j(x) \in \tH \} \}.
\]
For any partition element $Z_{\ell,j} = H_{\ell,j}$ that is identified
as a hole,
we delete all levels in the tower above $Z_{\ell,j}$ since nothing is mapped
to those elements once the hole is introduced.  We denote the remaining
tower with holes by $\Delta$ and define $F = \F|_{\Delta \cap \F^{-1} \Delta}$
to be the corresponding tower map.

In order to invoke the conclusions of Proposition~2.6 for $(F, \Delta)$,
we must check that its hypotheses, (P1)-(P3) and (H1), are satisfied.
We then check conditions (A1) and (A2) in order to project the
convergence results from the tower to the underlying system.

Properties (P1)-(P3) are
automatic for $(F, \Delta, R)$ since they hold for $(\F, \HDelta, \hat R)$.

\medskip
\noindent
{\em Step 1. Condition (H1).}
We split the sum in (H1) into pieces that
encounter $\tH$ during their bound period and those that encounter it
when they are free,
\[
\sum_\ell m(H_\ell) \beta^{-\ell} = \sum_{\mbox{\scriptsize bound}} m(H_\ell) \beta^{-\ell}
+ \sum_{\mbox{\scriptsize free}} m(H_\ell) \beta^{-\ell} .
\]

To estimate the bound pieces, we use the slow recurrence condition given by
(B1).
If $\omega \subset I^*$ is some partition element, and
 $c \in \Crit_c$ the last critical point visited by $\omega$ before
$\omega$ falls into the hole, then
dist$(\T^\ell c, \partial \tH_j) \geq \delta e^{-\alpha^*_c \ell}$ for each
$j$.
Therefore, if $\T^\ell \omega \cap \tH_j \neq \emptyset$,
we must have $\delta e^{-\alpha^*_c \ell} < \tm (\tH_j)$ and so
$\ell > - (1/\alpha^*_c) \log (\tm(\tH_j)/\delta)$.  Thus by Property (P1),
\begin{eqnarray}
\label{eq:bound escape}
\sum_{\mbox{\scriptsize bound}} m(H_\ell) \beta^{-\ell}
&\leq& \sum_{\ell > - \frac1{\alpha^*_c} \log (\tm(\tH)/\delta)} m(\HDelta_\ell) \beta^{-\ell} \nonumber \\
&\leq& \sum_{\ell > - \frac1{\alpha^*_c} \log (\tm(\tH)/\delta)} C \theta^\ell \beta^{-\ell}
\leq \frac{C'}{\delta} \, \tm(\tH)^{\frac1{\alpha^*_c} \log(\theta^{-1} \beta)}.
\end{eqnarray}

To estimate the free pieces, we will need some facts about the tower without
holes, $(\F, \HDelta)$.
It was shown in \cite{young} that $\F$ admits a unique absolutely continuous invariant measure $\eta$ with density $\rho \in \B_0$, $\rho \geq a >0$.  Moreover, $\pi_* \eta = \teta$ is the unique
SRB measure for $\T$.  By Section~4.1, $\trho = \ppi \rho$ is the density of
$\teta$.

Notice that $\rho|_{\HDelta_0}$ is an invariant density for $\F^{\hat R}$
so that $\trho_0 := \ppi (\rho_{\HDelta_0})$ is an invariant density for $\T^{\hat R}$.
Since $\pi' \equiv 1$ on $\HDelta_0$, we have $a \leq \trho_0 \leq A$.
This implies that we
can also obtain the invariant density $\trho$ by {\em pushing forward} $\trho_0$
under iterates of $\T$.

It is clear that pushing forward $\trho_0$ will result in spikes above the orbits of
the critical points, hence $\trho$ is not bounded on $\I$.  However, when an
interval $\omega \subset \pi(\HDelta_0)$ is free at time $n$,
condition (C2) and \cite[Lemma 1]{DHL} imply that
the push forward of the density on $\omega$ at time $n$ will be uniformly bounded.

Define neighborhoods $N_k(\T^kc)$ of radius
$\delta e^{-2\alpha^*_ck}$ for each $c \in \Crit_c$.  These are precisely the
points starting in $B_\delta(c)$ whose orbits are still bound to $c$ at time $k$.
From the above considerations, it is clear that outside of the set
$\cup_{c \in \sCrit_c} \cup_{k \geq 1} N_k(\T^kc)$, the density $\trho$ is bounded.
{\em This is the sum of the push forwards of $\trho_0$ on free pieces.}  Thus,
we may define a measure
\[
\teta_{\mbox{\scriptsize free}} = \sum_{(\ell,j): Z_{\ell,j} \ \mbox{\scriptsize is free}}
                \pi_*\eta(Z_{\ell,j})
\]
whose density with respect to Lebesgue, $\trho_{\mbox{\scriptsize free}}$,
is bounded on $\I$.
Then since $\rho \geq a > 0$, we have
\[
\sum_{\mbox{\scriptsize free}} m(\hlj) \leq \sum_{\mbox{\scriptsize free}}  \eta(H_{\ell,j})/a = \teta_{\mbox{\scriptsize free}}(\tH)/a
\leq C \tm(\tH).
\]
Now set $P= - \log \tm (\tH)$.
We estimate the contribution from free pieces by
\begin{eqnarray}
\sum_{\mbox{\scriptsize free}} m(\hlj) \beta^{-\ell} &
= & \sum_{\mbox{\scriptsize free: } \ell > P} m(\hlj) \beta^{-\ell}
+ \sum_{\mbox{\scriptsize free: } \ell \leq P} m(\hlj) \beta^{-\ell}   \nonumber  \\
& \leq & \sum_{\mbox{\scriptsize free: } \ell > P} C \theta^\ell \beta^{-\ell}
+ \beta^{-P} \sum_{\mbox{\scriptsize free: } \ell \leq P} m(\hlj)  \nonumber \\
& \leq & C' (\theta \beta^{-1})^P + C'' \beta^{-P} \tm(\tH) \nonumber \\
& \leq & C' \tm(\tH)^{\log (\beta \theta^{-1})} + C'' \tm(\tH)^{1+ \log \beta}  .
\label{eq:free escape}
\end{eqnarray}

Putting together \eqref{eq:bound escape} and \eqref{eq:free escape}, we see that
the left hand side of (H1) is proportional to
$\tm(\tH)^\gamma$, for some $\gamma > 0$.  This quantity
can be made sufficiently small to satisfy (H1) by choosing $\tm(\tH)$ small
since $\theta$ (and hence $\beta$) are independent of $\tH$ by
Lemma~\ref{lemma:uniform decay}.

\medskip
\noindent
{\em Step 2. Property (A1).}
The bounded distortion required by (A1)(b) is satisfied by the cutting of pieces
introduced in the construction of $\Delta$ (see \cite[Proposition 3]{DHL}).
The expansion
required by (A1)(a) follows from two estimates:
property (C1) guarantees that
starting at any $x \notin B_\delta(\Crit)$, there is exponential expansion upon entry to
$B_\delta(\Crit)$; \cite[Lemma 2]{DHL} guarantees that exponential expansion occurs
at the end of a binding period.  Since any return must occur at a free entry to
$B_\delta(c^*)$, we may concatenate these estimates as many times as needed
in order to obtain (A1)(a) at any return time $\hat R_n$.  However, once the
hole is introduced, a partition element may fall into the hole during a bound period
and so the return time with hole, $R$, may be declared when there has not been
sufficient expansion to satisfy (A1)(a).
Since this property is only needed
to prove Lemma 4.1, we give an alternate proof of this lemma which uses
(A1)(a) only for $\hat R$.
\medskip

\begin{proof}[Proof of Lemma 4.1 for $(F,\Delta)$]
First note that because (A1) is satisfied by $(\F, \HDelta)$,
Lemma~4.1
holds for lifts $\tf \circ \pi$ of $\tf \in C^{\bar \alpha}(\I)$, with
$\bar \alpha \geq - \log \beta/ \log \tau$.  Here $\tau$ is the rate of expansion from (A1)
and $\beta$ is the constant chosen for the symbolic metric on $\HDelta$
(see Section 2.1.1).

The separation time $\hat s(\cdot,\cdot)$ is shortened
by the introduction of the hole in the tower so that the new separation time
satisfies $s(x,y) \leq \hat s(x,y)$.  Thus the separation
time metric is also loosened on $\Delta$:
\begin{equation}
\label{eq:loose metric}
d_\beta(x,y) := \beta^{s(x,y)} \geq \beta^{\hat s(x,y)} =: \hat d_\beta(x,y)  .
\end{equation}
Thus if $f$ is Lipschitz with respect to $\hat d_\beta$ on $\HDelta$,
its restriction to $\Delta$ is also Lipschitz with respect to $d_\beta$.

Now for $\tf \in C^{\bar \alpha}(\I)$, with $\bar \alpha \geq - \log \beta/ \log \tau$, we have
$\tf \circ \pi \in \B_0(\HDelta)$ by Lemma~4.1.
Then by \eqref{eq:loose metric}, $\tf \circ \pi \in \B_0(\Delta)$ as well.
\end{proof}

\medskip
\noindent
{\em Step 3. Property (A2).}
We focus first on finding an index set $J \subset \N \times \N$ such that
(A2)(a) is satisfied.  The following lemma is the analogue of \eqref{eq:no holes mixing}
for $T$, the map with holes. (See also \cite[Lemma 5.2]{demers logistic}.)

\begin{lemma}
\label{lemma:growth}
Let $\delta$ be the radius of $B_\delta(c^*)$ as above.
Let $n_0 = n(\delta)$ be defined by
\eqref{eq:no holes mixing}.  For $h$ sufficiently small,
given any interval $\omega \subset I$ such that $|\omega| \geq \delta/3$,
we have
\[
\bigcup_{i=0}^{2n_0} T^i \omega \supset I \quad  \bmod{0}
\]
\end{lemma}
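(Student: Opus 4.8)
The plan is to derive Lemma~\ref{lemma:growth} from the closed-system mixing property \eqref{eq:no holes mixing} by applying it twice, with the hole's smallness and generic placement controlling the mass it destroys. Write $n_0=n(\delta)$, a constant depending only on $\delta$ (hence independent of $h$), and $B_r=(c^*-r,c^*+r)$. Since $\T$ is piecewise monotone with finitely many branches and $\tH$ is a finite union of intervals, each $T^i\omega=\T^i(\omega\cap I^i)$ is a finite union of intervals, so $I\setminus\bigcup_{i=0}^{2n_0}T^i\omega$ is a finite union of intervals and the assertion ``$\supset I\bmod 0$'' is equivalent to: no nondegenerate subinterval of $I$ is disjoint from $\bigcup_{i=0}^{2n_0}T^i\omega$.

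I would first record two consequences of $\tH$ being small, valid once $h$ is below a threshold depending only on $\delta$ and the fixed points $b_1,\dots,b_L$. By the genericity (B2)(a), the finite orbit segment $O=\{\T^mb_j:0\le m\le 2n_0,\ 1\le j\le L\}$ avoids $\Crit_c\cup\Crit_s\cup\Crit_{\mbox{\tiny hole}}$ and the point $c^*$; since these points lie pairwise at distance $\ge\delta$ and $c^*\in\Crit$, no $b_j$ lies in $B_\delta$, so $B_\delta\cap\tH=\emptyset$ for small $h$. Moreover $\T$ is nonflat, so $\T^m$ carries a tiny interval to a H\"older-tiny one, and $m\le 2n_0$ is bounded; hence $W:=\bigcup_{m=0}^{2n_0}\T^m(\tH)$ is a finite union of boundedly many disjoint intervals, each inside a prescribed small neighbourhood of a point of $O$, with $\tm(W)$ arbitrarily small.

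Now the double application. By \eqref{eq:no holes mixing}(ii) there are $\omega_1\subseteq\omega$ and $0<n_1\le n_0$ with $\T^{n_1}$ mapping $\omega_1$ diffeomorphically onto $B_{3\delta}$. If $x\in\omega_1$ is killed by $\tH$ within $n_1$ steps, say $\T^jx\in\tH$ with $1\le j\le n_1$, then $\T^{n_1}x=\T^{n_1-j}(\T^jx)\in\T^{n_1-j}(\tH)\subseteq W$; since $\T^{n_1}|_{\omega_1}$ is injective, $T^{n_1}(\omega_1\cap I^{n_1})=B_{3\delta}\setminus U_1$ with $U_1\subseteq W$. Applying \eqref{eq:no holes mixing}(i) to the interval $B_{3\delta}$ (of length $6\delta\ge\delta/3$) gives $n_2\le n_0$ with $\T^{n_2}(B_{3\delta})\supseteq\I$; chasing $\tH$ through the second step by the same computation, together with $T^{n_1+n_2}\omega\supseteq T^{n_2}(T^{n_1}\omega)\supseteq T^{n_2}(B_{3\delta}\setminus U_1)$, yields $I\setminus\bigcup_{i=0}^{n_1+n_2}T^i\omega\subseteq W\cap I$. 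As $n_1+n_2\le 2n_0$, it remains only to show that the boundedly many short intervals making up $W\cap I$, each near some orbit point $\T^mb_j$ with $1\le m\le 2n_0$, are actually covered in full by some $T^i\omega$, $i\le 2n_0$.

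This last point is the crux, and it is where the genericity (B2)(b) enters. The difficulty is precisely that forward images of $\tH$ have positive measure, so the above only shows the complement is \emph{small}, whereas ``$\bmod 0$'' requires it to have \emph{empty interior}: one must rule out a persistent subinterval $J$ near the orbit of some $b_j$ all of whose backward branches into $\omega$ are destroyed by $\tH$. Condition (B2)(b) supplies, for each $b_j$, a preimage of $\T b_j$ missed by the orbit of every $b_k$ for the first $n(\delta)$ iterates; pulling $J$ back along branches built from these clean preimages exhibits a piece of $\omega$ that survives the hole and maps onto $J$, placing $J$ inside $T^i\omega$ for a suitable $i\le 2n_0$. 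I expect this step to be the main obstacle; the escape bookkeeping and the small-hole estimates above are more routine, the latter relying only on $n_0$ being independent of $h$, so that $W$ has boundedly many components of uniformly controllable total measure.
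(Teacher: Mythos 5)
Your overall architecture is sound, and you have correctly identified both the role of \eqref{eq:no holes mixing} (cover everything as in the closed system, modulo a small exceptional set near forward images of $\tH$) and the role of (B2) (find ``clean'' preimages of $\T b_j$ to recapture what the hole eats). However there are two issues.

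First, the step you call ``the crux'' is indeed the heart of the lemma, and you have left it as a sketch. The paper's proof of this step is short but has real content: for the obstructing component $\tH_k$, (B2)(b) yields a preimage $g_{k,j_k}$ of $\T b_k$ that is missed by the orbit $\{\T^\ell b_k\}_{1\le\ell\le n_0}$; hence for small $h$ the ball $B_h(g_{k,j_k})$ avoids $\T^\ell(\tH)$ for $1\le\ell\le n_0$, so the branch of $\omega$ that covers it (using \eqref{eq:no holes mixing}(i) once more) survives, giving $B_h(g_{k,j_k})\subset T^{n_0}\omega$. Then the key identity $\T(g_{k,j_k})=\T(b_k)$ lets one push forward: $T^{i_k}B_h(g_{k,j_k})\supseteq\T^{i_k}\tH_k$, which meets the uncovered interval $A$. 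Your write-up gestures at ``pulling $J$ back along branches built from these clean preimages'' but does not record the identity $\T g_{j,i}=\T b_j$ or the verification that a surviving branch of $\omega$ actually reaches $B_h(g_{j,i})$ within $n_0$ steps, which is exactly where (B2)(a)--(b) and the smallness of $h$ are used.

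Second, and more concretely, your bookkeeping via a \emph{double} application of \eqref{eq:no holes mixing} breaks the $2n_0$ budget. You show $I\setminus\bigcup_{i\le n_1+n_2}T^i\omega\subseteq W=\bigcup_{m=0}^{2n_0}\T^m(\tH)$, so a persistent interval $J$ may lie near $\T^m b_j$ with $m$ as large as $2n_0-1$. Recapturing $J$ through the clean preimage then takes up to $n_0$ iterates to land $T^{n_0}\omega\supseteq B_h(g_{j,i})$ plus $m$ more iterates forward, totalling up to $3n_0-1$. The paper avoids this by applying \eqref{eq:no holes mixing}(i) \emph{once} directly to $\omega$: since $\T^{n_0}\omega\supseteq\I$, any interval $A$ missed by $\bigcup_{i\le n_0}T^i\omega$ must satisfy $A\cap\T^{i_k}\tH_k\ne\emptyset$ with $i_k+i_k'=n_0$, so $i_k\le n_0$, and the recapture time is $n_0+i_k\le 2n_0$. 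You should drop the intermediate diffeomorphic pass onto $B_{3\delta}$ --- it buys nothing here and costs an extra factor of $n_0$ in the iterate count.
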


\begin{proof}
Suppose there exists an interval $A$ such that
$A \cap (\cup_{i=0}^{n_0} T^i \omega) = \emptyset$.
Since $A \subseteq \T^{n_0} \omega$, we must have
$A \cap \T^{i_k} \tH_k \neq \emptyset$ for some $\tH_k$
such that $\tH_k \cap \T^{i_k'} \omega \neq \emptyset$ for some integers
$i_k, i_k'$ with $i_k + i_k' = n_0$.
In other words, the piece of
$\omega$ that should have covered part of $A$ fell into $\tH_k$
before time $n_0$.

Condition (B2)(b) implies that there exists $1 \leq j_k \leq k$ such that
\[
\min_{1 \leq \ell \leq n_0} \mbox{dist}(g_{k,j_k}, \T^\ell b_k)  > 0 .
\]
Thus for small $h$, we have $\T^\ell(\tH_k) \cap B_h(g_{k,j_k}) = \emptyset$
for all $1 \leq \ell \leq n_0$.  So $B_h(g_{k,i})$ is covered by time $n_0$
under $T$, i.e., $B_h(g_{k,j_k}) \subset T^{n_0} \omega$.

Since $\T(b_k) = T(g_{k,j_k})$, condition (B2)(a) says that $B_h(g_{k,j_k})$
cannot fall into the hole before time $n_0$ for small $h$.  Thus
$T^{i_k}B_h(g_{k,j_k}) \supseteq \T^{i_k}\tH_k$ and we conclude that
the part of $A$
which should have been covered by the piece of $\omega$ that
fell into $\tH_k$ is at the latest covered at time $n_0 + i_k$ by an interval
passing through $B_h(g_{k,j_k})$.

Doing this for each $k$, we have
$A \subset \cup_{k=1}^L T^{i_k}B_h(g_{k,j_k})$ and so
$A \subset \cup_{i=0}^{2n_0} T^i \omega$.
\end{proof}

\medskip
Lemma~\ref{lemma:growth} implies that $I$ can be covered by the
projection of finitely many levels of $\Delta$, say the first $N$.
If $\pi(Z_{\ell,j}) \subset \pi(Z_{\ell',j'})$ and both $\ell, \ell' \leq N$,
we eliminate $(\ell,j)$ from our index set, but retain $(\ell',j')$.
By \eqref{eq:disjoint}, the remaining index set
$J \subset \{0, \ldots, N-1\} \times \N$ satisfies (A2)(a) and (A2)(b).
As before, set $\Z_J = \cup_{(\ell,j) \in J} \zlj$.

By \eqref{eq:pi}, $J\pi(F^\ell x) = (T^\ell)'(\pi x)$ so that we are only concerned
with the first $N$ iterates of $T^\ell$.  It is clear that if Crit$_s = \emptyset$
and $T$ is globally $C^2$, then $J\pi|_{\Z_J} \in \B_0$ by
Lemma~\ref{lem:lift} and so (A2)(c) is satisfied.

In the case when Crit$_s$ is nonempty,
(A2)(c) does not hold and so Proposition~\ref{prop:continuous} must be modified.
We do this in Step 5 of the proof when we address the convergence
property for the \accim.

\medskip
\noindent
{\em Step 4.  $(F,\Delta)$ is mixing.}
Since we have constructed a tower over a single base, it suffices to show
that g.c.d.$(R)=1$.
The fact that $\T$ is nonrenormalizable guarantees that for the
infinitesimal hole $\tH^{(0)}$, $(\F^{(0)}, \Delta^{(0)})$ can be
constructed to be mixing by making g.c.d.$(\hat R^{(0)}) = 1$.
Indeed, \eqref{eq:no holes mixing}
implies that as in the
case of expanding maps, we can simply wait one time step
on a given return to destroy any periodicity in $\hat R^{(0)}$.
Once this is accomplished, Lemma~\ref{lemma:good returns} implies that
$(\F^{(\tH)}, \hDelta^{(\tH)})$
is mixing for $\tH \in \Ho(h)$ with $h$ small enough that g.c.d.$(\hat R^{(\tH)})$
is still $1$ (by making $n_h$ sufficiently large).  But since holes
cannot affect returns before level $n_h$ in $\Delta$, the tower with holes,
we have that g.c.d.$(R)=1$ as well.

\medskip
\noindent
{\em  Step 5.  Convergence property.}
We have already verified in Steps 2 and 3 that $(F,\Delta)$
satisfies (A1) and there is an index set $J$ satisfying (A2)(a) and (A2)(b).

By \eqref{eq:pi}, the problem spots where $(T^\ell)'$ (and therefore $J\pi$) are unbounded
are neighborhoods of $T^k(c)$  for $c \in \Crit_s$, $k\geq 1$.  In fact,
we only need to address the iterates of $c \in \Crit_s$ up until the time when
a neighborhood of $T^k(c)$ is covered by some other element in the tower
on which the derivative is bounded.  Since $I$ is covered by the first $N$ levels of
$\Delta$, we need consider
at most the first $N$ iterates of $c \in \Crit_s$.

Notice that if a neighborhood $A$ of $T^k(c)$ can only be reached
by an interval
$\omega$ originating in a neighborhood of $c$, then due to the exponential partition
of $B_\delta(c)$ which subdivides $\omega$, there are countably many
elements $Z \subset \Delta_\ell$ whose projections cover $A$ and in which $|\pi'|$ becomes unbounded the closer that $\pi Z$ is to $T^k(c)$.

Fix $\ve >0$ and let $\Na_\ve(c)$ denote the $\ve$-neighborhood of those iterates of $c
\in \Crit_s$ which can only be reached by passing through $B_\delta(c)$. Let $\Na_\ve =
\cup_{c \in \sCrit_s} \Na_\ve(c)$ and let $J_1 \subset J$ be the index set of those
elements $Z$ such that $\pi Z \subset \Na_\ve$. Denote by $1_\ve$ the indicator function
of the set $\{ y \in I : y \in \pi Z_{\ell,j}$, $(\ell,j) \in J_1\}$.

Now let $\tf \in \G$ and write $\tf = \tf_0 + \tf_\ve$ where
$\tf_\ve := \tf \cdot 1_\ve$ and $\tf_0 = \tf - \tf_\ve$.  We define a lift of $\tf$ by
$f = \tf \circ \pi \cdot J\pi$
on elements of $J$ as in the proof of Proposition~\ref{prop:continuous}.
The lifts $f_0$ and $f_\ve$ are defined analogously.
Although $f \notin \B$, we do have $f_0 \in \B_0$ by Proposition~\ref{prop:continuous}
since $\tf_0 \circ \pi \equiv 0$ on those elements in which $J\pi$ becomes
unbounded.  Using Corollary~\ref{cor:convergence} precisely as in
Section~\ref{expanding},
we have
\begin{equation}
\label{eq:0-conv}
\lim_{n\to \infty} \lambda^{-n} \Lp^n \tf_0 = c(\tf_0) \tp
\end{equation}
where convergence is in the $L^1$-norm and $c(\tf_0)>0$.  Since
\[
\lambda^{-n} \Lp^n \tf = \lambda^{-n} \Lp^n \tf_0 + \lambda^{-n} \Lp^n \tf_\ve ,
\]
our strategy will be to show that the $L^1$-norm of the second term above
can be made uniformly small
in $n$ by making $\ve$ small.  This will imply that $\lambda^{-n} \Lp^n \tf \to c(\tf) \tp$
in $L^1(\tm)$
where $c(\tf) = \lim_{\ve \to 0} c(\tf_0) > 0$, implying the desired convergence result.

Estimating $|\Lp^n \tf_\ve|_{L^1(\tm)}$ is equivalent to estimating $|\Lp^n
f_\ve|_{L^1(m)}$.
\begin{equation}
\label{eq:int est} \lambda^{-n} \int \Lp^n f_\ve \, dm = \lambda^{-n} \sum_{(\ell, j)
\in J_1} \int_{\Delta^n \cap \zlj}  f_\ve \, dm \leq \lambda^{-n} \sum_{(\ell,j) \in
J_1} |\tf|_\infty |J\pi_{\ell,j}|_\infty m(\Delta^n \cap \zlj).\end{equation} Since we
are concerned with finitely many problem spots where the derivative blows up, it
suffices to show that the sum in \eqref{eq:int est} over elements in $J_1$ corresponding
to one of the problem spots is proportional to $\ve$. For simplicity, we fix $c \in
\Crit_s$ and denote by $\A_\ve$ the set of elements in $\Delta$ projecting to the
$\ve$-neighborhood of $\T(c)$.  There exists $k_\ve >0$ such that if $Z \in \A_\ve$,
then $\pi Z$ lies in an element of the partition $E^-_k = \T(c - e^{-k+1}, c - e^{-k})$
and $E^+_k = \T(c + e^{-k}, c + e^{-k+1})$ with $k \geq k_\ve$.

For $\zlj \in \A_\ve$, let $Z_{0,j} = F^{-\ell}\zlj$.  We split the sum in \eqref{eq:int est} into those elements $Z \in \A_\ve$ with
$R(Z) \geq n$ and those with $R(Z) < n$.  Let $0<\ell_c<1$ denote the critical
order of $c$.

We estimate terms with $R(Z)<n$ using Corollary~\ref{cor:uniform cyl}
and the bounded distortion given by (A1)(b) for $J\pi$.
\begin{equation}
\label{eq:R<n}
\begin{split}
&\lambda^{-n}  \sum_{\zlj \in \A_\ve : R(\zlj) < n} |J\pi_{\ell,j}|_\infty  m(\Delta^n \cap \zlj)
\leq \sum_{Z_{\ell,j} \in \A_\ve : R(Z_{\ell,j}) < n} |J\pi_{\ell,j}|_\infty C \lambda^{-R(Z_{\ell,j})} m(Z_{\ell,j}) \\
& \leq C' \int_{\A_\ve:R(z)<n} J\pi \, \lambda^{-R} \, dm
\leq C' \Big( \int_{\A_\ve:R(Z)<n} (J\pi)^p \, dm \Big)^{1/p}
\Big( \int_{\A_\ve:R(Z)<n} \lambda^{-Rp/(p-1)} \, dm \Big)^{1-1/p}
\end{split}
\end{equation}
where $1 < p < 1/(1-\ell_c)$.
By \eqref{eq:pi}, we have
$J\pi_{\ell,j} = |(T^\ell)'|_{Z_{0,j}}|$ so that if $\pi(\zlj) \subset E^\pm_k$, we have
$J\pi_{\ell,j} \approx e^{k(1-\ell_c)}$.  Also, $J\pi_{\ell,j}$ has bounded distortion
across all $\zlj$ that project into a single $E^\pm_k$.  So we estimate the first factor in
\eqref{eq:R<n} by
\begin{equation}
\label{eq:jpi bound}
\begin{split}
\sum_{\zlj \in \A_\ve:R(Z)<n} |J\pi_{\ell,j}|^p_\infty m(Z_{\ell,j})
& \leq \sum_{k \geq k_\ve} \sum_{\pi Z \subset E_k^\pm} C e^{k(1-\ell_c)p} m(Z_{0,j}) \\
& \leq \sum_{k \geq k_\ve} C e^{k(1-\ell_c)p} e^{-k}
\leq C e^{-k_\ve(1-p(1-\ell_c))}  .
\end{split}
\end{equation}
To estimate the second factor in \eqref{eq:R<n}, notice that if $\pi Z \subset E^\pm_k$,
then $R(Z)>\log k$.  Thus
\begin{equation}
\label{eq:lambda bound}
\begin{split}
\sum_{\zlj \in \A_\ve:R(Z)<n} \lambda^{-R(Z)p/(p-1)} m(Z)
& \leq \sum_{r>\log k_\ve} \sum_{R(Z)=r}  \lambda^{-rp/(p-1)} m(Z) \\
& \leq C \sum_{r>\log k_\ve} (\lambda^{- p/(p-1)} \theta)^r
\leq C' (\lambda^{- p/(p-1)} \theta)^{\log k_\ve} .
\end{split}
\end{equation}
To estimate the terms of \eqref{eq:int est} with $R(Z) \geq n$, notice that
for such $Z$, $\Delta^n \cap Z = Z$.
\[
\begin{split}
\lambda^{-n} \sum_{Z \in \A_\ve:R(Z) \geq n} |J\pi_{\ell,j}|_\infty m(Z)
& \leq C \lambda^{-n} \sum_{k \geq k_\ve} e^{k(1-\ell_c)}
\sum_{\pi Z \subset E^\pm_k: R(Z) \geq n} m(Z) \\
& \leq C \lambda^{-n} \sum_{k \geq k_\ve} e^{k(1-\ell_c)}
m(\A_\ve \cap \pi^{-1}E^\pm_k \cap \{R\geq n\}).
\end{split}
\]
We let $A_{k,n} = \A_\ve \cap \pi^{-1}E^\pm_k \cap \{R\geq n\}$.  On the one hand,
since $R>n$ on $A_{k,n}$, we have $m(A_{k,n}) \leq C\theta^n$;
on the other hand, $m(A_{k,n}) \leq e^{-k}$ by definition of the partition.  Choose
$0 < \gamma < \ell_c$ and write
$m(A_{k,n}) = m(A_{k,n})^\gamma m(A_{k,n})^{1-\gamma}$.
Then
\begin{equation}
\label{eq:R>n}
\lambda^{-n} \sum_{Z \in \A_\ve:R\geq n} |J\pi_{\ell,j}|_\infty m(Z)
\leq C \lambda^{-n} \sum_{k \geq k_\ve} e^{k(1-\ell_c)} \theta^{n\gamma}
e^{-k(1-\gamma)}
\leq C' (\lambda^{-1}\theta^\gamma)^n e^{-k_\ve(\ell_c - \gamma)}
\end{equation}
Putting together \eqref{eq:jpi bound}, \eqref{eq:lambda bound} and \eqref{eq:R>n} we see
that \eqref{eq:int est} becomes
\begin{equation}
\label{eq:first conv}
\lambda^{-n} \int \Lp^n f_\ve \, dm
\leq C e^{-k_\ve(1 - p(1-\ell_c))/p} (\lambda^{-1}\theta^{(p-1)/p})^{\log k_\ve}
+ C'  (\lambda^{-1}\theta^\gamma)^n e^{-k_\ve(\ell_c - \gamma)}  .
\end{equation}
When the holes are sufficiently small, i.e., when
$\lambda^{-1} \geq \max \{ \theta^\gamma, \theta^{(p-1)/p} \}$, this
quantity can be made arbitrarily small independently of $n$.

\medskip
\noindent
{\em Step 6. Exponential rate of convergence.}  We show that the
convergence of $\lambda^{-n} \Lp^n \tf$ established in Step 5
occurs at an exponential rate.  Since $|\Lp^n \tf|_{L^1(\tm)} = |\Lp^n f|_{L^1(m)}$,
it suffices to show this convergence for the lift on $\Delta$.

Let $\ve = e^{-tn}$ for some small constant $t$ to be chosen later.
Define $\Na_\ve$ as above and notice that outside of $\Na_\ve$,
the $C^2$ norm of $\T^\ell$ for $\ell =1, \ldots, N$ is proportional to
$e^{-k_\ve(\ell_c^* -2)}$ where $\ell_c^{\mi}>0$ is the minimum of the critical
orders of $c \in \Crit_s$.  Since $k_\ve$ is on the order of $- \log \ve$,
we have $|\T^\ell|_{\I \backslash \Na_\ve}|_{C^2} = \mathcal O(e^{tn(2- \ell_c^{\mi})})$.
Let $\Z_J = \cup_{(\ell,j) \in J} \zlj$ and let $\Z_{J,\ve} \subset \Z_J$ denote those
elements which project into $\Na_\ve$.  Then
\begin{equation}
\label{eq:norm growth}
\|J\pi|_{\Z_J\backslash \Z_{J,\ve}}\|_0 \leq C e^{tn(2-\ell^{\mi}_c)} .
\end{equation}
Define $\tf_0$, $f_0$, $\tf_\ve$, $f_\ve$ as in Step 5.
By Lemma~\ref{lem:lift}, \eqref{eq:norm growth} implies
that $\| f_0 \| \leq C e^{tn(2-\ell^{\mi}_c)}$ so that by Corollary~\ref{cor-exp},
\begin{equation}
\label{eq:0 part}
|\lambda^{-n} \Lp^n f_0 - c(f_0) \vf |_{L^1(m)}
\leq \| \lambda^{-n} \Lp^n f_0 - c(f_0) \vf \| \leq C e^{tn(2-\ell^{\mi}_c)} \sigma^n
\end{equation}

Next, when the holes are sufficiently small,
$\lambda^{-1} \geq \max \{ \theta^\gamma, \theta^{(p-1)/p} \}$, so
\eqref{eq:first conv} yields,
\begin{equation}
\label{eq:ve part}
\lambda^{-n} |\Lp^n f_\ve|_1 \leq C e^{-tn(1-p(1-\ell^{\mi}_c))/p} +
C'e^{-tn(\ell^{\mi}_c - \gamma)} \leq C'' e^{-tn \gamma'}
\end{equation}
for some $\gamma'>0$.  In particular, we see from \eqref{eq:ve part} that
the constants $c(f_0)$ converge to $c(f)$ exponentially fast as well.
\[
|c(f) - c(f_0)| = \lim_{n\to \infty} \lambda^{-n} (|\Lp^nf|_1 - |\Lp^nf_0|_1)
= \lim_{n\to\infty} \lambda^{-n} \int \Lp^n f_\ve \, dm \leq C'' e^{-tn \gamma'}
\]
This estimate together with \eqref{eq:0 part} and \eqref{eq:ve part} imply that
$\lambda^{-n} \Lp f \to c(f) \vf$ exponentially fast once we choose $t < -\log
\sigma/(2-\ell^{\mi}_c)$.

\medskip
\noindent
{\em  Step 7.  Unified escape rate.}
By Step 5, for each $\tf \in \G$, we have $f \in L^1(m)$ such that
$\ppi f = \tf$ and $\lambda^{-n} \Lp_F^n f = c(f) \vf$ for some $c(f)>0$
which implies
$\lambda^{-n} \Lp_T^n \tf = c(f) \tp$ by \eqref{eq:conv in X}.
 Letting $\teta = \tf \tm$, we have
\[
\lim_{n\to \infty} \frac{1}{n} \log \teta(I^n) =
\lim_{n\to \infty} \frac{1}{n} \log |\Lp_T^n \tf|_1 = \log \lambda  .
\]
\end{proof}


\subsubsection{Small hole limit}
\label{small hole}

\begin{proof}[Proof of Theorem~\ref{thm:small hole}]
By \eqref{eq:bound escape} and \eqref{eq:free escape},
the quantity $q := \sum_{\ell \geq 1} m(H_\ell) \beta^{-(\ell-1)}$ can be made arbitrarily
small by choosing $h$ to be small.  By Proposition~\ref{prop:regularity},
the escape rate $\lambda$
is controlled by the size of $q$ so that $\lambda \to 1$ as $q \to 0$.  Thus
$\lambda_h \to 1$ as $h \to 0$.

Since $\tmu_h$ is a sequence of probability measures on the compact interval $\I$,
it follows that a subsequence, $\{ \tmu_k \}$ corresponding to $h_k$,
converges weakly to a probability measure
$\tmu_\infty$.  We show that $\tmu_\infty$ is an absolutely continuous invariant
measure for $\T$.  Since there is only one such measure, this will imply that in
fact the entire sequence converges to this same invariant measure.

\medskip
\noindent
{\em Step 1.  $\tmu_\infty$ is absolutely continuous with respect to Lebesgue.}
For each $\tH^{(k)}$, we have two towers:  $(\F^{(k)}, \hDelta^{(k)})$ which has no holes
but is constructed using $\partial \tH^{(k)}$ as artificial cuts as described in the proof
of Theorem~\ref{thm:CE convergence}; and $(F^{(k)}, \Delta^{(k)})$, the tower with
holes obtained from $\hDelta^{(k)}$.
By Lemma~\ref{lemma:uniform decay}, there exist uniform constants
$C > 0$, $\theta < 1$ such that $m(\hDelta^{(k)}_\ell) \leq C\theta^\ell$.

We have an invariant density $\rho_k$ on $\hDelta^{(k)}$ and a conditionally invariant
density $\vf_k$ on $\Delta^{(k)}$.
By Proposition~\ref{prop:regularity}, both $\rho_k, \vf_k \in \B_M$ where
$M$ is independent of $k$  (to see the results for $\rho_k$, simply apply
the proposition to the case $H = \emptyset$).  In addition,
by Proposition~\ref{prop:spectral picture}(i), $\rho_k \geq a>0$ and the
constant $a$ is independent of $i$ because the uniform decay
given by Lemma~\ref{lemma:uniform decay} implies that $\hDelta_0^{(k)}$ must
retain some positive minimum measure for all $k$.

Let $\hat \pi_k$ be the projection corresponding to $\hDelta^{(k)}$ and let
$\pi_k = \hat \pi_k|_{\Delta^{(k)}}$.  Letting $\trho$ denote the unique invariant
density for $\T$ and $J\hat\pi_k$ the Jacobian of $\hat\pi_k$ etc., we have for each $k$,
\begin{equation}
\label{eq:inv proj}
\trho(x) = {\mathcal P}_{\hat \pi_k} \rho_k(x) = \sum_{y \in \hat \pi_k^{-1}x} \frac{\rho_k(y)}{J\hat \pi_k(y)}
\; \; \; \; \mbox{and} \; \; \; \;
\tp_k(x) := {\mathcal P}_{\pi_k} \vf_k(x) \sum_{y \in \pi_k^{-1}x} \frac{\vf_k(y)}{J\pi_k(y)}.
\end{equation}
Now for any $\ve>0$, choose $L > 0$ such that
$\sum_{\ell > L} CM\beta^{-\ell} \theta^\ell < \ve$.  Next choose $k_0$ such that
for all $k \geq k_0$, $\lambda_k^{-L} \leq 2$.  Now
for any Borel $A \subset \I$,
\[
\tmu_k (A) = \sum_{\ell \leq L} \mu_k(\Delta_\ell \cap \pi_k^{-1}A)
+ \sum_{\ell > L} \mu_k(\Delta_\ell \cap \pi_k^{-1}A)
=: \tmu_{k,L}(A) + \tmu_{k,+}(A) .
\]
By \eqref{eq:inv proj}, the measure
$\tmu_{k,L}$ has density $\tp_{k,L}$ bounded independently of $k \geq k_0$:
\begin{equation}
\label{eq:L}
\tp_{k,L}(x) = \sum_{y \in \pi_k^{-1}x: \ell(y) \leq L} \frac{\vf_k(y)}{J\pi_k(y)}
\leq \frac{M}{a} \sum_{y \in \hat \pi_k^{-1}x: \ell(y) \leq L} \lambda_k^{-\ell(y)}
\frac{\rho_k(y)}{J\hat \pi_k^{-1}(y)}
\leq \frac{2M \trho(x)}{a}.
\end{equation}
where $\ell(y)$ is the level of $y$ in $\hDelta^{(k)}$.
The remaining measure $\tmu_{k,+}$ has small total mass:
\begin{equation}
\label{eq:+}
\tmu_{k,+}(\I) = \sum_{\ell > L} \tmu_k(\Delta_\ell)
\leq \sum_{\ell > L} M \beta^{-\ell} m(\Delta_\ell)
\leq \sum_{\ell > L} CM \beta^{-\ell} \theta^{\ell} < \ve .
\end{equation}
Putting together \eqref{eq:L} and \eqref{eq:+}, we see that
$\mu_\infty = \mu_{\infty,L} + \mu_{\infty, +}$ where $\mu_{\infty, L}$ has
density bounded by $2M \trho/a$ while $\mu_{\infty, +}$ is possibly singular
with total mass less than $\ve$.  Since this is true for each $\ve >0$, we conclude
that in fact $\mu_\infty$ is absolutely continuous with density bounded
by $2M \trho/a$.

\medskip
\noindent
{\em Step 2.  $\tmu_\infty$ is invariant.}
Let $I_k = \I \backslash \tH^{(k)}$ and as usual, let $I^n_k = \cap_{j = 0}^n \T^{-j}I_k$
and $T_k = \T|_{I^1_k}$.

By Step 1, $\tmu_\infty$ has density bounded by $2M \trho/a$, which is in
$L^1(\tm)$.  Thus $\tmu_\infty$ gives 0 measure to the singularity points of $\T$.
This fact allows us to write, for any continuous function $f$ on $\I$,
\begin{equation}
\label{eq:invariant}
\tmu_\infty(f \circ \T) = \lim_{k \to \infty} \tmu_k (f \circ \T)
= \lim_{k \to \infty} \int_{I^1_k} f \circ T_k \, d\tmu_k
+ \int_{\I \backslash I^1_k} f \circ \T \, d\tmu_k .
\end{equation}
Since $\lambda_k \to 1$, the first term in \eqref{eq:invariant} is equal to
\[
\lim_{k \to \infty} \int_{I_k} f  \, d((T_k)_*\tmu_k)
= \lim_{k \to \infty} \lambda_k \tmu_k (f) = \tmu_\infty(f) .
\]
The second term in \eqref{eq:invariant} is
bounded by
$|f|_\infty \, \tmu_k (\I \backslash I^1_k)$.  This quantity tends to 0 as
$k \to \infty$ because of the uniform bounds on the densities of $\tmu_k$
obtained in Step 1.
\end{proof}


\section{Equilibrium Principle}
\label{equilibrium}

In this section we consider the invariant measures $\nu$ and $\tnu = \pi_*\nu$
and prove Theorems \ref{thm:F-equilibrium},
\ref{thm:inverse limit} and \ref{thm:T-equilibrium}.
We assume throughout that $F$ is mixing and satisfies (P1)-(P3) and (H1).


\subsection{Characterization of $\tnu$}
\label{inverse limit}

\begin{proof}[Proof of Theorem~\ref{thm:inverse limit}]
Let $\tf \in C^{\bar \alpha}(X)$ and
note that $\tf \circ \pi \in \B_0$.  Thus,
\begin{eqnarray*}
\tnu(\tf) & = & \nu(\tf \circ \pi) \; \;
  = \; \; \lim_{n\to \infty} \lambda^{-n}
        \int_{\Delta^n} \tf \circ \pi \, d\mu \\
  & = & \lim_{n\to \infty} \lambda^{-n}
         \int_{\Delta^n} \tf \circ \pi \vf \, dm
  \; \; = \; \; \lim_{n\to \infty} \lambda^{-n} \int_{\pi(\Delta^n)}
              \ppi(\tf \circ \pi \, \vf) \, d\tm \\
  & = & \lim_{n\to \infty} \lambda^{-n} \int_{X^n} \tf \tp \, d\tm
  \; \; = \; \; \lim_{n\to \infty} \lambda^{-n} \int_{X^n} \tf \, d\tmu
\end{eqnarray*}
where in the first line we have used Proposition~\ref{prop:inv measure}.

The ergodicity of $\tnu$ follows from that of $\nu$
and the relation $X^\infty = \pi(\Delta^\infty)$.
If $A \subset X$ is $T$-invariant,
then since $F^{-1}\circ \pi^{-1}(A) = \pi^{-1}\circ T^{-1}(A) = \pi^{-1}(A)$,
we conclude that $\pi^{-1}(A)$ is $F$-invariant.  This implies that
$\tnu(A)$ is 0 or 1.

To prove exponential decay of correlations let $\tf_1, \tf_2 \in C^\alpha(X)$.
Set $f_i = \tf_i \circ \pi$ and
note that
$\int_X \tf_i \, d\tnu = \int_\Delta f_i \circ \pi \, d\nu$ for $i=1,2$.  So
\[
\int_X \tf_1 \, \tf_2 \circ T^n \, d\tnu
= \int_\Delta \tf_1 \circ \pi \, \tf_2 \circ T^n \circ \pi \, d\nu
= \int_\Delta f_1 \, \tf_2 \circ \pi \circ F^n \, d\nu
= \int_\Delta f_1 \, f_2 \circ F^n \, d\nu,
\]
from which exponential decay of correlations follows using
Proposition~\ref{prop:inv measure} and the fact that $f_1, f_2 \in \B_0$.
\end{proof}


\subsection{Equilibrium Principle on the Tower}
\label{tower equilibrium}

First note that since $F$ is mixing,
Property (P3) implies that there exists an $n_0 \in \N$ such that
 $F^n(Z') \supseteq \Delta_0$,
for all $n \geq n_0$ and  $Z' \in \Z^{im}_0$.

Let $\nu_0:= \frac{1}{\nu(\Delta_0)} \nu|_{\Delta_0}$ and define
$S=F^R: \Delta^\infty\cap\Delta_0 \circlearrowleft$.
Let $R_n(x) = \sum_{k=0}^{n-1} R(S^kx)$ be the $n^{\mbox{\scriptsize th}}$
return time
starting at $x$ and let $\M_S$ be the set of $S$-invariant Borel probability
measures on $\Delta^\infty\cap\Delta_0$.

\begin{proposition}
\label{prop:tower equilibrium}
The measure $\nu_0$ is a Gibbs measure for $S$ and $S$ is topologically mixing
on $\Delta^\infty$.  Accordingly,
\[
\sup_{\eta_0 \in \M_S} \left\{ h_{\eta_0}(S)
   + \int_{\Delta_0} \log ((JS)^{-1} \lambda^{-R}) d\eta_0 \right\} = 0 .
\]
and $\nu_0$ is the only nonsingular measure $\eta_0 \in \M_S$ which attains
the supremum.
\end{proposition}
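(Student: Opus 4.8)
The plan is to identify $S=F^R$ on $\Delta^\infty\cap\Delta_0$ as a topologically mixing Gibbs--Markov map (a countable Markov shift with state partition $\Z_0$ and only finitely many ``image'' states, namely the elements of $\Z_0^{im}$), to verify that $\nu_0$ is a Gibbs measure for the potential $\psi:=\log\big((JS)^{-1}\lambda^{-R}\big)=\log g_R-R\log\lambda$, and then to read the variational formula and the uniqueness statement off the Gibbs property. Note that $JS=JF^R=g_R^{-1}$, and by the cocycle identities for $g$ and for $R_n$ one has $e^{S_n\psi(x)}=g_{R_n}(x)\,\lambda^{-R_n(x)}$ for $x$ in an $n$-cylinder of $S$. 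Topological mixing of $S$ on $\Delta^\infty$ is quick: transitivity of $S$ on $\Delta^\infty\cap\Delta_0$ was already shown in the proof of Proposition~\ref{prop:inv measure}, and absence of periodicity follows from the mixing of $F$ (the covering property $\Delta_0\subseteq F^n(Z')$ for all $n\geq N$, $Z'\in\Z_0^{im}$) exactly as periodicity was eliminated in Steps~6--7 of the proof of Proposition~\ref{prop:spectral picture}; it reduces to aperiodicity of the finite directed graph on $\Z_0^{im}$ recording which image each one-return branch lands in.

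The core of the argument is the \emph{Gibbs estimate}: there is $C\ge1$ such that for every $n$ and every $r$-cylinder $W=E_r(x)\subseteq\Delta_0$ of $\Z$ with $r=R_n(x)$ and $F^r(W)=Z'\in\Z_0^{im}$ (these biject with the $n$-cylinders of $S$, and $\nu_0(W)=\nu_0(W\cap\Delta^\infty)$),
\[
C^{-1}\ \le\ \frac{\nu_0(W)}{e^{S_n\psi(x)}}\ \le\ C\qquad(x\in W).
\]
I would prove this through $\nu_0(W)=\nu(W)/\nu(\Delta_0)$ and the chain of comparisons $\nu(W)=\lim_{N\to\infty}\lambda^{-N}\mu(\Delta^N\cap W)\asymp\lambda^{-r}m(W)\asymp g_r(x)\lambda^{-r}=e^{S_n\psi(x)}$. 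The ingredients are: (i) $\chi_W\in\B_0$ (it is $\beta^{-(r-1)}$-Lipschitz on the partition element containing $W$ and vanishes elsewhere), so $\nu(W)=\lim_N\lambda^{-N}\mu(\Delta^N\cap W)$ by Proposition~\ref{prop:inv measure}; (ii) $\Delta^N\cap W\subseteq\Delta_0$ and $\delta\le\vf|_{\Delta_0}\le\delta^{-1}$ by Proposition~\ref{prop:spectral picture}, so $\mu$ and $m$ are comparable there; (iii) bounded distortion of $F^r$ on $W$ via \eqref{eq:distortion} gives $\Delta^N\cap W=(F^r|_W)^{-1}(\Delta^{N-r}\cap Z')$ and hence $m(\Delta^N\cap W)\asymp\frac{m(W)}{m(Z')}\,m(\Delta^{N-r}\cap Z')$ for $N\ge r$, while Corollary~\ref{cor:uniform escape} and its proof (using $\nu(Z')>0$) give $\lambda^{-(N-r)}m(\Delta^{N-r}\cap Z')\to c(\chi_{Z'})\in(0,\infty)$; (iv) $m(Z')$ and $c(\chi_{Z'})$ take only finitely many values because $\Z_0^{im}$ is finite, and $g_r(x)\asymp m(W)/m(Z')$ by \eqref{eq:distortion2} and its lower counterpart; so every comparison constant is uniform in $W$ and $n$. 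For later use I would also record that $\psi$ has summable variations by \eqref{eq:distortion} (since $R$ is constant on $1$-cylinders) and that $\sum_{a\in\Z_0}\sup_a e^{\psi}\lesssim\sum_a m(a)\lambda^{-R(a)}<\infty$ because $\lambda>\beta>\theta$ in (P1), which keeps the pressure finite.

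From the Gibbs estimate the remaining assertions follow. Summing it over the $n$-cylinders inside a fixed $1$-cylinder shows the Gurevich partition functions are bounded above and below uniformly in $n$, so the pressure of $\psi$ equals $0$ and $\nu_0$ is the positive recurrent Ruelle--Perron--Frobenius/Gibbs measure. The inequality $\sup_{\eta_0\in\M_S}\{h_{\eta_0}(S)+\int\psi\,d\eta_0\}\ge0$ is the identity $h_{\nu_0}(S)+\int\psi\,d\nu_0=0$, obtained from the Gibbs estimate by the Shannon--McMillan--Breiman theorem ($-\frac1n\log\nu_0(W_n(x))\to h_{\nu_0}(S)$, $\Z_0$ being generating) and Birkhoff's theorem for $\frac1n S_n\psi$, using $\int R\,d\nu_0<\infty$ (Kac's formula). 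For the reverse inequality, given $\eta_0\in\M_S$ one uses $H_{\eta_0}\big(\bigvee_{k<n}S^{-k}\Z_0\big)\le-\sum_W\eta_0(W)\log\nu_0(W)\le\log C-\sum_W\eta_0(W)S_n\psi(x_W)$ together with the \emph{bounded} variation of $S_n\psi$ on $n$-cylinders (summable variations of $\psi$) to replace $S_n\psi(x_W)$ by $\eta_0(W)^{-1}\int_W S_n\psi\,d\eta_0$ up to an $O(1)$ error; dividing by $n$ and letting $n\to\infty$ yields $h_{\eta_0}(S)+\int\psi\,d\eta_0\le0$. Uniqueness of the maximiser is the standard equilibrium-state uniqueness for a locally H\"older potential of finite pressure on a topologically mixing countable Markov shift with the big-images property: equality forces $\eta_0$ to satisfy the same Gibbs bound, hence $\eta_0\ll\nu_0$ with density bounded above and below, hence $\eta_0=\nu_0$ by ergodicity; the nonsingularity hypothesis is what keeps Rokhlin's formula $h_{\eta_0}(S)=\int\log J_{\eta_0}S\,d\eta_0$ available and rules out the degenerate competitors on which $h_{\eta_0}(S)+\int\psi\,d\eta_0$ would be an indeterminate $\infty-\infty$.

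The step I expect to be the main obstacle is the Gibbs estimate: making every comparison constant genuinely independent of the cylinder $W$ and of $n$ (this is precisely where finiteness of $\Z_0^{im}$, property (P3), is essential) and passing cleanly to the limit $\lambda^{-N}\mu(\Delta^N\cap W)\to\nu(W)$, which the observation $\chi_W\in\B_0$ handles. A secondary nuisance is the $\pm\infty$ bookkeeping in the variational principle for invariant measures of infinite mean return time, which the nonsingularity hypothesis absorbs.
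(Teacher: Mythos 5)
Your overall strategy is correct, and it agrees with the paper's plan at the top level (identify $\nu_0$ as a Gibbs measure for $\psi=\log((JS)^{-1}\lambda^{-R})$, get topological mixing of $S$ from mixing of $F$ and (P3), and then deduce the variational principle and uniqueness), but both main steps are executed by a genuinely different route. For the Gibbs estimate the paper works entirely at the level of the transfer operator: it proves a uniform lower bound $\lambda^{-k}\vf^{-1}\Lp^k(\vf\chi_0)\geq\nu(\Delta_0)/2$ on $\Delta_0$ (Lemma~\ref{lem:preimages}), and then, in Lemma~\ref{lem:nu bounds}, estimates $\Lp^k(\vf\chi_W)$ directly by peeling off the first $R_n$ iterates for the upper bound and $R_n+n_0$ iterates (using mixing so that $F^{R_n+n_0}W\supseteq\Delta_0$) for the lower bound, reducing to Lemma~\ref{lem:preimages}. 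You instead argue at the level of the measure of sets, using the characterization $\nu(W)=\lim_N\lambda^{-N}\mu(\Delta^N\cap W)$ (valid since $\chi_W\in\B_0$), bounded distortion of $F^{R_n}|_W$, and Corollary~\ref{cor:uniform escape}; this is shorter where it works and pushes the whole weight onto Corollary~\ref{cor:uniform escape}. For the variational principle and uniqueness the paper simply invokes Sarig's Theorems 3, 7 and 8, whereas you sketch a self-contained SMB/Birkhoff/subadditivity argument; both are fine, and your remark that nonsingularity is what rules out the $\infty-\infty$ degeneracy is a useful clarification that the paper leaves implicit.

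There is, however, one genuine gap in your Gibbs lower bound: you apply Corollary~\ref{cor:uniform escape} to $Z'=F^{R_n}W\in\Z_0^{im}$ and need $c(\chi_{Z'})>0$, which by Proposition~\ref{prop:convergence criterion} is equivalent to $\nu(Z')>0$, but $\nu(Z')>0$ is never established for each individual $Z'\in\Z_0^{im}$. From $\nu$-invariance and $\nu(\Delta)=1$ one gets $\nu(\Delta_0)>0$ (levels have non-increasing $\nu$-mass), hence \emph{some} $Z'$ has positive $\nu$-mass, but not obviously all of them, and using the (not-yet-proved) Gibbs property to conclude $\nu(Z')>0$ would be circular. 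The paper avoids this precisely by pushing forward $R_n+n_0$ steps rather than $R_n$, so that the image covers the whole of $\Delta_0$ and only $\nu(\Delta_0)>0$ is needed. You can patch your argument in the same spirit: replace $F^{R_n}|_W\to Z'$ by the countable decomposition of $W\cap\Delta^{R_n+n_0}$ into cylinders $W_i$ on which $F^{R_n+n_0}$ is injective, with $\bigcup_i F^{R_n+n_0}W_i\supseteq\Delta_0$; bounded distortion and (P3) then give $m(\Delta^N\cap W)\gtrsim\frac{m(W)}{m(Z')}\,m(\Delta^{N-R_n-n_0}\cap\Delta_0)$ up to a $Z'$- and $n$-independent constant, and $\lim_N\lambda^{-(N-R_n-n_0)}m(\Delta^{N-R_n-n_0}\cap\Delta_0)=c(\chi_{\Delta_0})>0$ closes the estimate. (Alternatively, one can first derive $\nu(Z')>0$ for every $Z'\in\Z_0^{im}$ from mixing: if $\nu(Z')=0$ then $\lambda^{-n}m(\Delta^n\cap Z')\to0$, yet $Z'$ contains a cylinder mapping under $F^{n_0}$ onto a $Z''$ with $\nu(Z'')>0$, forcing $\liminf_n\lambda^{-n}m(\Delta^n\cap Z')>0$, a contradiction.) With this repair your proof is correct.
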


We first prove the following two lemmas.

\begin{lemma}
\label{lem:preimages}
Let $\chi_0$ be the indicator function for $\Delta_0$.
There exists a $k_0 \in \N$ such that for all $k \geq k_0$ and all $x \in \Delta_0$,
\[
\lambda^{-k}\vf^{-1}(x) \Lp^k(\vf \chi_0)(x) \geq \nu(\chi_0)/2.
\]
\end{lemma}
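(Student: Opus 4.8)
The plan is to exploit the convergence $\lambda^{-k}\vf^{-1}\Lp^k(\vf\chi_0)\to\nu(\chi_0)\cdot 1$ that follows from the spectral picture, together with the uniform lower bound on the density $\vf$. First I would recall from the proof of Proposition~\ref{prop:inv measure} (specifically equation~\eqref{eq:linear functional}) that for $f=\chi_0\in\B_0$ we have
\[
\lambda^{-k}\vf^{-1}\Lp^k(\vf\chi_0)\ \longrightarrow\ c(\vf\chi_0)\cdot 1=\ff(\chi_0)\cdot 1 = \nu(\chi_0)\cdot 1
\]
where the convergence is in $\|\cdot\|$-norm (hence uniform on each level $\Delta_\ell$, in particular on $\Delta_0$). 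Indeed $\vf\chi_0\in\B$ since $\vf|_{\Delta_\ell}\sim\lambda^{-\ell}$ with $\lambda>\beta$ and $\chi_0$ is supported on $\Delta_0$, so Corollary~\ref{cor-exp} applies and gives $\|\lambda^{-k}\Lp^k(\vf\chi_0)-c(\vf\chi_0)\vf\|\le C\|\vf\chi_0\|\sigma^k$. Dividing by $\vf$ on $\Delta_0$, where $\delta\le\vf\le\delta^{-1}$ by Proposition~\ref{prop:spectral picture}(i), converts this into a uniform estimate: $\sup_{x\in\Delta_0}\bigl|\lambda^{-k}\vf^{-1}(x)\Lp^k(\vf\chi_0)(x)-\nu(\chi_0)\bigr|\le \delta^{-1}C\|\vf\chi_0\|\sigma^k$.

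Then I would simply choose $k_0$ so large that $\delta^{-1}C\|\vf\chi_0\|\sigma^{k_0}\le\nu(\chi_0)/2$; since $\sigma<1$ this holds for all $k\ge k_0$, and then for every $x\in\Delta_0$,
\[
\lambda^{-k}\vf^{-1}(x)\Lp^k(\vf\chi_0)(x)\ \ge\ \nu(\chi_0)-\nu(\chi_0)/2\ =\ \nu(\chi_0)/2,
\]
which is exactly the claimed bound. Note $\nu(\chi_0)=\nu(\Delta_0)>0$ since $\nu$ is an $F$-invariant probability measure whose support $\Delta^\infty$ meets $\Delta_0$ (any orbit in $\Delta^\infty$ returns to the base), so the right-hand side is a genuine positive constant.

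The only mild subtlety — and the step I would be most careful about — is the passage from the $\B$-norm convergence of $\lambda^{-k}\Lp^k(\vf\chi_0)$ to a \emph{uniform} lower bound for the \emph{normalized} quantity $\vf^{-1}\Lp^k(\vf\chi_0)$ on $\Delta_0$: this requires both the upper bound $\|\lambda^{-k}\Lp^k(\vf\chi_0)-c(\vf\chi_0)\vf\|\le C\|\vf\chi_0\|\sigma^k$ from Corollary~\ref{cor-exp} and the two-sided bound $\delta\le\vf|_{\Delta_0}\le\delta^{-1}$ from Proposition~\ref{prop:spectral picture}(i), used to control $\vf^{-1}$ from above. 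Once these are in place the argument is immediate and no further combinatorics on the tower is needed; the lemma is essentially a quantitative restatement of the fact that $\chi_0$ (like $\mathbf 1$) has strictly positive $\nu$-mass and hence converges, after renormalization, to $\vf$.
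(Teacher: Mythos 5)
Your proof is correct and follows essentially the same route as the paper's: use the characterization $\nu(\chi_0)=\ff(\chi_0)=c(\vf\chi_0)$, invoke Corollary~\ref{cor-exp} to get $\|\cdot\|$-norm (hence uniform on $\Delta_0$) convergence of $\lambda^{-k}\Lp^k(\vf\chi_0)$ to $c(\vf\chi_0)\vf$, divide by $\vf$ using the two-sided bound $\delta\le\vf|_{\Delta_0}\le\delta^{-1}$, and take $k_0$ large. If anything you are slightly more careful than the paper, which writes $\Lp^k(\chi_0)$ and $c(\chi_0)$ in its opening sentence where $\Lp^k(\vf\chi_0)$ and $c(\vf\chi_0)$ are what is actually needed.
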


\begin{proof}  Note that $\chi_0 \in \B_0$ so that $\lambda^{-k}\Lp^k(\chi_0) \rightarrow c(\chi_0) \vf$ in
the $\|\cdot\|$-norm.  This means that the functions converge pointwise
uniformly on each level of the tower.  Thus
\[
0 < \nu(\Delta_0) = \lim_{k\rightarrow \infty} \lambda^{-k} \vf^{-1}(x) \Lp^k(\chi_0 \vf)(x)
\]
uniformly for $x \in \Delta_0$.  The uniform convergence implies the existence of the desired $k_0$.
\end{proof}

\medskip
\noindent
The next lemma establishes the Gibbs property for $\nu_0$.
\begin{lemma}
\label{lem:nu bounds}
Let $[i_0, i_1, \ldots, i_{n-1}] \subset \Delta_0$
denote a cylinder set of length $n$ with respect to $S$.  Then there exists
a constant $C>0$ such that for all $n$,
\[
C^{-1} \lambda^{-R_n(y_*)} (JS^n(y_*))^{-1} \leq \nu([i_0, i_1, \ldots, i_{n-1}])
\leq C \lambda^{-R_n(y_*)} (JS^n(y_*))^{-1}
\]
where $y_*$ is an arbitrary point in  $[i_0, i_1, \ldots, i_{n-1}]$ and
$JS^n$ is the Jacobian of $S^n$ with respect to $m$.
\end{lemma}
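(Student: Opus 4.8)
The plan is to prove this by ``pulling back'' the characterization $\nu(\cdot)=\lim_k\lambda^{-k}\int_{\Delta^k}(\cdot)\,d\mu$ from Proposition~\ref{prop:inv measure} along the branch of $S^n$ defined on the cylinder, and then absorbing the resulting boundary term into a uniform constant via Corollary~\ref{cor:uniform escape}. Write $\omega:=[i_0,\dots,i_{n-1}]\subset\Delta^\infty\cap\Delta_0$ and $R_n:=R_n(y_*)$, which is constant on $\omega$. First I would record the structural facts about $\omega$: (a) $F^{R_n}|_\omega=S^n|_\omega$ is injective; (b) since $\omega\subset\Delta^\infty$, the first $R_n$ iterates of any point of $\omega$ avoid the hole, so for every $k\ge R_n$ the map $S^n$ carries $\Delta^k\cap\omega$ bijectively onto $\Delta^{k-R_n}\cap S^n(\omega)$; (c) by the Markov property $F^{R(x)}(\Z_0(x))=\bigcup_{j}Z_j$ of the tower one has $S^{m}(\omega)=i_{m}$ in its entirety for each $m<n$, hence $S^n(\omega)=F^{R(i_{n-1})}(i_{n-1})$ is a union of elements of the \emph{finite} image partition $\Z_0^{im}$; and (d) any two points of $\omega$ have separation time at least $R_n$.

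Next, since $\chi_\omega\in\B_0$, Proposition~\ref{prop:inv measure} gives $\nu(\omega)=\lim_k\lambda^{-k}\int_{\Delta^k\cap\omega}\vf\,dm$, and because $\omega\subset\Delta_0$ and $\delta\le\vf|_{\Delta_0}\le\delta^{-1}$ by Proposition~\ref{prop:spectral picture}(i), the ratio of $\nu(\omega)$ to $c(\chi_\omega):=\lim_k\lambda^{-k}m(\Delta^k\cap\omega)$ lies in $[\delta,\delta^{-1}]$; the latter limit exists by Corollary~\ref{cor-exp}. Now for $k\ge R_n$ I change variables by $S^n$ on $\omega$, using (b):
\[
m(\Delta^k\cap\omega)=\int_{\Delta^{k-R_n}\cap S^n(\omega)}\frac{dm(z)}{JS^n\bigl((S^n|_\omega)^{-1}z\bigr)} .
\]
By (d) and the distortion estimate~\eqref{eq:distortion} (with $d(\cdot,\cdot)\le1$), $JS^n(x)/JS^n(y)\in[(1+C_1)^{-1},1+C_1]$ for all $x,y\in\omega$, so $m(\Delta^k\cap\omega)\in(1+C_1)^{\pm1}(JS^n(y_*))^{-1}m(\Delta^{k-R_n}\cap S^n(\omega))$. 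Multiplying by $\lambda^{-k}=\lambda^{-R_n}\lambda^{-(k-R_n)}$ and letting $k\to\infty$ yields
\[
c(\chi_\omega)\in(1+C_1)^{\pm1}\,\lambda^{-R_n}(JS^n(y_*))^{-1}\,c(\chi_{S^n(\omega)}) .
\]

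Finally, by (c) the set $S^n(\omega)$ is a union of elements of the finite partition $\Z_0^{im}$, and $\nu$ assigns positive mass to each such element (by mixing and ergodicity of $\nu$; this is exactly how Corollary~\ref{cor:uniform escape} is invoked elsewhere, e.g.\ in Corollary~\ref{cor:uniform cyl}), so Corollary~\ref{cor:uniform escape} produces a constant $C_2\ge1$, independent of $n$ and of $\omega$, with $c(\chi_{S^n(\omega)})\in[C_2^{-1},C_2]$ (the upper bound being in any case immediate from $|\Lp^kf|_1\le C\lambda^k\|f\|$). Combining the displayed inclusions with the comparison $\nu(\omega)\in[\delta,\delta^{-1}]\,c(\chi_\omega)$ gives the claim with $C=\delta^{-1}(1+C_1)C_2$. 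The only real subtlety is step (c): one must check carefully from the Markov structure that the successive images $S^m(\omega)$ are \emph{full} partition elements, so that $S^n(\omega)$ is genuinely a union of image elements and Corollary~\ref{cor:uniform escape} applies with a constant not depending on $n$; the rest is bounded distortion together with the already-established convergence $\lambda^{-k}\Lp^k\chi_A\to c(\chi_A)\vf$.
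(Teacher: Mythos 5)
Your proof is essentially correct and takes a genuinely different route from the paper's. The paper argues \emph{pointwise}: it expands $\Lp^k(\vf\chi_A)(x)$ for $x\in\Delta_0$ as a sum over preimages, splits the branch at time $R_n(A)$, uses distortion to pull out $g_{R_n}(y_*)$, and then for the lower bound invokes mixing in the form $F^{R_n+n_0}(A)\supseteq\Delta_0$ together with Lemma~\ref{lem:preimages}. You instead work with the \emph{integral} quantity $\lambda^{-k}m(\Delta^k\cap\omega)$, perform a change of variables by $S^n|_\omega$, and close the argument via Corollary~\ref{cor:uniform escape} applied to $S^n(\omega)$. Your route is cleaner in that it reuses an already proved corollary rather than Lemma~\ref{lem:preimages}, and the role of the Markov/finite-images structure is more transparent.

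Two points deserve attention. First, a small imprecision: it is not true that $S^m(\omega)=i_m$ for every $m<n$; one has $S^m(\omega)=[i_m,\ldots,i_{n-1}]\subsetneq i_m$ for $m<n-1$, and only $S^{n-1}(\omega)=i_{n-1}$. This is the only fact you actually use, and then $S^n(\omega)=F^{R(i_{n-1})}(i_{n-1})$ is indeed a union of elements of $\Z_0^{im}$, so the conclusion stands. Second, and more substantively: your invocation of Corollary~\ref{cor:uniform escape} requires $\nu(Z')>0$ for each $Z'\in\Z_0^{im}$. This is true, but it is not established anywhere in the paper prior to Lemma~\ref{lem:nu bounds} itself --- the paper's own proof of the lemma does not need it (it relies only on $\nu(\Delta_0)>0$ via Lemma~\ref{lem:preimages}), while Corollary~\ref{cor:uniform cyl} already quietly assumes it. To make your argument self-contained one should supply a short proof that $\nu(Z')>0$, e.g.\ by the finite-$W$ device from the proof of Lemma~\ref{lem:positive}: mixing gives $F^{n_0}(Z')\supseteq\Delta_0$, so one can select finitely many $\Z$-elements through which the relevant preimage branch passes, obtaining $\Lp^{n_0}(\vf\chi_{Z'})\ge c\,\chi_{\Delta_0}$ with $c>0$, and then apply $\Lp^{k-n_0}$ and Lemma~\ref{lem:preimages}. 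With that patch, your approach is a valid alternative proof of the lemma.
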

\begin{proof}
Let $\chi_A$ be the indicator function of  $A:=[i_0, i_1, \ldots, i_{n-1}]$.
Although $\chi_A \notin \B$, we do have $\Lp^k\chi_A \in \B$ for $k \geq n$ since 1-cylinders are in $\B$.
Thus $\nu(\chi_A)$ is characterized by the limit
$\nu(\chi_A) = \lim_k \lambda^{-k} \vf^{-1} \Lp^k(\vf \chi_A)$.
Since this convergence is in the $\| \cdot \|$-norm,
it is uniform for $x \in \Delta_0$.

For $x \in \Delta_0$ and $k \geq R_n(A)$,
\begin{equation}
\label{eq:splitting}
\begin{split}
\Lp^k(\vf \chi_A)(x) & =  \sum_{F^ky = x} \vf(y) \chi_A(y)g_k(y) \\
   & =  \sum_{y \in A, F^ky=x} \vf(y) g_{k-R_n(A)}(F^{R_n}y) g_{R_n}(y) \\
   & =  \sum_{z \in F^{R_n}(A), F^{k-R_n(A)}z = x}
            \vf(y) g_{k-R_n(A)}(z) g_{R_n}(y),
\end{split}
\end{equation}
where in the last line we have used the fact that $F^{R_n(A)}|_A$ is injective.
Note that by \eqref{eq:distortion}, we may replace $g_{R_n}(y)$
by $g_{R_n}(y_*)$ where $y_* \in A$ is an arbitrary point.
Also, since both $y$ and
$F^{R_n}y$ are in $\Delta_0$
and $\delta \leq \vf \leq \delta^{-1}$ on $\Delta_0$, we may estimate \eqref{eq:splitting} by
\begin{equation}
\label{eq:top bound}
\begin{split}
\Lp^k(\vf \chi_A)(x) & \leq C g_{R_n}(y_*) \sum_{z \in F^{R_n}(A), F^{k-R_n(A)}z = x}
                              \vf(z) g_{k-R_n(A)}(z)  \\
             & \leq C g_{R_n}(y_*) \sum_{F^{k-R_n(A)}z = x}\vf(z) g_{k-R_n(A)}(z) \\
             & = C g_{R_n}(y_*) \Lp^{k-R_n(A)}\vf(x)
             \; = \;  C g_{R_n}(y_*) \lambda^{k-R_n(A)}\vf(x).
\end{split}
\end{equation}
Combining this estimate with the definition of $\nu$ and noticing that
$g_{R_n} = (JS^n)^{-1}$,
we have the upper bound,
\[
\nu(A) \leq C (JS^n(y_*))^{-1} \lambda^{-R_n(A)}.
\]

To obtain the lower bound, we again work from equation \eqref{eq:splitting}
and choose $k \geq R_n(A) + n_0 + k_0$.
\begin{equation}
\begin{split}
\Lp^k(\vf \chi_A)(x) & =  \sum_{y \in A, F^ky=x} \vf(y) g_{k-R_n(A)-n_0}(F^{R_n+n_0}y) g_{n_0}(F^{R_n}y) g_{R_n}(y)  \\
   & \geq  \sum_{\substack{z \in F^{R_n+n_0}(A) \cap \Delta_0 \\ F^{k-R_n(A)-n_0}z = x}}
            \vf(y) g_{k-R_n(A)-n_0}(z) g_{n_0}(F^{R_n}y) g_{R_n}(y).
\end{split}
\end{equation}
We again replace $g_{R_n}(y)$ by $g_{R_n}(y_*)$ using \eqref{eq:distortion}. Note also that
$g_{n_0}|_{\Delta_0}$ is bounded below and that $F^{R_n(A)}y \in \Delta_0$.
Since we are only considering $y, z \in \Delta_0$, we know that $\vf(y)$ is proportional to $\vf(z)$.
Thus
\begin{equation}
\label{eq:lower bound}
\begin{split}
\Lp^k(\vf \chi_A)(x) & \geq  C g_{R_n}(y_*) \sum_{\substack{z \in F^{R_n+n_0}(A) \cap \Delta_0 \\ F^{k-R_n(A)-n_0}z = x}}
            \vf(z) g_{k-R_n(A)-n_0}(z) \\
            & = C g_{R_n}(y_*) \sum_{F^{k-R_n(A)-n_0}z=x} \chi_0(z) \vf(z) g_{k-R_n(A)-n_0}(z) \\
            & = C g_{R_n}(y_*) \Lp^{k-R_n(A)-n_0}(\chi_0 \vf)(x).
\end{split}
\end{equation}
where in the second to last line we have used the fact that $F^{R_n+n_0}(A) \supseteq \Delta_0$.
Combining equation~\eqref{eq:lower bound} with Lemma~\ref{lem:preimages}, since $k-R_n(A)-n_0 \geq k_0$, we estimate
\[
 \Lp^k(\vf \chi_A)(x) \geq C g_{R_n}(y_*) \lambda^{k-R_n(A)-n_0} \vf(x) \frac{\nu(\chi_0)}{2}.
\]
The lower bound follows from the definition of $\nu$.
\end{proof}

\begin{proof}[Proof of Proposition \ref{prop:tower equilibrium}]
Lemma~\ref{lem:nu bounds} implies that $\nu_0$ is a Gibbs measure with
potential $\phi = -\log (\lambda^R JS)$.
We define a topology on $\Delta$ using the cylinder sets with respect
to $\Z$ as our basis.
The fact that $S|_{\Delta_0 \cap \Delta^\infty}$ is topologically mixing
follows immediately from the condition that
$F$ be mixing on elements of $\Z_0^{im}$ together with the finite images
condition (P3).  This can be seen as in the proof of
Proposition~\ref{prop:inv measure}(ii) in Section~\ref{invariant measure}.

The formalism of \cite{sarig} completes the proof of the proposition.
Theorem 3 of \cite{sarig} implies that
\begin{equation}
\label{eq:S-equil}
P_G(\phi) = \sup_{\eta_0 \in \M_S} \left\{ h_{\eta_0}(S)
    + \int_{\Delta_0} \phi \, d\eta_0 \right\}
\end{equation}
where $P_G(\phi) = \sup \{ P_{\mbox{\tiny top}} (\phi|_Y) :
Y \subset \Delta_0 \cap \Delta^\infty, \;
\mbox{top.\ mixing finite Markov shift} \}$
is the Gurevich pressure of $\phi$ for $S$.

Lemma~\ref{lem:nu bounds} of this paper combined with
\cite[Theorems 7 and 8]{sarig}
implies that $P_G(\phi) = 0$ and that the supremum is obtained by our
Gibbs measure $\nu_0$.  In addition, $\nu_0$ is the only nonsingular
$S$-invariant probability measure which attains the supremum.
\end{proof}

We now prove an equilibrium principle for $F$ using the one for $S$.

\begin{lemma}
\label{lemma:lyapunov}
Let $\M_F$ be the set of $F$-invariant Borel probability measures on $\Delta$.
For any $\eta \in \M_F$, let
$\eta_0 = \frac{1}{\eta_0(\Delta_0)} \eta|_{\Delta_0}$.  Then
\[
\int_{\Delta_0} \log JS \, d\eta_0 = \int_\Delta \log JF \, d\eta \int_{\Delta_0} R \,d\eta_0 .
\]
\end{lemma}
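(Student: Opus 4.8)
The plan is to read off the identity from the Abramov--Kac reconstruction of an $F$-invariant measure from its first-return measure on $\Delta_0$, combined with the chain rule for Jacobians. This is an entirely standard computation, so I do not anticipate a genuine obstacle; the only things needing care are the two bookkeeping points flagged at the end.

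First I would record two elementary facts about the tower. Since $S=F^R$ on $\Delta^\infty\cap\Delta_0$, the chain rule gives
\[
\log JS(x)=\sum_{k=0}^{R(x)-1}\log JF(F^kx)\qquad\text{for }x\in\Delta_0 ,
\]
which in the notation of Section~\ref{holes} is just $g_R=g\cdot g\circ F\cdots g\circ F^{R-1}$ with $JS=g_R^{-1}$. Second, because $\F$ acts inside each column by the translation $(x,n)\mapsto(x,n+1)$, which preserves $m$ by the very definition of $m$ on higher levels, we have $JF\equiv1$, hence $\log JF\equiv0$, off the \emph{return set} $\mathcal R:=\{y\in\Delta:Fy\in\Delta_0\}$; and if $y\in\mathcal R$ has base point $y_0:=F^{-\ell(y)}y\in\Delta_0$, then $\log JF(y)=\log JS(y_0)$ since all earlier factors in the product above are $1$.

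Next I would invoke the reconstruction formula. Since $R<\infty$ everywhere on the tower, every orbit in $\Delta^\infty$ visits $\Delta_0$; as any $\eta\in\M_F$ is supported on $\Delta^\infty$ (by $F$-invariance, $\eta(\Delta^n)=1$ for all $n$), $F$-invariance forces $\eta(\Delta_0)>0$, so $\eta_0$ and $S$ are defined $\eta$-a.e. and $\eta_0$ is $S$-invariant. The standard Abramov--Kac correspondence between $F$-invariant measures and their return measures then yields, for every $\psi\in L^1(\eta)$,
\[
\int_\Delta\psi\,d\eta=\eta(\Delta_0)\int_{\Delta_0}\sum_{k=0}^{R(x)-1}\psi(F^kx)\,d\eta_0(x).
\]
Applying this to $\psi\equiv1$ gives Kac's formula $\eta(\Delta_0)^{-1}=\int_{\Delta_0}R\,d\eta_0$, in particular a finite quantity. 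Applying it to $\psi=\log JF$ and using that $\log JF$ is supported on $\mathcal R$ — so that for $x\in\Delta_0$ only the term $k=R(x)-1$ survives in the inner sum, with value $\log JS(x)$ by the first step — gives $\int_\Delta\log JF\,d\eta=\eta(\Delta_0)\int_{\Delta_0}\log JS\,d\eta_0$. Multiplying through by $\eta(\Delta_0)^{-1}=\int_{\Delta_0}R\,d\eta_0$ is exactly the assertion of the lemma.

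The two points requiring care rather than ingenuity are: (i) the reconstruction formula is applied to an \emph{arbitrary} $\eta\in\M_F$, not merely the canonical tower measure, which is legitimate precisely because $R$ is finite on all of $\Delta$ (so recurrence to $\Delta_0$ is automatic) and $\eta(\Delta_0)>0$; and (ii) the collapse of the Birkhoff sum of $\log JF$ to its single return term. Integrability is not an obstacle: by the distortion bound \eqref{eq:distortion2} together with (P3), $JS\approx c_0/m(Z)$ on a cylinder $Z\subset\Delta_0$, so $\log JS$ and $\log JF$ are bounded below; hence both sides of the asserted identity are well-defined in $(-\infty,+\infty]$ and are finite or infinite together.
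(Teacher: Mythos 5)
Your proof is correct and takes essentially the same route as the paper: the chain rule collapse $JS(x)=JF(F^{R(x)-1}x)$ (since $JF\equiv1$ off the return set), Kac's formula $\eta(\Delta_0)^{-1}=\int_{\Delta_0}R\,d\eta_0$, and the change-of-variables pushing $\log JF$ from the return fibre back to $\Delta_0$. You spell out the reconstruction formula in full generality and flag integrability and $\eta(\Delta_0)>0$, which the paper leaves implicit, but the underlying argument is the same.
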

\begin{proof}
Notice that $\eta_0 \in \M_S$.  For $x \in \Delta_0$,
$JS(x) = JF^R(x) = \Pi_{i=0}^{R(x)-1}JF(F^ix)$. But
$JF(F^ix) =1$ for $i<R(x)-1$, so that $JS(x) = JF(F^{R-1}x)$.
In other words, we have
\begin{equation}
\label{eq:first int}
\int_{\Delta_0} \log JS \, d\eta_0 = \eta(\Delta_0)^{-1} \int_{F^{-1}\Delta_0} \log JF \,d\eta
= \nu(\Delta_0)^{-1} \int_\Delta \log JF \,d\eta.
\end{equation}

Since the measure of a partition element $\zlj \in \Z$ does not change as it
moves up the tower, we have
\[
1 = \sum_{(\ell,j)} \eta(\zlj) = \sum_j \eta(Z_{0,j})R(Z_{0,j}) = \int_{\Delta_0} R \, d\eta.
\]
So by definition of $\eta_0$, we have
\[
 \int_{\Delta_0} R \, d\eta_0 = \eta(\Delta_0)^{-1} \int_{\Delta_0} R \, d\eta = \eta(\Delta_0)^{-1}.
\]
This, together with \eqref{eq:first int}, proves the lemma.
\end{proof}

Since $S = F^R$ is a first return map to $\Delta_0$, the general formula
of Abramov \cite{abramov} implies that $h_\eta(F) = h_{\eta_0}(S) \eta(\Delta_0)$ so that
\begin{equation}
\label{eq:entropy}
h_{\eta_0}(S) = \eta(\Delta_0)^{-1} h_\eta(F) = h_\eta(F) \int_{\Delta_0} R \,d\eta_0.
\end{equation}
Since $ \int_{\Delta_0} R \,d\eta_0 = \eta(\Delta_0)^{-1} \neq 0$
and there is a 1-1 correspondence between measures in $\M_S$
and $\M_F$,
putting equation~\eqref{eq:entropy} and
Lemma~\ref{lemma:lyapunov} together with \eqref{eq:S-equil}, we have
\begin{equation}
\label{eq:F-equilibrium}
\log \lambda = \sup_{\eta \in \M_F} \left\{  h_\eta(F)
  - \int_\Delta  \log JF \, d\eta \right\}.
\end{equation}
Moreover, $\nu$ is the only nonsingular $F$-invariant probability measure which
attains the supremum.  This
completes the proof of Theorem~\ref{thm:F-equilibrium}.


\subsection{An Equilibrium Principle for $(T,X)$}
\label{T-equilibrium}

The proof of Theorem~\ref{thm:T-equilibrium} consists simply of
projecting \eqref{eq:F-equilibrium} down to $X$ to get the desired
relation for $T$.

Note that for any $\eta \in \M_F$, we can define
$\teta = \pi_*\eta \in \M_T$.  Then given a
function $\tf$ on $X$, we have
$\int_X \tf \, d\teta = \int_\Delta \tf \circ \pi \, d\eta$.
From the relation $\pi \circ F = T \circ \pi$, we have
\[
J\pi (Fx) JF(x) = JT(\pi x) D\pi(x)
\]
for each $x \in \Delta$.  Thus,
\[
\int_X \log JT \, d\teta = \int_\Delta (\log JF + \log J\pi\circ F - \log J\pi)
 \, d\eta
= \int_\Delta \log JF \, d\eta
\]
since the last two terms cancel by the the $F$-invariance of $\eta$.

The fact that $h_\eta(F) = h_{\teta}(T)$ follows since $\pi$ is at most
countable-to-one (\cite[Proposition 2.8]{buzzi}).  Thus
\[
h_\eta(F) - \int_\Delta \log JF \, d\eta
= h_{\teta}(T) - \int_X \log JT \, d\teta
\]
for each $\eta \in \M_F$, which proves the theorem.


\small

\medskip
\noindent
Department of Mathematics\\
University of Surrey\\
Guildford, Surrey, GU2 7XH\\
UK\\
\texttt{h.bruin@surrey.ac.uk}\\
\texttt{http://personal.maths.surrey.ac.uk/st/H.Bruin/}

\medskip
\noindent
Department of Mathematics and Computer Science\\
Fairfield University\\
Fairfield, CT 06824 \\
USA\\
\texttt{mdemers@fairfield.edu}\\
\texttt{http://cs.fairfield.edu/$\sim$demers/}

\medskip
\noindent
Department of Mathematics\\
University of Surrey\\
Guildford, Surrey, GU2 7XH\\
UK\\
\texttt{ism@math.uh.edu}\\
\texttt{http://personal.maths.surrey.ac.uk/st/I.Melbourne/research.html}

\end{document}